\documentclass[11pt]{amsart}
\usepackage{geometry}    

\usepackage{fullpage}
\usepackage{graphicx}
\usepackage{algorithm}
\usepackage{algorithmic}
\usepackage{booktabs}
\usepackage{hyperref}
\usepackage{enumitem}
\usepackage{wrapfig}
\usepackage{pifont}
\usepackage{amsmath, amsthm, amssymb, amsfonts, mathtools}

\usepackage[english]{babel}

\graphicspath{{Figures/}}

%
%
%
\usepackage[dvipsnames]{xcolor}

\def\R{\mathbf{R}}

\def\1{\mathbf{1}}

\newtheorem{theorem}{Theorem}
\newtheorem{lemma}{Lemma}
\newtheorem{corollary}{Corollary}
\newtheorem{assumption}{Assumption}
\newtheorem{proposition}{Proposition}

\newtheorem{definition}{Definition}

\usepackage{graphicx}
\graphicspath{{Figures/}}


\title[]{Accelerated Convex Optimization with Stochastic Gradients: Generalizing the Strong-Growth Condition}
\author[]{V\' ictor Valls$^{*,\dagger}$, Shiqiang Wang$^+$, Yuang Jiang$^*$, Leandros Tassiulas$^*$\\
$^*$Yale University, $^\dagger$Trinity College Dublin, $^+$IBM T.\ J.\ Watson Research Center}
\thanks{This work has received funding from the European Union’s Horizon 2020 research and innovation programme under the Marie Skłodowska-Curie grant agreement No.\ 795244.}
\begin{document}

\maketitle

\begin{abstract}
This paper presents a sufficient condition for stochastic gradients not to slow down the convergence of Nesterov's accelerated gradient method. The new condition has the strong-growth condition by Schmidt \& Roux as a special case, and it also allows us to (i) model problems with constraints and (ii) design new types of oracles (e.g., oracles for finite-sum problems such as SAGA). Our results are obtained by revisiting Nesterov's accelerated algorithm and are useful for designing stochastic oracles without changing the underlying first-order method. 
\end{abstract}

\section{Introduction}
\label{sec:introduction}

First-order methods (FOMs) for \emph{smooth} convex optimization are central to machine learning and other areas such as control \cite{BNO+14}, networking \cite{PC06}, and signal processing \cite{CBS14}. In short, these methods minimize a convex function  by querying a first-order oracle (i.e., a subroutine that returns a gradient), and their performance is typically measured in terms of oracle calls.  For instance, the classic \emph{gradient method} (GM) obtains an $\epsilon$-approximate solution with $O(1/k)$ gradient computations \cite{Nes04}, whereas Nesterov's \emph{accelerated gradient method}  \cite{Nes83} has a much faster rate of $O(1/k^2)$. The fewer the gradient computations a FOM needs, the better. However, performance in terms of oracle calls is only half of the picture. The \emph{runtime} of a FOM also depends on how fast gradients are computed, which may be slow in practice. For example, the time to compute a gradient in finite-sum problems scales linearly with the dataset size.

Over the last years, several accelerated FOMs have been proposed to reduce the training time of machine learning problems \cite{All17, ZSC18,DES20,LKQ+20,QRZ20}. The approaches replace an \emph{exact} oracle with an \emph{inexact/stochastic} oracle that can estimate gradients faster without degrading the algorithm's convergence rate. For instance, in finite-sum problems, a stochastic oracle can estimate a gradient by selecting a subset of samples uniformly at random   (i.e., a ``mini-batch'') \cite{Bot12,All17}. 
Those methods obtain better performance than their counterparts with exact oracles. However, the algorithms are often tailored to specific settings and, as a result, difficult to extend to solve other problems. Similarly, it is unclear how solutions for existing problems (e.g., for finite-sums and distributed learning) are related despite doing, in essence, the same: replacing an exact oracle with a stochastic oracle. Hence, the question: What conditions must a stochastic oracle satisfy not to slow down the convergence rate of an accelerated method? 

The question above was first addressed by \cite{VBS19}, which showed that the \emph{strong-growth} condition developed by Schmidt \& Roux in \cite{SR13} for stochastic gradient descent (SGD) also works for accelerated methods.\footnote{The condition is known as ``strong-growth'' as it requires that the magnitude of the stochastic gradients is proportional to the magnitude of the exact gradients. } 
However, the {strong-growth} condition is restricted to unconstrained optimization problems and it does not explain variance reduced oracles for finite-sum problems (e.g., SAGA \cite{DBL14} and SVRG \cite{JZ13}), which have attracted the interest of the machine learning community over the last years. As a result, tailored accelerated algorithms have been developed  that couple the first-order method with the subroutine that estimates gradients (e.g., \cite{All17,ZSC18}). 

This paper presents a new sufficient condition for stochastic gradients not to slow down the convergence of Nesterov's accelerated algorithm. The new condition has the strong-growth condition by Schmidt \& Roux \cite{SR13} as a special case, but it allows us to (i) tackle problems with convex constraints and (ii) design variance reduced oracles for problems that where not possible before. For example, finite-sum problems with oracles such as SAGA \cite{DBL14}. 
Our results also bring to the fore algorithm features known by the theoretical optimization community but rarely exploited in machine learning.
In particular, that the accelerated algorithm is the same independently of whether the objective is smooth or smooth and strongly convex, and that when the cost function is strongly convex, it is possible to use a smaller condition number during the first iterations. Hence, obtain a free speed. Our technical results are connected to the works in \cite{Dev13,CDO18}, which study the convergence of accelerated methods with inexact oracles. However, instead of slowing down the learning process to ensure convergence when gradients are inexact, we study the properties that gradients should satisfy to retain the fast convergence rates. 

The rest of the paper is structured as follows. Sec.\ \ref{sec:contributions} presents the main contributions and Sec.\ \ref{sec:unified_fom} the derivation of the technical results. In Sec.\ \ref{sec:applications}, we show how the new variance condition is related to existing applications, which is useful for designing new oracles. Finally, we illustrate the numerical behavior/performance of the new algorithm in Sec.\ \ref{sec:experiments}.


\section{Main Results} 
\label{sec:contributions}

The main contribution of the paper is Theorem \ref{th:main_theorem}, which establishes the convergence of Algorithm~\ref{al:smooth} for minimizing an $L$-smooth and ($\mu$-strongly) convex function with a stochastic oracle. In particular, Algorithm~\ref{al:smooth} obtains optimal convergence rates when the gradient estimates are unbiased and their variance is proportional to the \emph{difference} of the exact gradients. 

\begin{theorem}
\label{th:main_theorem}
Let $C$ be a convex set and $f : C \to \mathbf R$ a ($\mu$-strongly) convex and $L$-smooth function w.r.t.\ $\| \cdot \|$, where $\| \cdot \|$ is the $\ell_2$ norm. Suppose that $L > \mu$ and let $\phi : C \to \mathbf R$ be a non-negative and $\sigma$-strongly convex function. If the estimated gradients $\widetilde \nabla f(x_k)$ satisfy $\mathbf E [\widetilde \nabla f(x_k)] = \nabla f(x_k)$ and
 \begin{align} 
 \mathbf E [ \| \widetilde \nabla f(x_k) - \nabla f(x_k) \|^2 ]  \le \frac{1}{4\lambda } \cdot \frac{A_{k-1}}{A_k}   \| \nabla f(x_k) - \nabla f(x_{k-1}) \|^2,
\label{eq:general_condition_intro}
\end{align}
for all $k\ge1$, Algorithm \ref{al:smooth} with $\lambda \in (0,1]$ ensures 
\begin{align}
f(y_k) - f(y^\star) \le \frac{\phi(y^\star)}{A_k} 
\label{eq:main_convergence_bound}
\end{align}
where $y^\star \in \arg \min_{u \in C} f(u)$ and 
\begin{align}
A_k \ge \frac{\lambda \sigma}{2L}\prod_{i=1}^k \left(1 + \max \left\{ \frac{2}{i}, \sqrt \frac{\lambda \mu}{L} \right\} \right) - \frac{\lambda \sigma}{2L}.
\label{eq:A_k_lowerbound}
\end{align}
\end{theorem}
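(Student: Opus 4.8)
The plan is to establish the two assertions of the theorem separately: the convergence bound \eqref{eq:main_convergence_bound} through an estimate-sequence (Lyapunov) argument, and the explicit lower bound \eqref{eq:A_k_lowerbound} on $A_k$ by solving the parameter recursion built into Algorithm~\ref{al:smooth}.

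For \eqref{eq:main_convergence_bound}, I would introduce an estimate function of the form $\phi_k(x) = \phi(x) + \sum_{i=1}^k a_i\, \ell_i(x)$, where $a_i = A_i - A_{i-1}$ and $\ell_i(x) = f(x_i) + \langle \widetilde\nabla f(x_i), x - x_i\rangle + \tfrac{\mu}{2}\|x - x_i\|^2$ is the (stochastic) strongly-convex lower model of $f$ at $x_i$. The target invariant, carried by induction on $k$, is $A_k\, \mathbf E[f(y_k)] \le \mathbf E\big[\min_{x \in C}\phi_k(x)\big]$. Granting the invariant, evaluating $\phi_k$ at $y^\star$, taking expectations, and using unbiasedness together with convexity so that $\mathbf E[\ell_i(y^\star)] \le f(y^\star)$, the weighted sum collapses to $A_k f(y^\star)$ and leaves $A_k(\mathbf E[f(y_k)] - f(y^\star)) \le \phi(y^\star)$, which is precisely \eqref{eq:main_convergence_bound}.

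The inductive step is the technical heart. I would combine three ingredients: $L$-smoothness to upper bound $f(y_k)$ by a quadratic model around $x_k$; the $\sigma$-strong convexity of $\phi_k$ (inherited from $\phi$, and reinforced by the $\tfrac{\mu}{2}\|\cdot\|^2$ terms when $\mu > 0$) to pass from $\min \phi_{k-1}$ to $\min \phi_k$ through the prox point; and the coupling that defines $x_k$ as a convex combination of $y_{k-1}$ and the previous prox point. The stochastic gradient enters only through $\widetilde\nabla f(x_k)$; conditioning on the past and using $\mathbf E[\widetilde\nabla f(x_k)] = \nabla f(x_k)$ eliminates the first-order error, leaving a residual proportional to $\mathbf E[\|\widetilde\nabla f(x_k) - \nabla f(x_k)\|^2]$. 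The key observation is that the smooth analysis already surfaces a favorable negative term scaling like $-\tfrac{A_{k-1}}{A_k}\|\nabla f(x_k) - \nabla f(x_{k-1})\|^2$ --- it appears naturally when one compares the lower models at consecutive iterates, whose linear parts telescope into a gradient difference --- and condition \eqref{eq:general_condition_intro} is calibrated exactly so that the variance residual is dominated by this term. I expect the careful accounting of constants in this cancellation, in particular reproducing the factor $\tfrac{1}{4\lambda}\cdot\tfrac{A_{k-1}}{A_k}$, to be the main obstacle.

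For \eqref{eq:A_k_lowerbound}, I would read off from Algorithm~\ref{al:smooth} the quadratic recursion tying $A_k$ to $A_{k-1}$ (of the shape $L(A_k - A_{k-1})^2 = \lambda \sigma A_k + \lambda\mu A_k A_{k-1}$), recast it as a multiplicative step $A_k + c = (A_{k-1}+c)(1+\delta_k)$ with shift $c = \tfrac{\lambda\sigma}{2L}$, and lower bound the growth factor by $\delta_k \ge \max\{2/k, \sqrt{\lambda\mu/L}\}$. The two branches match the two regimes: the $2/k$ branch (dominant in early iterations, and the only one present when $\mu = 0$) telescopes through $\sqrt{A_k} - \sqrt{A_{k-1}} \ge \text{const}$ to the $O(k^2)$ acceleration, while the $\sqrt{\lambda\mu/L}$ branch produces geometric growth in the strongly convex case. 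Taking the product over $i = 1,\dots,k$ and subtracting the shift $c$ then yields \eqref{eq:A_k_lowerbound}, and simultaneously explains the ``free'' speedup coming from the smaller effective condition number during the first few iterations.
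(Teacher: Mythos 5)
Your overall architecture does match the paper's: the invariant $A_k\,\mathbf E[f(y_k)] \le \mathbf E[\min_{u\in C}\phi_k(u)]$ is essentially the dual-averaging bound of Lemma~\ref{th:dual_averaging_bound} (which the authors themselves describe as equivalent to Nesterov's estimate sequences), evaluating at $y^\star$ and using unbiasedness is how the constant $\phi(y^\star)$ is extracted, and your two-branch analysis of the step-size recursion $L\alpha_k^2/A_k = \lambda(\mu A_k+\sigma)$ is the same as the paper's proof of \eqref{eq:A_k_lowerbound} (one repair needed: your shifted multiplicative recursion $A_k + c \ge (1+\delta_k)(A_{k-1}+c)$ with $c=\tfrac{\lambda\sigma}{2L}$ fails on the geometric branch, since it would require $c \ge (1+\delta_k)c$; the paper instead drops the shift there, using $A_k \ge (1+\sqrt{\lambda\mu/L})A_{k-1}$ together with the base estimate $A_1 \ge \tfrac{\lambda\sigma}{L} \ge \tfrac{\lambda\sigma}{2L}(1+\sqrt{\lambda\mu/L})$).

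The genuine gap sits exactly where you locate ``the main obstacle,'' and your proposed mechanism for the key negative term is wrong. The term $-\mathrm{const}\cdot A_{k-1}\|\nabla f(x_k)-\nabla f(x_{k-1})\|_*^2$ does \emph{not} appear by telescoping the linear parts of consecutive lower models: that telescoping produces inner products of gradient differences with iterate differences, never a negatively signed squared norm with weight $A_{i-1}/(4L)$. In the paper it takes three separate devices: (i) a strengthened smoothness estimate via the $1/L$-strong convexity of $f^*$, namely $f(x_k) \le f(y_{k-1}) + \langle \nabla f(x_k), x_k - y_{k-1}\rangle - \tfrac{1}{2L}\|\nabla f(x_k)-\nabla f(y_{k-1})\|_*^2$, which yields a negative term in the gradient difference at $x_k$ and $y_{k-1}$ (not $x_{k-1}$); (ii) the deliberate slack in the step-size rule --- because $\alpha_k$ consumes only the fraction $\lambda$ of the prox strong convexity, a leftover $-(1-\lambda)(\mu A_k+\sigma)\|v_k-\hat v_{k-1}\|^2$ survives in \eqref{eq:accelerated_bound}, and via $\|x_k-y_k\| \le \tfrac{\alpha_k}{A_k}\|v_k-\hat v_{k-1}\|$ plus Lipschitzness of $\nabla f$ it converts into $-\tfrac{A_k}{2L}\|\nabla f(x_k)-\nabla f(y_k)\|_*^2$ (Lemma~\ref{th:intermediate_lemma_lm}, which needs $\lambda \le 1/2$); and (iii) pairing consecutive indices with $\tfrac12\|a-b\|^2 \le \|a\|^2+\|b\|^2$ (Lemmas~\ref{th:squared_norm_upper_bound} and \ref{th:compact_negative_term}) to land on $-\tfrac{A_{i-1}}{4L}\|\nabla f(x_i)-\nabla f(x_{i-1})\|_*^2$. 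Equally, your claim that conditioning on the past ``eliminates the first-order error'' is not available as stated: in $N_k$ the noise $\xi_k$ multiplies $A_k y_k - A_{k-1}y_{k-1}-\alpha_k y^\star$, and $y_k$ depends on $v_k$, which is computed from $\widetilde\nabla f(x_k)$ itself, so the pairing is correlated with $\xi_k$. The paper handles this (Lemma~\ref{th:bound_unbiased}) by comparing $v_k$ with the exact-gradient prox point $\hat v_k$ and using the $(\mu A_k+\sigma)^{-1}$-Lipschitz gradient of $\varphi_k^*$ to get $\|v_k-\hat v_k\| \le \tfrac{\alpha_k}{\mu A_k+\sigma}\|\xi_k\|_*$, whence $\mathbf E[N_k] \le \tfrac{\lambda}{L}\sum_i A_i\,\mathbf E\|\xi_i\|_*^2$ using the identity $\tfrac{\alpha_k^2}{\mu A_k+\sigma} = \lambda A_k$. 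The threshold $\tfrac{1}{4\lambda}\cdot\tfrac{A_{k-1}}{A_k}$ in \eqref{eq:general_condition_intro} is precisely the ratio of these two constants; without devices (i)--(iii) and the $v_k$-versus-$\hat v_k$ decoupling, the cancellation your induction relies on cannot be carried out.
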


In brief, Algorithm \ref{al:smooth} minimizes $f$ by constructing {three sequences}. A sequence $\{x_i \in C\}_{i=1}^k$ of search points to query the first-order oracle, a sequence $\{y_i \in C\}_{i=1}^k$ of approximate solutions, and a sequence of weights $\{ \alpha_i \ge 0  \}_{i=1}^k$. Also, note that the (i) algorithm, (ii) convergence bound, and (iii) variance condition in Eq.\ \eqref{eq:general_condition_intro} are the same regardless of whether the objective function is smooth or smooth and strongly convex. Next, we discuss the convergence properties of Algorithm \ref{al:smooth} and the variance condition in Eq.\ \eqref{eq:general_condition_intro}.

\begin{algorithm}[t]
\begin{algorithmic}[1]
\STATE {\bfseries Input:} ($\mu$-strongly) convex and $L$-smooth function $f$; convex set $C$; non-negative and $\sigma$-strongly convex prox-function $\phi$; stopping criterion.
\STATE \textbf{Set:} $v_0 = \arg \min_{u \in C} \phi(u)$; $A_0 = 0$; $y_0 \in C$; $s_0 = 0$;  $k = 0$,  $\lambda \in (0,1]$
\WHILE {stopping criterion is not met}
\STATE \phantom{$(\bullet)$} \hspace{0.15em} $k \leftarrow k + 1$
\STATE {$(\circ)$} \hspace{0.15em}  Select $\alpha_k$ that satisfies $L {\alpha_k^2}A_k^{-1} = \lambda( \mu A_{k}  + \sigma  )$ 
\STATE  \phantom{$(\bullet)$} \hspace{0.15em}  $A_k \leftarrow A_{k-1}  + \alpha_k$
\STATE {$(\diamond)$} \hspace{0.15em} Select $x_k$ as indicated in Eq.\ \eqref{eq:x_select}
\STATE \phantom{$(\bullet)$} \hspace{0.15em} $\widetilde \nabla f(x_k) \leftarrow$ Query first-order oracle at $x_k$ 
\STATE \phantom{$(\bullet)$} \hspace{0.15em} $s_k \leftarrow s_{k-1} - \alpha_k \widetilde \nabla f(x_k)$
\STATE \phantom{$(\bullet)$} \hspace{0.15em} Select $v_k \in \arg\max_{u \in C} \{ \langle s_k, u \rangle - \phi(u) - \frac{\mu}{2} \sum_{i=1}^k {\alpha_i}\| x_i - u \|^2 \} $ \label{eq:proj_vk} 
\STATE {$(\triangleright)$} \hspace{0.15em} Select $y_k \in Y_k$, where $Y_k$ is the convex set defined in Eq. \eqref{eq:setYk}
\ENDWHILE 
\STATE \textbf{return} $y_k$
\caption{}
\label{al:smooth}
\end{algorithmic}
\end{algorithm}

\textbf{Convergence properties and algorithm complexity.} Algorithm \ref{al:smooth} ensures that $f(y_k) - f^\star \le \phi(y^\star) / A_k$, where  $\phi(y^\star)$ is a constant. 
Thus, the algorithm's convergence rate depends on how fast $A_k$ grows---see Eq.\ \eqref{eq:A_k_lowerbound}. When $f$ is just smooth, we have $f(y_k) - f^\star \le O(1/k^2)$ since $\mu = 0$ and $\smash{\prod_{i=1}^k (1 + 2/i) \sim k^2}$. When $f$ is also $\mu$-strongly convex, the algorithm has  $O(1/k^2)$ convergence when $k \le 2 \sqrt {L / \mu }$, and then it ``switches'' to linear rate since $\smash{1/\prod_{i=1}^k (1+ \sqrt{\mu/L} )\sim \exp(-k \sqrt{\mu/L})}$.\footnote{Note that $(1 + a)^{-k} = \exp(-k \log(1+a)) \le \exp(-k a \log(2)) = 2 \exp(-k a)$ for $a \in [0,1]$.} This behavior is due to our  choice of weights $\alpha_k$  instead of sequences that grow at a predetermined rate (e.g., \cite{CDO18,LKQ+20,BBFG21}). The latter is important in practice as a $1/k^2$ convergence is faster during the first phase of the algorithm. Or equivalently, our algorithm can use a smaller ``condition number'' during the first $k \le 2 \sqrt {L /\mu}$ iterations. 
Nesterov discovered such property for unconstrained optimization problems (see, for example, \cite[Sec.\ 2.2.\ and pp.\ 76]{Nes04}), but it has surprisingly not been exploited by machine learning algorithms despite providing a free speed up. We show the importance of this feature in the numerical experiment in Sec.\ \ref{sec:least_squares}.

 The main steps of the algorithm are $(\circ)$, $(\diamond)$, and $(\triangleright)$, all of which can be computed in closed-form (see also discussion in Sec.\ \ref{sec:selecting_sequences}). Step 10 corresponds to a projection or proximal step and requires solving an auxiliary convex problem (the complexity of which depends on the problem structure).\footnote{When $C = \mathbb R^n$ and $\| \cdot \|$ is the $\ell_2$-norm, Step 10 can be computed in closed-form.}

\textbf{Variance condition and practical implications.} The key feature of Algorithm \ref{al:smooth} is that it allows gradients to be stochastic when those are unbiased and their variance satisfies the condition in Eq.\ \eqref{eq:general_condition_intro}. And when the problem is unconstrained, we show in Corollary \ref{th:prop_unconstrained} (Sec.\ \ref{sec:bounding_nk}) that we can obtain the strong-growth condition by Schmidt \& Roux \cite{SR13} as a special case. 

Parameter $\lambda$ allows us to control the robustness of Algorithm \ref{al:smooth} to stochastic gradients. 
In particular, by making $\lambda$ smaller, the algorithm can handle gradient estimates with larger variance. However, higher robustness is not for free.  Observe from Eq.\ \eqref{eq:A_k_lowerbound} that $\lambda$ scales the strong convexity constants of the problem (i.e., $\sigma$ and $\mu$), and so by making $\lambda$ smaller, we are indirectly ``shrinking'' those constants. 
Such insight is crucial. While parameter $\lambda$ does not affect the algorithms' convergence rate, it does affect the constant factors and so  the \emph{total} number of iterations required to obtain an $\epsilon$-approximate solution. Hence, if we use a stochastic oracle to estimate gradients fast, that extra speed must compensate the additional iterations to, overall, reduce the algorithm runtime (wall-clock). We illustrate the latter in Appendix \ref{sec:app_federated} with a federated optimization problem, where stochastic gradients are used to reduce the communication overhead. 

Finally, we show in Sec.\ \ref{sec:finitesums} how Eq.\ \eqref{eq:general_condition_intro} captures the essence of gradient sampling techniques in finite-sum problems (e.g., SAGA) that cannot be explained with the strong-growth condition \cite{SR13}. The result is important not only because we can encompass existing techniques for estimating gradients, but also because it gives us a new condition that we can exploit to design new types of oracles. For example, to reduce the number of bits transmitted in federated optimization, which is currently only done with schemes that use the strong-growth condition.

\textbf{Technical novelty.} Algorithm \ref{al:smooth} is based on a novel combination of Nesterov's acceleration \cite{Nes05} and dual-averaging techniques for non-smooth optimization \cite{Nes09}, and it encompasses other algorithms (i.e., \cite{Nes05,CDO18,JKM20}) as special cases (see Sec.\ \ref{sec:selecting_sequences}). In particular, our $x_k$ update is a generalization of Nesterov's momentum update $x_k = (A_{k-1} /  A_k ) y_{k-1} + (\alpha_k / A_k) v_{k-1}$ for strongly convex objectives, which is crucial for obtaining the variance condition in Eq.\ \eqref{eq:general_condition_intro} (see also Corollary \ref{th:prop_unconstrained} in Sec.\ \ref{sec:bounding_nk}). The $y_k$ update in Algorithm \ref{al:smooth} also unifies previous algorithms when $f$ is just smooth. When 
\[
y_k \in \arg \min_{u \in C} \left\{  \langle \widetilde \nabla f(x_k),   u -  \hat y_k \rangle +  \frac{L}{2} \| u - x_k \|^2 \right\},
\]
 where $\hat y_k \coloneqq \frac{A_{k-1}}{A_k} y_{k-1} + \frac{\alpha_k}{A_k} v_{k}$, we recover the algorithm in \cite{Nes05}, and if $y_k = \hat y_k$, the algorithms in \cite{GY16,CDO18,JKM20}. Our approach is more flexible in terms of generating approximate solutions, however, it is not clear how we can exploit such flexibility to improve the algorithm's performance. Furthermore, selecting $y_k = \hat y_k$ is often the simplest option from a computational perspective as we can obtain $\hat y_k$ in closed form. Finally, our choice of step size follows the rationale used by Nesterov in \cite[pp.~71]{Nes04}, but we adapt it to the dual-averaging approach and include a parameter $\lambda$ to control the algorithm's robustness.
 

\section{Technical Approach}
\label{sec:unified_fom}

This section contains the derivation of the paper's main technical results. We present the results using a constructive approach that does not presume the steps of our algorithm. All the proofs are in the supplementary material. 

\subsection{Preliminaries}
\label{sec:preliminaries}

We want to solve the optimization problem:
\[
\underset{y \in C}{\text{minimize}} \quad f(y)
\]
where $f : C \to \R$ and $C \subseteq \R^n$ are convex. Let $y^\star \in \arg \min_{u \in C} f(u)$, $f^\star:= f(y^\star)$, and $\| \cdot \|$ denote any $\ell_p$ norm with $p \ge 1$. The dual norm  is defined in the standard way: $\| \cdot \|_* \coloneqq \sup_{u} \{ \langle u, \cdot \rangle \mid \| u \| \le 1\} $. 
We recall the following two definitions.

\begin{definition}[Smoothness]
\label{def:smoothness}
A function $f$ is $L$-smooth w.r.t.\ $\| \cdot \|$ if for all $x,y \in C$
\[f(y) \le f(x) + \langle \nabla f(x), y-x \rangle + \frac{L}{2} \| y - x \|^2.
\] 
\end{definition}
\begin{definition}[Strong convexity]
\label{def:strong_convexity}
A convex function $f$ is $\mu$-strongly convex w.r.t.\ $\| \cdot \|$ if for all $x,y \in C$
\[
f(y) \ge f(x) + \langle \nabla f(x), y-x \rangle + \frac{\mu}{2} \| y - x \|^2.
\] 
\end{definition}

\subsection{Setup and approach overview}

\underline{\textbf{Oracle.}} We minimize $f$ by querying an inexact first-order oracle $\smash{\widetilde {\mathcal O}}$. The gradients returned by the oracle $\smash{\widetilde {\mathcal O}}$ have the form
\begin{align}
\widetilde \nabla f(x_k) \coloneqq \nabla f(x_k) + \xi_k ,
\label{eq:noisemodel}
\end{align}
where $\xi_k \in \R^n$ is an additive perturbation. This inexact oracle model is very general and encompasses other models (e.g., deterministic, stochastic, adversarial) as special cases---see \cite{Asp08, Dev13} for other types of oracle models. Also, we will assume in this section that $\xi_k$ is an arbitrary vector and study the case where this is a random vector in Sec.\ \ref{sec:bounding_nk}. The case where gradients are exact corresponds to having $\xi_k = 0$ for all $k \ge 1$. 

\underline{\textbf{Sequences.}} 
As mentioned in Sec.\ \ref{sec:contributions}, we minimize $f$ by constructing {three sequences}. A sequence $\{x_i \in C\}_{i=1}^k$ of search points (i.e., to query the first-order oracle), a sequence $\{y_i \in C\}_{i=1}^k$ of approximate solutions, and a sequence of weights $\{ \alpha_i \ge 0  \}_{i=1}^k$. Besides the three main sequences, we also define the auxiliary sequences:\footnote{These sequences are typical in dual-averaging approaches; see, for example, \cite[Eqs.\ (2.4) and (2.9)]{Nes09}.}
\begin{align} 
 A_k \coloneqq \sum_{i=1}^k \alpha_i, 
 \qquad  s_k \coloneqq - \sum_{i=1}^k \alpha_i \widetilde \nabla f(x_i), 
 \qquad  v_k \in \arg\max_{u \in C} \left\{ \langle s_k, u \rangle - \varphi_k(u) \right\} \label{eq:v_select}
\end{align}
where 
\[ \varphi_k(u) \coloneqq  \phi(u) + \frac{\mu}{2} \sum_{i=1}^k {\alpha_i}\| x_i - u \|^2
\]
and $\phi$ is a $\sigma$-strongly convex function w.r.t.\ $\| \cdot \|$.
In brief, $A_k$ is the sum of the weights, $s_k$ the  negative sum of the weighted first-order information, and $v_k$ the vector in $C$ that maximizes the strongly concave function $\langle s_k, u \rangle - \varphi_k(u)$ w.r.t.\ $u$. Function $\varphi_k$ can be thought as a time-varying strongly convex prox-function when $f$ is $\mu$-strongly convex---otherwise $\varphi_k = \phi$ since $\mu = 0$. Also, and to streamline exposition, we make the following assumption.

\begin{assumption}
\label{as:norm}
$\varphi_k$ is $(\mu A_k + \sigma)$-strongly convex w.r.t. the norm $\| \cdot\|$.
\end{assumption} 
Assumption \ref{as:norm} is trivially satisfied for any $\ell_p$ norm when $f$ is just $L$-smooth. When $f$ is, in addition, $\mu$-strongly convex, the assumption is satisfied by any $\ell_p$ norm that is strongly convex with modulus larger than $1$. We assumed in Theorem \ref{th:main_theorem} that $\| \cdot \|$ is the $\ell_2$ norm to streamline exposition.

\underline{\textbf{Strategy.}} Our method of proof consists of constructing an upper bound on $f(y_k)$ \emph{without specifying where to sample gradients at intermediate points $i=1,\dots,k-1$}. 
The method is in contrast to steepest-descent methods (e.g., the classic gradient algorithm) where (i) the $k$'th approximate solution depends \emph{only} on the information in the $(k-1)$'th iterate, and (ii) approximate solutions are used to query the oracle in the next iteration (i.e., $x_k = y_{k-1}$). The use of two different sequences $\{x_i \in C\}_{i=1}^k$, $\{y_i \in C\}_{i=1}^k$ (i.e., evaluating the objective function and the gradients at different points) is naturally motivated by Definitions \ref{def:smoothness} and \ref{def:strong_convexity}. We have the following lemma.

\begin{lemma}
\label{th:dual_averaging_bound}
Let $f : C \to \R^n$ be a ($\mu$-strongly) convex function with $L$-Lipschitz continuous gradient. Select a $y_0 
\in C$ and let $A_0 = 0$. Suppose Assumption \ref{as:norm} holds and let $\phi$ be a non-negative and $\sigma$-strongly convex. For any sequences $\{x_i \in C\}_{i=1}^k$, $\{y_i \in C\}_{i=1}^k$,  $\{ \alpha_i \ge 0 \}_{i=1}^k$, we have 
\begin{align} 
 A_k (f(y_k) - f^\star)   & \le \phi(y^\star) + N_k +  \sum_{i=1}^k  \Bigg(  A_i \left\langle \widetilde \nabla f(x_i),   y_i -  \frac{A_{i-1}}{A_{i}} y_{i-1} - \frac{\alpha_i}{A_i} v_i \right\rangle +  \frac{L}{2} A_i \| y_i - x_i \|^2  \notag \\
& \quad 
-  \frac{1}{2L} A_{i-1} \| \nabla f(x_i) - \nabla f( y_{i-1}) \|_*^2   - \frac{\mu A_{i} + \sigma}{2} \left\| v_i - \hat v_{i-1}    \right\|^2 \Bigg)   \label{eq:mirror_bound} 
 \end{align}
where $N_k \coloneqq - \sum_{i=1}^k    \langle \xi_i,  A_i y_i - A_{i-1} y_{i-1} - \alpha_i y^\star  \rangle$ and  $ \hat v_{k-1} = \frac{\mu A_{k-1} + \sigma}{\mu A_{k} + \sigma} v_{k-1} + \frac{\mu \alpha_k}{\mu A_{k} + \sigma} x_k$.
\end{lemma}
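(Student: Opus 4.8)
The plan is to run a standard estimate-sequence / dual-averaging argument: build a quadratic lower model of $A_k f$, track it against the smoothness-based upper model of $f(y_k)$, and telescope. Define the model function $\Phi_k(u) \coloneqq \phi(u) + \sum_{i=1}^k \alpha_i( f(x_i) + \langle \widetilde\nabla f(x_i), u - x_i\rangle + \tfrac{\mu}{2}\|u-x_i\|^2 )$. Collecting its linear-in-$u$ part shows $\Phi_k(u) = \varphi_k(u) - \langle s_k, u\rangle + c_k$ for a constant $c_k$, so by the definition of $v_k$ in Eq.~\eqref{eq:v_select} the minimizer of $\Phi_k$ over $C$ is exactly $v_k$; set $\Phi_k^\star \coloneqq \Phi_k(v_k)$. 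Writing $\widetilde\nabla f(x_i) = \nabla f(x_i) + \xi_i$ and applying Definition~\ref{def:strong_convexity} termwise gives, for every $u \in C$, $\Phi_k(u) \le A_k f(u) + \phi(u) + \sum_{i=1}^k \alpha_i\langle \xi_i, u - x_i\rangle$; evaluating at $u = y^\star$ produces the upper estimate $\Phi_k^\star \le A_k f^\star + \phi(y^\star) + \sum_{i=1}^k\alpha_i\langle\xi_i, y^\star - x_i\rangle$.

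I would prove the complementary lower estimate $A_k f(y_k) \le \Phi_k^\star + \sum_{i=1}^k T_i$ by induction on $k$, with base case $A_0 f(y_0) - \Phi_0^\star = -\phi(v_0) \le 0$ (using $A_0 = 0$ and $\phi \ge 0$). The inductive step rests on three inequalities. First, $L$-smoothness (Definition~\ref{def:smoothness}) at $y_k$ about $x_k$, with $\nabla f(x_k) = \widetilde\nabla f(x_k) - \xi_k$. Second, the convexity-plus-smoothness lower bound $f(y_{k-1}) \ge f(x_k) + \langle\nabla f(x_k), y_{k-1}-x_k\rangle + \tfrac{1}{2L}\|\nabla f(x_k)-\nabla f(y_{k-1})\|_*^2$, which, multiplied by $A_{k-1}$ and rearranged, converts $A_{k-1} f(x_k)$ into $A_{k-1}f(y_{k-1})$ and creates the $-\tfrac{A_{k-1}}{2L}\|\nabla f(x_k)-\nabla f(y_{k-1})\|_*^2$ term. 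Third, a recursion for $\Phi_k^\star$: since $\Phi_{k-1}$ is $(\mu A_{k-1}+\sigma)$-strongly convex with minimizer $v_{k-1}$ (Assumption~\ref{as:norm}), I would bound $\Phi_{k-1}(v_k) \ge \Phi_{k-1}^\star + \tfrac{\mu A_{k-1}+\sigma}{2}\|v_k-v_{k-1}\|^2$, add the $k$-th term of $\Phi_k$, and complete the square in the two quadratics $\tfrac{\mu A_{k-1}+\sigma}{2}\|v_k-v_{k-1}\|^2 + \tfrac{\mu\alpha_k}{2}\|v_k-x_k\|^2$. The convex combination that emerges is precisely $\hat v_{k-1}$, and after discarding the nonnegative leftover I would obtain $\Phi_k^\star \ge \Phi_{k-1}^\star + \alpha_k f(x_k) + \alpha_k\langle\widetilde\nabla f(x_k), v_k - x_k\rangle + \tfrac{\mu A_k+\sigma}{2}\|v_k-\hat v_{k-1}\|^2$.

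Subtracting the $\Phi_k^\star$ recursion from the two smoothness bounds makes the $\alpha_k f(x_k)$ terms cancel and yields $T_k$. The decisive simplification is that the three surviving $\widetilde\nabla f(x_k)$ inner products, namely $A_{k-1}\langle\widetilde\nabla f(x_k), x_k-y_{k-1}\rangle + A_k\langle\widetilde\nabla f(x_k), y_k-x_k\rangle - \alpha_k\langle\widetilde\nabla f(x_k), v_k-x_k\rangle$, have a vanishing coefficient on $x_k$ (since $A_{k-1}-A_k+\alpha_k = 0$), so they collapse to $A_k\langle\widetilde\nabla f(x_k), y_k - \tfrac{A_{k-1}}{A_k}y_{k-1} - \tfrac{\alpha_k}{A_k}v_k\rangle$, matching the summand in Eq.~\eqref{eq:mirror_bound}. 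Telescoping $A_k f(y_k) - \Phi_k^\star \le \sum_{i=1}^k T_i$ and inserting the upper estimate on $\Phi_k^\star$ then gives $A_k(f(y_k)-f^\star) \le \phi(y^\star) + \sum_{i=1}^k(T_i + \alpha_i\langle\xi_i, y^\star - x_i\rangle)$.

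The main obstacle is the noise bookkeeping. The $\xi_k$ contributions arrive split across two steps: the $-A_k\langle\xi_k, y_k-x_k\rangle$ from the smoothness step and the $-A_{k-1}\langle\xi_k, x_k-y_{k-1}\rangle$ hidden inside $A_{k-1}\langle\nabla f(x_k), x_k-y_{k-1}\rangle$ after substituting $\nabla f(x_k) = \widetilde\nabla f(x_k) - \xi_k$. These combine to $-\langle\xi_k, A_k y_k - A_{k-1}y_{k-1} - \alpha_k x_k\rangle$, which carries a $-\alpha_k x_k$ rather than the $-\alpha_k y^\star$ of $N_k$; the mismatch is repaired exactly by the $\alpha_k\langle\xi_k, y^\star - x_k\rangle$ term brought in from evaluating the model lower bound at $y^\star$, and only after this cancellation do the per-step noise terms assemble into $N_k = -\sum_{i=1}^k\langle\xi_i, A_iy_i - A_{i-1}y_{i-1} - \alpha_iy^\star\rangle$. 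Verifying that the remainder dropped in the completion of squares is genuinely nonnegative, and that the constrained minimizer $v_{k-1}$ satisfies the strong-convexity descent inequality via first-order optimality on $C$, are the remaining points requiring care.
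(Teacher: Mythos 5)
Your proposal is correct, and it is the Fenchel-dual transcription of the paper's own argument rather than a verbatim copy, so a brief comparison is warranted. The paper splits the proof in two: an intermediate bound (Lemma \ref{th:general_upper_bound}) obtained from smoothness at $(x_k,y_k)$, strong convexity at $(x_k,y^\star)$, and the cocoercivity-type inequality (derived there via $1/L$-strong convexity of $f^*$), followed by a conjugate recursion on $\varphi_k^*(s_k)$ in the style of Nesterov's dual averaging. You instead run a single induction on the primal estimate-sequence minimum $\Phi_k^\star = \min_{u\in C}\Phi_k(u)$; since $\Phi_k(u) = \varphi_k(u) - \langle s_k, u\rangle + c_k$ with $c_k = \sum_{i=1}^k \alpha_i\bigl(f(x_i) - \langle \widetilde\nabla f(x_i), x_i\rangle\bigr)$, one has $\Phi_k^\star = c_k - \varphi_k^*(s_k)$, and your recursion for $\Phi_k^\star$ is line-for-line the paper's recursion for $\varphi_k^*(s_k)$: your strong-convexity growth bound at the constrained minimizer $v_{k-1}$ plays the role of the paper's conjugate identities (Lemma \ref{th:conjugate_facts}), and your completion of squares merging $\tfrac{\mu A_{k-1}+\sigma}{2}\|v_k - v_{k-1}\|^2 + \tfrac{\mu\alpha_k}{2}\|v_k - x_k\|^2$ into $\tfrac{\mu A_k+\sigma}{2}\|v_k - \hat v_{k-1}\|^2$ is the paper's Jensen step on $\|\cdot\|^2$ (for a general $\ell_p$ norm you should phrase it as Jensen rather than an exact square completion, which is Euclidean-specific). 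Likewise, your coefficient cancellation $A_{k-1} - A_k + \alpha_k = 0$ and the repair of the $-\alpha_k x_k$ versus $-\alpha_k y^\star$ mismatch in the noise terms reproduce exactly what the paper achieves by inserting $0 = \sum_{i=1}^k \langle \xi_i - \xi_i, A_i y_i - A_i y_{i-1} - \alpha_i y^\star\rangle$. What your rendering buys: it avoids conjugate calculus entirely and handles the constrained case more carefully --- the paper invokes $\nabla\varphi_{k-1}(v_{k-1}) = s_{k-1}$ via Lemma \ref{th:conjugate_facts}(iii), which over a constrained set $C$ holds only as the variational inequality $\langle \nabla\varphi_{k-1}(v_{k-1}) - s_{k-1}, u - v_{k-1}\rangle \ge 0$, and your first-order-optimality argument supplies precisely this and nothing more. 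What the paper's dual form buys: it tracks the dual state $s_k$ explicitly, so the recursion drops directly into the dual-averaging template of \cite{Nes09} and makes the connection to that literature transparent. One caveat applying equally to both proofs: the cocoercivity inequality $f(y_{k-1}) \ge f(x_k) + \langle \nabla f(x_k), y_{k-1} - x_k\rangle + \tfrac{1}{2L}\|\nabla f(x_k) - \nabla f(y_{k-1})\|_*^2$ is standard for $L$-smooth convex functions on $\R^n$ but requires justification on a restricted domain; since the paper's Eq.\ \eqref{eq:coercivity_up} is the identical inequality, this is not a gap relative to the paper.
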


Importantly, Eq.\ \eqref{eq:mirror_bound} holds for arbitrary sequences $\{x_i \in C\}_{i=1}^k$, $\{y_i \in C\}_{i=1}^k$,  and $\{ \alpha_i \ge 0 \}_{i=1}^k$.  
In brief, the first term in the right-hand-side of  Eq.\ \eqref{eq:mirror_bound} is \emph{a constant} that depends on an optimal solution $y^\star$ and the choice of prox-function $\phi$.
The second term, $N_k$, is equal to zero if $\xi_k = 0$ for all $k\ge 1$ (i.e., the gradients are exact). The third term is the sum of $k$ terms and controls how the right-hand-side of Eq.\ \eqref{eq:mirror_bound} grows. Observe that if the third-term is non-positive and $N_k \le 0$, we have that $ f(y_k) - f^\star \le {\phi(y^\star)}/{A_k}$, and so the convergence rate of the algorithm depends on how fast $A_k$ grows since $\phi(y^\star)$ is a constant. Such an approach is, in essence, equivalent to the \emph{estimate sequence} technique employed by Nesterov (see, e.g., \cite[Sec.\ 2.2.1]{Nes04}). The next step is to select the three sequences.

\subsection{Selecting sequences $\{ x_i \in C \}_{i=1}^k$, $\{ y_i \in C \}_{i=1}^k$, $\{ \alpha_i \ge 0 \}_{i=1}^k$, and convergence}
\label{sec:selecting_sequences}

With the result in Lemma \ref{th:dual_averaging_bound}, one may want to find the values $x_k \in C$, $y_k \in C$ and $\alpha_k \ge 0$ that \emph{minimize} the right-hand-side of Eq.\ \eqref{eq:mirror_bound} for fixed sequences $\{ x_i \in C \}_{i=1}^{k-1}$, $\{ y_i \in C \}_{i=1}^{k-1}$, $\{ \alpha_i \ge 0 \}_{i=1}^{k-1}$. Unfortunately, finding such values is difficult as $v_k$ depends on the gradients and weights up to time $k$. Namely, $x_k$ and $\alpha_k$ cannot be selected for a fixed $v_k$, as $v_k$ depends on ${\widetilde \nabla f(x_k)}$ and $\alpha_k$. Instead, we select sequences to \emph{control} how the third-term in the right-hand-side of Eq.\ \eqref{eq:mirror_bound} grows. 
The strategy is to make the term
\begin{align}
A_k \left\langle \widetilde \nabla f(x_k),   y_k -  \frac{A_{k-1}}{A_{k}} y_{k-1} - \frac{\alpha_k}{A_k} v_k \right\rangle   +  \frac{L}{2} A_k \| y_k - x_k \|^2 
\label{eq:fund_ineq}
\end{align}
\emph{proportional} to $\| \hat v_k - v_k \|^2$, which will allows us to control the convergence rate of the algorithm with sequence $A_k$. 
We do that as follows. An approximate solution $y_k$ is selected from the convex set
\begin{align}
Y_k \coloneqq  \left\{ u \in C   : \text{Eq.}\ \eqref{eq:fund_ineq} \le \frac{L}{2} \|\hat y_k - x_k\|^2 \right\}, \label{eq:setYk}
\end{align} 
where $
\hat y_k \coloneqq \frac{A_{k-1}}{A_k} y_{k-1} + \frac{\alpha_k}{A_k} v_{k}$. The search point is selected as
\begin{align}
 x_k & = \frac{(\mu A_k +\sigma) A_{k-1}}{\mu (A_{k} - \alpha_k) (A_{k} + \alpha_k) + \sigma A_k } y_{k-1}  + \frac{(\mu A_{k-1} + \sigma) \alpha_k}{\mu (A_{k} - \alpha_k)(A_k + \alpha_k) + \sigma A_k} v_{k-1}, \label{eq:x_select}
\end{align}
which ensures 
\begin{align*}
\text{Eq.}\ \eqref{eq:fund_ineq} \le \smash{\|  \hat y_k - x_k \|^2 \le \frac{\alpha_k^2}{A_k^{2}} \| v_k -   \hat v_{k-1} \|^2}.
\end{align*}
Hence, with a $y_k \in Y_k$ and $x_k$ as in Eq.\ \eqref{eq:x_select}, we can upper bound Eq.\ \eqref{eq:fund_ineq} by 
\begin{align}
\left(L \frac{\alpha_k^2}{A_k} - \mu A_{k} - \sigma \right)   \left\|  v_k - \hat v_k   \right\|^2.
\label{eq:c_bound_acc}
\end{align}
Finding the weights that minimize Eq.\ \eqref{eq:c_bound_acc} is, again, a non-tractable optimization problem as $v_k$ depends on $\alpha_k$. Instead, we maximize $\alpha_k$ subject to ${L {\alpha_k^2}A_k^{-1} \le \mu A_{k}  + \sigma }$ as this makes Eq.\ \eqref{eq:c_bound_acc} non-positive for all $k\ge1$, and so we can drop the term. In particular, we select 
\begin{align}
L \frac{\alpha_k^2}{A_k} = \lambda( \mu A_{k}  + \sigma  )
\label{eq:alpha_select}
\end{align}
 where $\lambda \in (0,1]$ is a parameter that allows us to replace the inequality constraint with an equality.
We can obtain a weight $\alpha_k \ge 0$ that satisfies $L {\alpha_k^2}A_k^{-1} = \lambda( \mu A_{k}  + \sigma  )$ by finding the positive root of the quadratic equation: 
\begin{align}
L \alpha_k^2 - \lambda \mu A_{k}^2  - \lambda \sigma A_{k} = ( L - \mu ) \alpha_k^2   - \lambda (2 \mu A_{k-1} + \sigma)\alpha_k - \lambda (\mu A_{k-1}^2  + \sigma A_{k-1}), \label{eq:alpha_select}
\end{align} which can be obtained in closed form. 

Finally, selecting $y_k$, $x_k$ and $\alpha_k$ as indicated above, we obtain the bound
\begin{align}
A_k (f(y_k) - f^\star) & \le \phi(y^\star) + N_k - \frac{1}{2L}\sum_{i=1}^k A_{i-1} \| \nabla f(x_i) - \nabla f( y_{i-1}) \|_*^2 \notag \\
& \qquad - (1-\lambda) \sum_{i=1}^k (\mu A_i + \sigma) \| v_i - \hat v_{i-1} \|^2 .
\label{eq:accelerated_bound}
\end{align}

The third and fourth terms in the right-hand-side of Eq.\ \eqref{eq:accelerated_bound} are non-positive. Hence, if $N_k \le 0$, the convergence rate depends on how fast $A_k$ grows. We leave in the Appendix (Sec.\ \ref{sec:proof_main_theorem}) the proof of the lower bound on $A_k$.

In the next section, we show how to use the third and forth terms in the right-hand-side of Eq.\ \eqref{eq:accelerated_bound} to cancel $N_k$ when this is not zero (due to having inexact gradients). 
%

\subsection{Bounding the noise term $N_k$}
\label{sec:bounding_nk}
 
Now, we suppose that vectors $\xi_k$ are zero-mean and independent random variables. Thus, $\smash{\mathbf E_{\xi_k} [ \widetilde \nabla f(x_k)] = \nabla f(x_k)}$ for all $k\ge 1$.\footnote{This assumption is necessary to obtain a bound on $N_k \coloneqq - \sum_{i=1}^k    \langle \xi_i,  A_i y_i - A_{i-1} y_{i-1} - \alpha_i y^\star  \rangle$ that does not depend on $y^\star$.}
We proceed to upper bound $N_k$ and then show, in Lemma \ref{th:prop_constrained}, a sufficient condition for a stochastic oracle not to degrade Algorithm's \ref{al:smooth} convergence rate. 
\begin{lemma} 
\label{th:bound_unbiased}
Suppose that the noise vectors $\xi_k$ are independent and have zero mean for all $k$, i.e., $\mathbf E[\xi_k] = 0$. Also, select $\lambda \in (0,1]$ and suppose $y_k = \hat y_k \in Y_k$ in Algorithm \ref{al:smooth}. Then,
\begin{align}
\mathbf E_{\xi_1,\dots,\xi_k}[N_k] \le \frac{\lambda}{L} \sum_{i=1}^k  A_i \mathbf E_{\xi_i} [ \| \xi_i \|_*^2 ]
\label{eq:Nk}
\end{align}
\end{lemma}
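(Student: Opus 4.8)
The plan is to bound $N_k \coloneqq -\sum_{i=1}^k \langle \xi_i, A_i y_i - A_{i-1} y_{i-1} - \alpha_i y^\star \rangle$ by exploiting the assumption $y_k = \hat y_k$ to eliminate the dependence on the optimal point $y^\star$, and then apply a Cauchy–Schwarz/Young-type inequality to convert the inner products into the quadratic form $\|\xi_i\|_*^2$. The key structural observation is that with $y_k = \hat y_k = \frac{A_{k-1}}{A_k} y_{k-1} + \frac{\alpha_k}{A_k} v_k$, we have the telescoping identity $A_k y_k - A_{k-1} y_{k-1} = \alpha_k v_k$, so the increment $A_i y_i - A_{i-1} y_{i-1} - \alpha_i y^\star$ collapses to $\alpha_i(v_i - y^\star)$. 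This rewrites $N_k = -\sum_{i=1}^k \alpha_i \langle \xi_i, v_i - y^\star \rangle$, which is the form that cooperates with the independence and zero-mean hypotheses.

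**Next I would** handle the expectation via a conditioning/martingale argument. The obstacle is that $v_i$ depends on $\xi_i$ (through $s_i$), so $\mathbf E[\langle \xi_i, v_i\rangle]\ne 0$ in general — this is precisely why the term does not simply vanish. To isolate the part that survives, I would split $v_i = v_{i-1} + (v_i - v_{i-1})$, where $v_{i-1}$ is measurable with respect to $\xi_1,\dots,\xi_{i-1}$ and hence independent of $\xi_i$. Then $\mathbf E_{\xi_i}[\langle \xi_i, v_{i-1} - y^\star\rangle] = \langle \mathbf E[\xi_i], v_{i-1} - y^\star\rangle = 0$, leaving only $\mathbf E[-\alpha_i \langle \xi_i, v_i - v_{i-1}\rangle]$. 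For this remaining cross term I would apply the Fenchel–Young inequality $\langle a, b\rangle \le \frac{t}{2}\|a\|_*^2 + \frac{1}{2t}\|b\|^2$ with a parameter $t$ chosen to match the coefficients appearing in Eq.\ \eqref{eq:accelerated_bound}, yielding
\[
-\alpha_i \langle \xi_i, v_i - v_{i-1}\rangle \le \frac{A_i}{\lambda}\cdot\frac{\lambda}{L}\cdot\frac{\alpha_i^2}{A_i}\cdot\frac{1}{2}\|\xi_i\|_*^2 \;+\; (\text{a multiple of }\|v_i - v_{i-1}\|^2),
\]
where the $\|v_i - v_{i-1}\|^2$ piece is designed to be absorbed by the non-positive fourth term $-(1-\lambda)\sum_i (\mu A_i + \sigma)\|v_i - \hat v_{i-1}\|^2$ in the earlier bound.

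**The main obstacle** I anticipate is reconciling $v_i - v_{i-1}$ with the quantity $v_i - \hat v_{i-1}$ that actually appears in Eq.\ \eqref{eq:accelerated_bound}, since $\hat v_{i-1}$ is the convexly-shifted point $\frac{\mu A_{i-1}+\sigma}{\mu A_i + \sigma}v_{i-1} + \frac{\mu\alpha_i}{\mu A_i + \sigma}x_i$ rather than $v_{i-1}$ itself; in the purely smooth case ($\mu = 0$) these coincide, but in the strongly convex case I would need to either re-center the Young step around $\hat v_{i-1}$ or control the discrepancy $\|v_{i-1} - \hat v_{i-1}\|$. A clean route is to replay the splitting with $\hat v_{i-1}$ in place of $v_{i-1}$: since $\hat v_{i-1}$ is also measurable with respect to $\xi_1,\dots,\xi_{i-1}$ (it is built from $v_{i-1}$ and $x_i$, both determined before $\xi_i$ is drawn), the zero-mean step goes through identically, and the leftover cross term becomes $-\alpha_i\langle \xi_i, v_i - \hat v_{i-1}\rangle$, matching the fourth term exactly.

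**Finally,** by choosing the Young parameter so that the $\|v_i - \hat v_{i-1}\|^2$ coefficient equals $(\mu A_i + \sigma)$ (up to the slack afforded by $\lambda \in (0,1]$) and using the defining relation $L\alpha_i^2/A_i = \lambda(\mu A_i + \sigma)$ from Eq.\ \eqref{eq:alpha_select}, the surviving $\|\xi_i\|_*^2$ coefficient simplifies to exactly $\frac{\lambda}{L}A_i$. Summing over $i$ and taking the tower/total expectation over $\xi_1,\dots,\xi_k$ then delivers the claimed bound $\mathbf E_{\xi_1,\dots,\xi_k}[N_k] \le \frac{\lambda}{L}\sum_{i=1}^k A_i\, \mathbf E_{\xi_i}[\|\xi_i\|_*^2]$. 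I expect the bookkeeping of the $\mu$-dependent coefficients in the strongly convex regime to be the only genuinely delicate part; the smooth case should fall out as the $\mu=0$ specialization.
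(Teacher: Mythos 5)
Your reduction of $N_k$ to $-\sum_{i=1}^k \alpha_i\langle \xi_i, v_i - y^\star\rangle$ via $y_i = \hat y_i$ is exactly the paper's first step, and your instinct to re-center at a point measurable with respect to $\xi_1,\dots,\xi_{i-1}$ so that the zero-mean hypothesis kills the linear term is also right. The gap is in your choice of centering point. Centering at $v_{i-1}$ (or at the shifted point $\hat v_{i-1}$) leaves the cross term $-\alpha_i\langle\xi_i, v_i - \hat v_{i-1}\rangle$, and $v_i - \hat v_{i-1}$ contains a \emph{deterministic} drift: even with exact gradients ($\xi_i = 0$) the maximizer moves, since $s_i = s_{i-1} - \alpha_i\widetilde\nabla f(x_i)$. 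Hence $\mathbf E[\|v_i-\hat v_{i-1}\|^2]$ is not $O(\mathbf E[\|\xi_i\|_*^2])$, and after Young's inequality you are forced to carry a genuinely positive multiple of $\|v_i - \hat v_{i-1}\|^2$. This has two consequences. First, what you prove is not Eq.\ \eqref{eq:Nk}: the lemma bounds $\mathbf E[N_k]$ by the variance sum alone, whereas your bound holds only for the \emph{combination} of $N_k$ with the fourth term of Eq.\ \eqref{eq:accelerated_bound}. Second, the absorption fails quantitatively on part of the stated range: matching the coefficient $\frac{\lambda A_i}{L}$ on $\|\xi_i\|_*^2$ forces the Young parameter $t = 2/(\mu A_i + \sigma)$, leaving $\frac{\mu A_i+\sigma}{4}\|v_i-\hat v_{i-1}\|^2$, which the available slack $(1-\lambda)(\mu A_i + \sigma)$ can absorb only when $\lambda \le 3/4$; at $\lambda = 1$ the negative term in Eq.\ \eqref{eq:accelerated_bound} vanishes entirely, yet the lemma is claimed for all $\lambda\in(0,1]$.

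The paper's proof avoids both problems by centering at a different past-measurable point: the noiseless ghost maximizer $\hat v_k \in \arg\max_{u\in C}\{\langle s_{k-1} - \alpha_k\nabla f(x_k), u\rangle - \varphi_k(u)\}$, i.e., $\nabla\varphi_k^*$ evaluated at the dual aggregate built with the \emph{exact} $k$-th gradient (note this is not the $\hat v_{k-1}$ of Lemma \ref{th:dual_averaging_bound}). This point depends only on $\xi_1,\dots,\xi_{k-1}$, so the linear term still vanishes in expectation, and---this is the step your plan is missing---$v_k$ and this ghost point are images under $\nabla\varphi_k^*$ of two dual vectors differing by exactly $\alpha_k\xi_k$. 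Since $\varphi_k$ is $(\mu A_k+\sigma)$-strongly convex, $\nabla\varphi_k^*$ is $\frac{1}{\mu A_k+\sigma}$-Lipschitz (Lemma \ref{th:smooth_strong_duality}), whence $\|v_k-\hat v_k\|\le \frac{\alpha_k}{\mu A_k+\sigma}\|\xi_k\|_*$, and Cauchy--Schwarz gives $\alpha_k\|\xi_k\|_*\|v_k-\hat v_k\|\le \frac{\alpha_k^2}{\mu A_k+\sigma}\|\xi_k\|_*^2 = \frac{\lambda A_k}{L}\|\xi_k\|_*^2$ by the step-size identity $L\alpha_k^2/A_k = \lambda(\mu A_k+\sigma)$. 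There is no leftover geometric term, no borrowing from Eq.\ \eqref{eq:accelerated_bound}, and the bound holds for every $\lambda\in(0,1]$. If you replace your centering point with this ghost maximizer and trade Young's inequality for the Lipschitz bound on $\nabla\varphi_k^*$, the rest of your argument goes through essentially verbatim.
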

That is, $N_k$ can be upper bounded by the sum of the variances $\mathbf E_{\xi_i}[ \| \xi_i \|^2_* ]$ from $i=1,\dots,k$ weighted by $ \lambda A_i / L$.  Note the bound holds independently of whether $f$ is smooth or smooth and strongly convex. The assumption that $y_k = \hat y_k \in Y_k$ is useful to streamline the analysis and does not change the result in Theorem \ref{th:main_theorem}. With the result in Lemma \ref{th:bound_unbiased}, we can derive the sufficient condition.

\begin{lemma}
\label{th:prop_constrained}
Let $\lambda \in (0,1]$, and suppose ${\mathbf E [ \widetilde \nabla f(x_k) ]  = \nabla f(x_k)}$ and 
 \begin{align*} 
 \mathbf E [ \| \xi_k \|_*^2 ]  \le \frac{1}{4\lambda } \cdot \frac{A_{k-1}}{A_k}   \| \nabla f(x_k) - \nabla f(x_{k-1}) \|_*^2 ,
 \end{align*}
 hold for all $k\ge1$. Then, the last three terms in the right-hand-side of Eq.\ \eqref{eq:accelerated_bound} are non-positive. 
\end{lemma}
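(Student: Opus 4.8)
The plan is to take expectations of both sides of Eq.~\eqref{eq:accelerated_bound} and show that, under the hypotheses, the three stochastic terms on its right-hand side sum to something non-positive. The last of these, $-(1-\lambda)\sum_{i=1}^{k}(\mu A_i+\sigma)\|v_i-\hat v_{i-1}\|^2$, is non-positive for \emph{every} realization: since $\lambda\in(0,1]$ we have $1-\lambda\ge 0$, and $\mu A_i+\sigma\ge 0$ because $\mu,\sigma\ge 0$ and $A_i\ge 0$. So I would set this term aside immediately and concentrate on pairing the noise term $N_k$ against the negative gradient-difference sum $-\frac{1}{2L}\sum_{i=1}^{k}A_{i-1}\|\nabla f(x_i)-\nabla f(y_{i-1})\|_*^2$.

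The core of the argument is a two-step bound on $\mathbf E[N_k]$. First I would invoke Lemma~\ref{th:bound_unbiased}, whose hypotheses (zero-mean independent $\xi_k$ and $y_k=\hat y_k\in Y_k$) are in force, to obtain $\mathbf E[N_k]\le \frac{\lambda}{L}\sum_{i=1}^{k}A_i\,\mathbf E[\|\xi_i\|_*^2]$ from Eq.~\eqref{eq:Nk}. Then I would substitute the assumed variance bound termwise. The weight $\lambda A_i/L$ multiplied by $\frac{1}{4\lambda}\frac{A_{i-1}}{A_i}\|\nabla f(x_i)-\nabla f(y_{i-1})\|_*^2$ collapses cleanly: the factors of $\lambda$ cancel and $A_i\cdot A_{i-1}/A_i=A_{i-1}$, leaving $\mathbf E[N_k]\le \frac{1}{4L}\sum_{i=1}^{k}A_{i-1}\|\nabla f(x_i)-\nabla f(y_{i-1})\|_*^2$. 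This is exactly half the magnitude of the descent sum, so adding them gives $\mathbf E[N_k]-\frac{1}{2L}\sum_{i=1}^{k}A_{i-1}\|\nabla f(x_i)-\nabla f(y_{i-1})\|_*^2\le -\frac{1}{4L}\sum_{i=1}^{k}A_{i-1}\|\nabla f(x_i)-\nabla f(y_{i-1})\|_*^2\le 0$. Combined with the discarded fourth term, the expectation of the last three terms of Eq.~\eqref{eq:accelerated_bound} is non-positive, which is the claim.

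I expect the main obstacle to be bookkeeping the reference point in the variance hypothesis rather than any hard inequality. The constant $\frac{1}{4\lambda}$ is engineered so that, after it is scaled by the factor $\lambda A_i/L$ produced by Lemma~\ref{th:bound_unbiased}, the resulting coefficient is $\frac{1}{4L}$---precisely half of the $\frac{1}{2L}$ multiplying the descent term; this factor-of-two slack is what forces the combined sum negative and explains why this particular constant appears. The one point demanding care is that the gradient difference controlled by the hypothesis must be the \emph{same} sequence $\nabla f(x_i)-\nabla f(y_{i-1})$ appearing in Eq.~\eqref{eq:accelerated_bound}, so that the two sums cancel termwise and not merely in aggregate; under the standing assumption $y_{i-1}=\hat y_{i-1}\in Y_{i-1}$ I would read the ``previous point'' entering the condition as $y_{i-1}$ and verify this identification before applying the substitution. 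Once it is fixed, no further smoothness or triangle-type estimates are needed and the conclusion follows directly.
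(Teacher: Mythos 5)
Your bookkeeping worry was the right one, but you resolved it the wrong way, and the resulting gap is fatal to the proposal. The hypothesis of the lemma controls $\mathbf E[\|\xi_k\|_*^2]$ by $\|\nabla f(x_k)-\nabla f(x_{k-1})\|_*^2$, where $x_{k-1}$ is the previous \emph{query} point, not $y_{k-1}$: by Eq.~\eqref{eq:x_select}, $x_{k-1}$ is a nontrivial convex combination of $y_{k-2}$ and $v_{k-2}$, so generically $x_{k-1}\neq y_{k-1}$ and there is no inequality relating $\|\nabla f(x_i)-\nabla f(x_{i-1})\|_*^2$ to $\|\nabla f(x_i)-\nabla f(y_{i-1})\|_*^2$ in either direction. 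Concretely, one can have $\nabla f(x_i)=\nabla f(y_{i-1})$ while $\nabla f(x_{i-1})$ differs, in which case the hypothesis licenses strictly positive noise variance at step $i$ but your descent budget $-\frac{1}{2L}A_{i-1}\|\nabla f(x_i)-\nabla f(y_{i-1})\|_*^2$ is zero; having already discarded the $(1-\lambda)$-term, you are left with nothing to absorb $\mathbf E[N_k]$. Reading the condition ``at $y_{i-1}$'' would also change the statement being proved, and the version at $x_{i-1}$ is the one the paper actually needs downstream (memory oracles such as SAGA in Sec.~\ref{sec:sup_app_saga} store gradients at past query points $x$, not at the averaged iterates $y$).

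The paper's proof bridges exactly this mismatch, and it does so by \emph{spending} the fourth term you threw away. Via Lemma~\ref{th:intermediate_lemma_lm} (with $m=1$, which forces $\lambda\le 1/2$), the step-size identity $L\alpha_i^2/A_i=\lambda(\mu A_i+\sigma)$ together with $\|y_i-x_i\|\le \frac{\alpha_i}{A_i}\|v_i-\hat v_{i-1}\|$ and $L$-Lipschitz gradients shows that $(1-\lambda)\frac{\mu A_i+\sigma}{2}\|v_i-\hat v_{i-1}\|^2$ dominates $\frac{A_i}{2L}\|\nabla f(x_i)-\nabla f(y_i)\|_*^2$. Lemma~\ref{th:compact_negative_term} then shifts indices so that at step $i$ one holds both $\|\nabla f(x_{i-1})-\nabla f(y_{i-1})\|_*^2$ and $\|\nabla f(x_i)-\nabla f(y_{i-1})\|_*^2$ with weight $\frac{A_{i-1}}{2L}$, and applies Lemma~\ref{th:squared_norm_upper_bound} in the form $\frac{1}{2}\|a-b\|^2\le\|a-c\|^2+\|b-c\|^2$ with $c=\nabla f(y_{i-1})$ to produce the usable budget $-\frac{1}{4L}\sum_{i=1}^k A_{i-1}\|\nabla f(x_i)-\nabla f(x_{i-1})\|_*^2$. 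Only then does the cancellation you computed go through: Lemma~\ref{th:bound_unbiased} gives $\mathbf E[N_k]\le\frac{\lambda}{L}\sum_i A_i\,\mathbf E[\|\xi_i\|_*^2]$, and the hypothesis makes each summand at most $\frac{1}{4L}A_{i-1}\|\nabla f(x_i)-\nabla f(x_{i-1})\|_*^2$, matching the budget \emph{exactly}, with no factor-of-two slack left over. The constant $\frac{1}{4\lambda}$ is thus not ``half the descent coefficient'' as you surmised; the halving comes from the triangle-type conversion step, which your proposal skips.
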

That is, if the variance  $\mathbf E [ \| \xi_k \|_*^2 ] =  \mathbf E [ \| \widetilde \nabla f(x_k) - \nabla f(x_k) \|_*^2 ]$ is proportional to the difference of the exact gradients at $x_k$ and $x_{k-1}$, we have that $\mathbf E[ f(y_k) - f^\star] \le \phi(x^\star)/A_k $. Also, note that if the estimated gradients are exact (i.e., $\widetilde \nabla f(x_k) = \nabla f(x_k)$), the  condition in Lemma \ref{th:prop_constrained} is satisfied trivially.
Parameter $\lambda \in (0,1]$  controls the robustness of Algorithm \ref{al:smooth}. By making $\lambda$ small, Algorithm \ref{al:smooth} can handle stochastic gradients with larger variance. However, at the price of ``shrinking'' the strong convexity constants in the problem---see the discussion after Theorem \ref{th:main_theorem}. 

When the problem is unconstrained, the variance condition can be simplified as follows.

\begin{corollary}[Unconstrained optimization]
\label{th:prop_unconstrained}
Let  $C = \R^n$, $\phi(\cdot) = \frac{1}{2} \| \cdot \|^2$ (i.e.,  $\sigma = 1$) and $\| \cdot \|$ be the $\ell_2$ norm. Suppose ${\mathbf E [ \widetilde \nabla f(x_k) ]  = \nabla f(x_k)}$ for all $k\ge 1$, and that 
\begin{align}
\mathbf E [ \| \xi_k \|_2^2 ] \le  \frac{(1-\lambda)}{(1+\lambda)}  \left\|  \nabla f(x_k) \right\|^2_2
\label{eq:condition_unconstrained}
\end{align}
with $\lambda \in (0,1]$. Then, the last three terms in the right-hand-side of Eq.\ \eqref{eq:accelerated_bound} are non-positive. 
\end{corollary}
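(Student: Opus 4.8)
The plan is to specialize the master bound \eqref{eq:accelerated_bound} to the Euclidean unconstrained case and cancel the noise term $N_k$ against the non-positive fourth term $-(1-\lambda)\sum_i(\mu A_i+\sigma)\|v_i-\hat v_{i-1}\|^2$, while simply discarding the third term, which is already $\le 0$. I would keep $y_k=\hat y_k$ throughout, matching the hypothesis of Lemma \ref{th:bound_unbiased}, and condition on the history $\mathcal F_{i-1}$, under which $x_i,v_{i-1},A_i,\alpha_i$ are deterministic and $\xi_i$ is a fresh zero-mean draw. Since $\|\cdot\|=\|\cdot\|_*$ for the $\ell_2$ norm, Lemma \ref{th:bound_unbiased} supplies $\mathbf E[N_k]\le\frac{\lambda}{L}\sum_{i=1}^k A_i\,\mathbf E[\|\xi_i\|^2]$, so the task reduces to dominating each $\frac{\lambda}{L}A_i\,\mathbf E[\|\xi_i\|^2]$ by the $i$-th fourth-term contribution.

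The crux is an explicit form for $v_i-\hat v_{i-1}$. With $C=\R^n$, $\phi=\tfrac12\|\cdot\|^2$ and $\sigma=1$, setting the gradient of $\langle s_k,u\rangle-\varphi_k(u)$ to zero in \eqref{eq:v_select} gives the closed form $v_k=(s_k+\mu\sum_{i=1}^k\alpha_i x_i)/(\mu A_k+\sigma)$. Combining this with the definition of $\hat v_{k-1}$ and with $s_k-s_{k-1}=-\alpha_k\widetilde\nabla f(x_k)$, the accumulated $\sum\alpha_i x_i$ terms telescope and one is left with $(\mu A_k+\sigma)(v_k-\hat v_{k-1})=-\alpha_k\widetilde\nabla f(x_k)$. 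Squaring and substituting the step-size identity $L\alpha_k^2/A_k=\lambda(\mu A_k+\sigma)$ from step $(\circ)$ yields the clean relation $(\mu A_k+\sigma)\|v_k-\hat v_{k-1}\|^2=\frac{\lambda A_k}{L}\|\widetilde\nabla f(x_k)\|^2$, which holds whether or not $\mu=0$.

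With this identity the fourth term equals $-(1-\lambda)\frac{\lambda}{L}\sum_i A_i\|\widetilde\nabla f(x_i)\|^2$. Taking conditional expectations and using unbiasedness, $\mathbf E[\|\widetilde\nabla f(x_i)\|^2\mid\mathcal F_{i-1}]=\|\nabla f(x_i)\|^2+\mathbf E[\|\xi_i\|^2\mid\mathcal F_{i-1}]$, so the sum of the last three terms is bounded above by $\frac{\lambda}{L}\sum_i A_i\big(\lambda\,\mathbf E[\|\xi_i\|^2]-(1-\lambda)\|\nabla f(x_i)\|^2\big)$. Each summand is non-positive once $\mathbf E[\|\xi_i\|^2]\le\frac{1-\lambda}{\lambda}\|\nabla f(x_i)\|^2$; since $\frac{1-\lambda}{1+\lambda}\le\frac{1-\lambda}{\lambda}$ for $\lambda\in(0,1]$, the hypothesis \eqref{eq:condition_unconstrained} is comfortably sufficient, and the tower property delivers the claim in full expectation.

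I expect the main obstacle to be the telescoping identity for $v_k-\hat v_{k-1}$: one must verify that the $\mu$-weighted prox term in $\varphi_k$ and the matching definition of $\hat v_{k-1}$ cancel the entire $x_i$-history, leaving only the single increment $s_k-s_{k-1}$. Once that is in place the remainder is scalar algebra. The one genuinely delicate point is bookkeeping the constant: the combination above naturally produces the threshold $\frac{1-\lambda}{\lambda}$, and recovering the sharper-looking $\frac{1-\lambda}{1+\lambda}$ stated in \eqref{eq:condition_unconstrained} is a matter of which intermediate estimate one commits to, so I would double-check that step against the constant claimed.
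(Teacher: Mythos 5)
Your proof is correct and follows essentially the same route as the paper's own: the closed form $v_k=(s_k+\mu\sum_{i\le k}\alpha_i x_i)/(\mu A_k+\sigma)$, the telescoping identity $(\mu A_k+\sigma)(v_k-\hat v_{k-1})=-\alpha_k\widetilde\nabla f(x_k)$, the step-size identity $L\alpha_k^2/A_k=\lambda(\mu A_k+\sigma)$, Lemma \ref{th:bound_unbiased} for $N_k$, and the decomposition $\mathbf E[\|\widetilde\nabla f(x_k)\|_2^2]=\|\nabla f(x_k)\|_2^2+\mathbf E[\|\xi_k\|_2^2]$ (your conditioning on $\mathcal F_{i-1}$ is in fact a cleaner justification of the vanishing cross term than the paper's appeal to independence). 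On the constant you flagged: the paper's appendix carries the factor $\tfrac12$ inherited from the term $-\frac{\mu A_i+\sigma}{2}\|v_i-\hat v_{i-1}\|^2$ in Lemma \ref{th:dual_averaging_bound} (Eq.\ \eqref{eq:accelerated_bound} as printed drops this $\tfrac12$ on the fourth term), so its combined summand is $\frac{1+\lambda}{2}\mathbf E[\|\xi_i\|_2^2]-\frac{1-\lambda}{2}\|\nabla f(x_i)\|_2^2$ and the threshold comes out exactly $\frac{1-\lambda}{1+\lambda}$; your $\frac{1-\lambda}{\lambda}$ arises from taking Eq.\ \eqref{eq:accelerated_bound} at face value, and since $\frac{1-\lambda}{1+\lambda}\le\frac{1-\lambda}{\lambda}$ for $\lambda\in(0,1]$, your argument still proves the corollary as stated (and reproduces the paper's constant verbatim once the $\tfrac12$ is restored).
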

The condition now is that the variance is proportional to the magnitude of the exact gradients, which is analogous to the strong-growth condition \cite{SR13}. In short, since $\xi_k = \widetilde \nabla f(x_k) - \nabla f(x_k) $, we have $\mathbf E [ \| \xi_k \|_2^2 ] =  \mathbf E [ \| \widetilde \nabla f(x_k) \|_2^2] -   \| \nabla f(x_k) \|_2^2  $, and so Eq.\ \eqref{eq:condition_unconstrained} can be rewritten as  $\mathbf E [ \| \widetilde \nabla f(x_k) \|_2^2 ] \le \frac{2}{(1+\lambda)}  \| \nabla f(x_k) \|_2^2$. Hence, by making $\lambda$ smaller, we can increase the robustness to stochastic gradients  up to twice the magnitude of the exact gradient. The 2-factor is because how we defined $\widetilde \nabla f(x_k)$ in Eq.\ \eqref{eq:noisemodel}. If we had instead defined $\widetilde \nabla f(x_k) = \beta \nabla f(x_k) +  \xi_k$ with $\beta > 0$, the condition in Eq.\ \eqref{eq:condition_unconstrained} would be equivalent to $
\mathbf E [ \| \widetilde \nabla f(x_k) \|_2^2 ] \le  \frac{2\beta^2}{(1+\lambda)} \| \nabla f(x_k)\|_2^2$, which is exactly the strong-growth condition. See, for example, \cite[Eq.\ (1)]{VBS19}, where parameter $\rho$ in \cite{VBS19}  corresponds to ${2\beta^2}/{(1+\lambda)}$.


\section{Applications}
\label{sec:applications}

This section shows how we can map the condition in Lemma \ref{th:prop_constrained} and Corollary \ref{th:prop_unconstrained} to two applications.

\subsection{Finite-sum problems} 
\label{sec:finitesums}

This application consists of minimizing $f(x) =\sum_{l=1}^m f_l(x)$, where $f_l(x)$ is the loss function for data sample $l \in \{1,\dots,m\}$. Each $f_l(x)$ is ($\mu$-strongly) convex and has $\frac{L}{m}$-Lipschitz continuous gradient. When the number of data samples $m$ is large, the computation of a gradient $\nabla f(x) =  \sum_{l=1}^m \nabla f_l(x)$ is expensive as it involves computing $m$ ``data-sample'' gradients. The classic approach to reducing the computation burden is to estimate a gradient using a subset of $b$ samples. That is, ${\widetilde \nabla f(x_k)} = \frac{m}{b} \sum_{j \in J_k} \nabla f_j(x_k)$ where  $b \coloneqq |J_k|$ and  $J_k \subseteq \{1,\dots,m\}$ is a collection on indexes  selected uniformly at random. Such approach ensures that $\mathbf E_{J_k}[ \widetilde \nabla f(x_k) ] = \nabla f(x_k)$, but not that the $\mathbf E[\| \xi_k \|_*^2]$ vanishes with $k$ or that this is proportional to the difference of the gradients. 

\emph{Variance-reduced} oracles (e.g., SAGA \cite{DBL14}, SVRG \cite{JZ13}) add a ``memory'' term  to the mini-batch gradient to allow the variance to vanish. For instance, \textsc{SAGA} estimates gradients as follows. It sets $x_0 \in C$ and $\psi_0^l = x_0$ for all $l \in \{ 1,\dots,m \}$ as initial points, and then, in each iteration $k \ge 1$, it (i) selects a collection of indexes $J_k \subseteq \{1,\dots,n\}$ uniformly at random, (ii) sets $\psi_k^j = x_k$ for all $j \in J_k$, and (iii) computes 
\begin{align}
\widetilde \nabla f(x_k)  & = \frac{m}{b}\sum_{j \in J_k }  \nabla f_j (x_k) - \frac{m}{b} \sum_{j \in J_k } \nabla f_j (\underbrace{\psi_{k-1}^j}_{\approx x_{k-1}})  + \sum_{l=1}^m \nabla f_l(\underbrace{\psi_{k-1}^l}_{\approx x_{k-1}}).
\label{eq:saga_update}
\end{align}
That is, $\widetilde \nabla f(x_k)$ is estimated using ``all'' data samples, but only few of them are computed with the latest point $x_k$. 
Note that the estimated gradient in Eq.\ \eqref{eq:saga_update} is unbiased (i.e., $\mathbf E_{J_k} [ \widetilde \nabla f(x_k) ] = \nabla f(x_k)$), but also that 
\begin{align*}
\| \xi_k \|_*^2 & = \| \widetilde \nabla f(x_k) - \nabla f(x_k) \|_*^2  \\
& = \Bigg\|  
 \Bigg( \underbrace{\sum_{l=1}^m \nabla f_l (x_k)}_{ = \nabla f(x_k)}  -  \underbrace{\sum_{l=1}^m \nabla f_l({\psi_{k-1}^l})}_{\approx \nabla f(x_{k-1})}
 \Bigg)
 -  \frac{m}{b}  \Bigg( \underbrace{\sum_{j \in J_k }  \nabla f_j (x_k)}_{\approx \nabla f(x_{k})} - \underbrace{\sum_{j \in J_k } \nabla f_j ({\psi_{k-1}^j})}_{\approx \nabla f(x_{k-1})}  \Bigg)
\Bigg\|_*^2 
\end{align*}
which is very close to the condition in Lemma \ref{th:prop_constrained}. In particular,  $\| \xi_k \|_*^2$  is the difference between the gradient at iteration $k$ and the copy of the gradients in the previous iterations since $\psi_{k-1}^l \in \{x_{k-1},\dots,x_0\}$. We show in the supplementary material (Sec.\ \ref{sec:sup_app_saga}) that Eq.\ \eqref{eq:saga_update} satisfies the condition in Lemma \ref{th:prop_constrained}, but the same strategy can be applied to similar oracles or types of problems. 
The most important point of this section is to show how the update in Eq.\ \eqref{eq:saga_update}---which is difficult to motivate initially---makes indeed sense when we connect it to the variance condition in Lemma \ref{th:prop_constrained}. We believe that such hindsight can be useful to design new types of stochastic oracles. 
To conclude, we want to recall that while satisfying the conditions in Lemma \ref{th:prop_constrained} is enough for retaining the acceleration rates, it is important to select a $\lambda \in (0,1]$ that does not ``shrink'' too much the strong convexity constants (see the discussion after Theorem \ref{th:main_theorem}).

\subsection{Federated optimization}
\label{sec:federatedlearning}

This application minimizes $f(x) \coloneqq \sum_{l=1}^m f_l(x)$ in a distributed manner, where $f_l : \R^n \to \R$ are loss functions owned by different clients.  In particular, a central server collects the clients' gradients $\nabla f_l(x_k)$ and then carries out the updates indicated in Algorithm \ref{al:smooth}. To reduce the communication time, the clients quantize\footnote{Using a stochastic oracle.} the ``local'' gradients with fewer bits than what it would take to encode an exact gradient (i.e., $32/64$ bits per floating point). 

Previous work proposed algorithms where a stochastic oracle has to satisfy the strong-growth condition $\mathbf E [ \| \xi_k\|_*^2 ] \le \omega \left\|  \nabla f(x_k) \right\|^2$ where $\omega \ge 0$ (e.g., \cite[Def.\ 1]{LKQ+20}, \cite[Def.\ 2]{HHH+20} and \cite[Sec.\ 1.1]{QRZ20}), which is satisfied by multiple gradient compression techniques. For instance, natural compression \cite{HHH+20}, random sparsification, and random dithering \cite{AGL+17}. 
Our result in Lemma \ref{th:prop_constrained} is important since it provides a new condition to design oracles that can quantize gradients. In particular, the condition in Lemma \ref{th:prop_constrained} suggests that quantization can be carried out by taking into account the information sent in the previous iterations, which is a hindsight not available before.
Furthermore, recall that the condition in Lemma \ref{th:prop_constrained} is more general than the strong-growth condition as it allows us to train machine learning models with constraints. 

\begin{figure*}[t!]
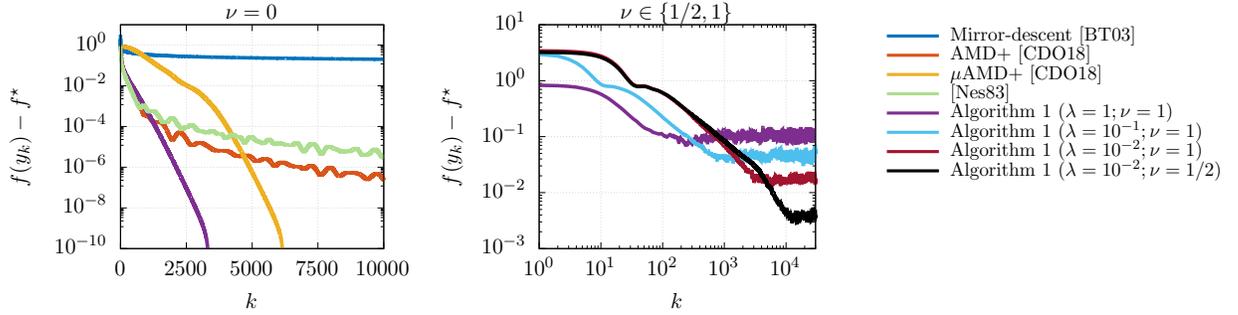

\centering
{\resizebox{0.34\textwidth}{!}{\input{Figures/exp1.tex}}} 
{\resizebox{0.34\textwidth}{!}{\input{Figures/exp1b.tex}}}
{\resizebox{0.2\textwidth}{!}{\input{Figures/key.tex}}} 
\caption{Least squares with noisy gradients: $\widetilde \nabla f(x_k) = \nabla f(x_k) + \xi_k$ where $\xi_k \sim \mathcal N(0,\nu)$ with $\nu \in \{0,1/2,1\}$. The plots are the average of $50$ realizations.}
\label{fig:least_squares}
\end{figure*}

\section{Numerical Experiments}
\label{sec:experiments}

We illustrate Algorithm \ref{al:smooth} with three numerical experiments: (i) a least squares problem where gradients are affected by gaussian noise, (ii) a federated logistic regression problem with compressed gradients (Appendix; Sec.\ \ref{sec:app_federated}), and (iii) a finite-sum problem (Appendix; Sec.\ \ref{sec:app_saga}). 

\subsection{Least squares with noisy gradients}
\label{sec:least_squares}
The goal of this experiment is to show (i) the two-phase behavior of Algorithm \ref{al:smooth} when the objective is strongly convex, and (ii) how parameter $\lambda$ affects the algorithm's convergence/robustness. The cost function is $f(x) = \frac{1}{2}\| Ax- b \|_2^2$ where $A\in \R^{n\times n}$ and $b \in \R^n$ are generated uniformly at random with $n = 50$. 
We compare Algorithm \ref{al:smooth} against Mirror-descent \cite{BT03}, Nesterov's original accelerated algorithm \cite{Nes83}, and \textsc{AMD+}, \textsc{$\mu$AMD+} \cite{CDO18}. Algorithms \cite{Nes83} and \textsc{AMD+} are in the domain of smooth convex optimization, but we include them to illustrate the behavior of Algorithm \ref{al:smooth} during the first iterations.

Figure \ref{fig:least_squares}a (noiseless gradients) shows how Algorithm \ref{al:smooth} ``tracks'' the smooth optimization algorithms \cite{Nes83}, \textsc{AMD+} and then  it ``switches'' to linear rate. This unique behavior is due to the choice of weights in our algorithm (see Eq.\ \eqref{eq:A_k_lowerbound} and the discussion after Theorem \ref{th:main_theorem}). Also, compare Algorithm \ref{al:smooth} with \textsc{$\mu$AMD+}, which has linear convergence but a slow start since it uses the same condition number throughout all iterations. 
Figure \ref{fig:least_squares}b shows the convergence of Algorithm \ref{al:smooth} for different values of $\lambda$ when gradients are stochastic and the noise has unit variance. Observe that by making $\lambda$ smaller (i.e., by ``shrinking'' the condition number), we are increasing the algorithm's robustness. In particular, Algorithm \ref{al:smooth} converges at a linear rate until the noise in the gradients is large enough to affect convergence. Conversely, observe that when the noise variance is smaller ($\nu = 1/2$), Algorithm \ref{al:smooth} converges to a better approximate solution, which is the expected behavior from reducing the variance of the noise in the gradients. 

%


%


\clearpage

\clearpage
\bibliography{references.bib}
\bibliographystyle{alpha}

\clearpage

\clearpage

\appendix

\clearpage
\onecolumn
\section*{Supplementary Material}


\section{Background and related work}
\label{sec:relatedwork}
\textbf{Background.} Accelerated first-order methods (FOMs) for smooth convex optimization were discovered by Nesterov in 1983 \cite{Nes83}, and later extended to strongly convex objectives (e.g., \cite[Sec.\ 2.2]{Nes04}). Despite their superior rates,  
accelerated methods have been historically less employed in practice due to their high sensitivity to inexact gradients and unintuitive method of proof (see \cite{Bub13,SBC14,WWJ16}).\footnote{Compared to descent methods.} The interest in accelerated FOMs methods resurged with large-scale and distributed optimization problems where higher-order methods are impractical despite having better convergence rates.

\textbf{Related work.} There are two lines of work on accelerated first-order optimization with stochastic gradients. The ``black-box'' approaches (e.g., \cite{Asp08,Lin10, GL12,Dev13, CDO18}), which regard an oracle as a  subroutine that cannot be controlled,\footnote{For example, when gradients are estimated with  \emph{gradient-free} techniques \cite[Sec.\ 3]{VGS21} or obtained by solving an auxiliary convex problem \cite{BNO+14}. } and the ``application specific'' approaches (e.g., \cite{All17,DES20,LKQ+20}), which tie the FOMs  with the subroutines that estimate gradients. The underlying mathematical problems are nonetheless the same.

Our work is  close technically to the ``black-box'' approaches  \cite{Lin10, Dev13, CDO18}, which also propose algorithms based on dual-averaging. However, instead of studying how to slow down the learning process to ensure convergence,\footnote{Those works assume that the variance of the stochastic gradients is fixed or uniformly bounded from below.} we study the conditions that the estimated gradients should satisfy to retain the accelerated rates.\footnote{Recall that we can do this as we assume that we have access to the subroutine that estimates gradients, such as in the ``application specific'' approaches.} 
Regarding the ``application specific'' methods, the two main applications are (i) finite-sums with large datasets \cite{All17,ZSC18,ZDS+19,DES20,SJM20} and (ii) distributed learning with compressed gradients \cite{LKQ+20,QRZ20}. 
The works for both applications use oracles proposed for non-accelerated algorithms (e.g., SAGA \cite{DBL14}, SVRG \cite{JZ13}), and show how they can employ them with new acceleration methods.
The work that is conceptually closer to us is \cite{DES20}, which shows how different stochastic oracles for finite-sum problems (i.e., SAGA, SVRG, SARAH \cite{NLS+17}) can be used by a single FOM. However, the condition used \cite[Def.\ 1]{DES20} is tailored to finite-sum problems, and it is not clear how this can be extended to other problems, e.g., distributed learning with compressed gradients. 


\section{Preliminaries}
\label{sec:preliminaries}

This section presents a concise review of some key convex duality results that we will use to prove the technical content. All the results are well-known, and we include the proofs for completeness.

\begin{definition}[Convex conjugate] 
\label{de:convex_conjugate}
\begin{align*}
f^*(x^*) \coloneqq \sup_{x \in C} \{ \langle x^*, x \rangle - f(x) \}
\end{align*}
\end{definition}

The following are some convex conjugate facts. 
\begin{lemma}[Convex conjugate facts] Let $f$ be a convex function on $C$. 
\label{th:conjugate_facts}
\begin{itemize}
\item[(i)] $\nabla f^*(x^*) \in \arg \max_{u \in C} \{ \langle x^*, u \rangle - f(u) \}$ 
\item [(ii)] $f^*(x^*)  =  \langle x^*, \nabla f^*(x^*)   \rangle - f(\nabla f^*(x^*))$ 
\item [(iii)] $x^* = \nabla f(\nabla f^*(x^*))$
\end{itemize}
\end{lemma}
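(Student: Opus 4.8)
The plan is to prove the three facts in order, obtaining (ii) and (iii) as consequences of (i). Throughout I treat $f^*$ as differentiable at the point $x^*$ (implicit in the very notation $\nabla f^*(x^*)$) and assume the supremum defining $f^*(x^*)$ is attained; both are the standard hypotheses under which these identities hold.

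For (i), I would argue directly through the subdifferential rather than invoking Danskin's theorem, so as to keep the proof self-contained. First note that $f^*$, being a pointwise supremum of the affine functions $x^* \mapsto \langle x^*, u \rangle - f(u)$, is convex. Let $\bar u \in \arg\max_{u \in C}\{ \langle x^*, u \rangle - f(u) \}$ be any maximizer, so that $f^*(x^*) = \langle x^*, \bar u \rangle - f(\bar u)$. The key computation is that, for an arbitrary $y^*$, the definition of the supremum gives
\[
f^*(y^*) \ge \langle y^*, \bar u \rangle - f(\bar u) = f^*(x^*) + \langle y^* - x^*, \bar u \rangle,
\]
which is exactly the statement that $\bar u \in \partial f^*(x^*)$. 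Since $f^*$ is differentiable at $x^*$, its subdifferential there is the singleton $\{ \nabla f^*(x^*) \}$, forcing $\bar u = \nabla f^*(x^*)$. Thus every maximizer coincides with $\nabla f^*(x^*)$, which is (i).

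Fact (ii) is then immediate: by (i) the point $\nabla f^*(x^*)$ is itself a maximizer, so substituting $u = \nabla f^*(x^*)$ into the attained value $f^*(x^*) = \langle x^*, u \rangle - f(u)$ yields $f^*(x^*) = \langle x^*, \nabla f^*(x^*) \rangle - f(\nabla f^*(x^*))$.

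For (iii), I would use first-order optimality of the concave objective $u \mapsto \langle x^*, u \rangle - f(u)$ at the maximizer $\bar u = \nabla f^*(x^*)$: stationarity gives $x^* - \nabla f(\bar u) = 0$, i.e.\ $x^* = \nabla f(\nabla f^*(x^*))$. The main subtlety — and the step I expect to need the most care — is precisely this boundary issue: the clean stationarity condition $\nabla f(\bar u) = x^*$ holds when the maximizer lies in the (relative) interior of $C$, as in the unconstrained case $C = \R^n$ relevant to the applications, whereas at a boundary maximizer one only obtains $x^* - \nabla f(\bar u)$ in the normal cone to $C$. I would therefore present (iii) under the interior-maximizer hypothesis, or equivalently read it through the Fenchel--Young equality $f(\bar u) + f^*(x^*) = \langle x^*, \bar u \rangle$, whose equality case characterizes $x^* \in \partial f(\bar u)$ and collapses to $x^* = \nabla f(\bar u)$ when $f$ is differentiable.
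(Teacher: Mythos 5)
Your proof is correct and follows the same skeleton as the paper's own proof---(i) first, then (ii) by substituting the maximizer into the definition of $f^*$, then (iii) by first-order optimality---but it supplies rigor at exactly the two points where the paper's proof is loose. For (i), the paper simply asserts ``therefore $\nabla f^*(x^*) = u$'' for a maximizer $u$; your computation $f^*(y^*) \ge \langle y^*, \bar u \rangle - f(\bar u) = f^*(x^*) + \langle y^* - x^*, \bar u \rangle$, which exhibits $\bar u \in \partial f^*(x^*)$ and then uses that differentiability of $f^*$ at $x^*$ collapses the subdifferential to the singleton $\{\nabla f^*(x^*)\}$, is precisely the justification the paper leaves implicit. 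For (iii), the paper differentiates the right-hand side of (ii) and equates it to zero, which---as you correctly flag---is only valid for an unconstrained or relative-interior maximizer; at a boundary maximizer one only obtains $x^* - \nabla f(\bar u)$ in the normal cone to $C$ at $\bar u$, equivalently $x^*$ is a subgradient of $f$ plus the indicator of $C$ at $\bar u$ via the Fenchel--Young equality case. This caveat is not academic in this paper: the conjugates here are taken with the supremum restricted to a general convex set $C$, and fact (iii) is invoked in that constrained form (for example the identity $\nabla \varphi_{k-1}(\nabla \varphi_{k-1}^*(s_{k-1})) = s_{k-1}$ in the proof of Lemma \ref{th:dual_averaging_bound}, where $v_{k-1}$ maximizes over $C$ and need not be interior), so your interior-maximizer hypothesis, or the subgradient/normal-cone reading, is what one actually needs to make those downstream uses airtight.
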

\begin{proof}
For (i), let $u$ be a vector in $C$ that maximizes $\langle x^*, u \rangle - f(u)$. Then, $f^*(x^*) = \langle x^*, u \rangle - f(u)$ (by Definition \ref{de:convex_conjugate}) and therefore $\nabla f^*(x^*) = u $. 
Using (i) in the definition of convex conjugate (Definition \ref{de:convex_conjugate}) yields (ii). 
Finally, (iii) follows by differentiating the right-hand-side of (ii) with respect to $\nabla f^*(x^*) $ and equating it to zero, i.e., $0  = x^* - \nabla f(\nabla f^*(x^*) )$.
\end{proof}

\begin{lemma}[Fenchel's inequality]
\label{th:fenchel_inequality}
$\langle x, x^* \rangle \le f(x) + f^*(x^*)$.
\end{lemma}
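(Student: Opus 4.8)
The plan is to read the inequality off directly from the definition of the convex conjugate; no machinery beyond Definition \ref{de:convex_conjugate} is required. Recall that $f^*(x^*) = \sup_{u \in C} \{ \langle x^*, u \rangle - f(u) \}$ is a supremum over all $u \in C$, and the governing observation is simply that a supremum dominates every individual member of the family over which it is taken.

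First I would specialize the supremum to the particular point $u = x$. Since $x \in C$, the quantity $\langle x^*, x \rangle - f(x)$ is one of the terms in the set whose supremum defines $f^*(x^*)$, so
\[
f^*(x^*) \ge \langle x^*, x \rangle - f(x).
\]
Then I would rearrange: adding $f(x)$ to both sides yields $\langle x^*, x \rangle \le f(x) + f^*(x^*)$, and by symmetry of the inner product $\langle x, x^* \rangle = \langle x^*, x \rangle$, which is exactly the claimed bound.

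There is essentially no obstacle here, as the statement is an immediate consequence of the defining supremum, requiring only that $x$ lie in the domain $C$ over which the conjugate is formed. In particular, neither convexity of $f$, differentiability, nor the conjugate identities of Lemma \ref{th:conjugate_facts} are needed: Fenchel's inequality is logically more primitive than those results and in fact underlies them.
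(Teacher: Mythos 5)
Your proof is correct and is the standard one-line argument: instantiating the supremum in Definition \ref{de:convex_conjugate} at $u = x$ gives $f^*(x^*) \ge \langle x^*, x \rangle - f(x)$, which rearranges to the claim. The paper states Lemma \ref{th:fenchel_inequality} without proof (evidently regarding it as immediate), and your derivation is exactly the argument it implicitly relies on, including the correct observation that only $x \in C$ is needed and no convexity or differentiability of $f$.
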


\begin{lemma}
\label{th:smooth_strong_duality}
$f$ is $\mu$-strongly convex w.r.t.\ $\| \cdot \|$ $\Longleftrightarrow$ $f^*$ is $1/\mu$-smooth w.r.t.\ $\| \cdot\|_*$.
\end{lemma}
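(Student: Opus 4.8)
The plan is to prove the two implications separately, with the key bridge being the inverse relationship between $\nabla f$ and $\nabla f^*$ supplied by Lemma \ref{th:conjugate_facts}(iii), namely $x^* = \nabla f(\nabla f^*(x^*))$. Throughout, I would assume the regularity needed for the conjugate facts to apply (closed proper convexity, differentiability, and attainment of the sup in Definition \ref{de:convex_conjugate}).

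For the forward implication (strong convexity of $f$ $\Rightarrow$ $1/\mu$-smoothness of $f^*$), I would fix two dual points $x^*,y^*$ and set $u \coloneqq \nabla f^*(x^*)$, $w \coloneqq \nabla f^*(y^*)$, so that $\nabla f(u) = x^*$ and $\nabla f(w) = y^*$ by Lemma \ref{th:conjugate_facts}(iii). Writing the strong-convexity inequality of Definition \ref{def:strong_convexity} twice, once for the pair $(u,w)$ and once for $(w,u)$, and adding yields the monotonicity estimate
\begin{align*}
\langle x^* - y^*, \nabla f^*(x^*) - \nabla f^*(y^*) \rangle \ge \mu \, \| \nabla f^*(x^*) - \nabla f^*(y^*) \|^2 .
\end{align*}
Bounding the left-hand side by the definition of the dual norm, $\langle a,b\rangle \le \|a\|_* \|b\|$, and cancelling one factor of $\|\nabla f^*(x^*) - \nabla f^*(y^*)\|$ (the case of equal gradients being trivial) gives the Lipschitz-gradient bound $\|\nabla f^*(x^*) - \nabla f^*(y^*)\| \le \tfrac{1}{\mu}\|x^* - y^*\|_*$. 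I would then convert this into the smoothness upper bound of Definition \ref{def:smoothness} by integrating $\nabla f^*$ along the segment $t \mapsto x^* + t(y^*-x^*)$, which produces exactly the factor $\tfrac{1}{2\mu}$ in the dual norm.

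For the reverse implication I would exploit biconjugacy, $f = f^{**}$, so that it suffices to establish the companion statement ``$g$ is $L$-smooth $\Rightarrow$ $g^*$ is $1/L$-strongly convex'' and apply it to $g = f^*$ with $L = 1/\mu$. The natural route is through co-coercivity: an $L$-smooth convex function satisfies $\langle \nabla g(a) - \nabla g(b), a-b\rangle \ge \tfrac{1}{L}\|\nabla g(a) - \nabla g(b)\|_*^2$, which I would obtain by applying the smoothness upper bound to the shifted functions $z \mapsto g(z) - \langle \nabla g(a), z\rangle$ and minimizing over $z$. Translating this inequality through the inverse-gradient correspondence, exactly as in the forward direction, gives $\langle \nabla f(x) - \nabla f(y), x-y\rangle \ge \mu\|x-y\|^2$, and a final integration recovers the strong-convexity inequality.

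The main obstacle is the reverse direction: unlike the forward direction, passing from the smoothness upper bound to a usable two-point monotonicity estimate is not immediate from the definition but requires the co-coercivity inequality, whose derivation rests on the ``minimize the quadratic upper bound'' argument. A secondary technical point is that every Cauchy--Schwarz step must be carried out in the primal/dual norm pairing rather than in an inner product, so the bookkeeping of which object is measured in $\|\cdot\|$ versus $\|\cdot\|_*$ must be tracked carefully (gradients of $f^*$ live in the primal space and are measured in $\|\cdot\|$, while $f^*$-smoothness is expressed in $\|\cdot\|_*$).
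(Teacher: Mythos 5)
Your proposal is correct and would compile into a complete proof, but it takes a genuinely different route from the paper's. The paper never passes through monotonicity, Lipschitz continuity of $\nabla f^*$, or co-coercivity. For $\Longrightarrow$, it proves the quadratic upper bound in one algebraic chain: write $f^*(x^*) = \langle x^*, \nabla f^*(x^*)\rangle - f(\nabla f^*(x^*))$ via Lemma \ref{th:conjugate_facts}, insert the strong-convexity lower bound on $f$ at the point $\nabla f^*(y^*)$, and absorb the resulting cross term $\langle x^*-y^*, \nabla f^*(x^*)-\nabla f^*(y^*)\rangle - \tfrac{\mu}{2}\|\nabla f^*(x^*)-\nabla f^*(y^*)\|^2$ by Fenchel's inequality (Lemma \ref{th:fenchel_inequality}), which yields the $\tfrac{1}{2\mu}\|x^*-y^*\|_*^2$ term directly, with no intermediate Lipschitz estimate. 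For $\Longleftarrow$, the paper adds $\langle a, x^*\rangle$ to the smoothness inequality of $f^*$ and maximizes both sides over $x^*$: the left side via the known conjugate identity $\sup_{x^*}\{\langle a,x^*\rangle - \tfrac{1}{2\mu}\|x^*-y^*\|_*^2\} = \langle a,y^*\rangle + \tfrac{\mu}{2}\|a\|^2$, the right side via biconjugacy (used implicitly, where you invoke $f=f^{**}$ explicitly), then substitutes $a = \nabla f^*(x^*)-\nabla f^*(y^*)$ to read off strong convexity. Your ``differential'' route (added two-point inequalities, generalized Cauchy--Schwarz in the correct primal/dual pairing, co-coercivity via minimizing the quadratic upper bound, and integration along segments) is more modular and reuses textbook equivalences, but it needs gradients to exist along entire segments so the fundamental-theorem-of-calculus step applies, and it requires the co-coercivity lemma as an extra ingredient; the paper's ``variational'' route needs only the conjugate facts plus one sup computation and gets the constants without detours. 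One shared gap, at the same level of informality in both arguments: the reverse direction delivers strong convexity only at points of the form $\nabla f^*(x^*)$, and the claim that such points exhaust $C$ (equivalently that $\nabla f^*(\nabla f(x)) = x$ for all $x \in C$, delicate at boundary points of $C$) is asserted rather than proved --- your blanket regularity assumption papers over this exactly as the paper does.
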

\begin{proof}
The proof uses the results in Lemma \ref{th:conjugate_facts}, Lemma \ref{th:fenchel_inequality}, and well-known duality relations; see, for example, \cite[pp.\ 475]{RW98}. 

$\Longrightarrow$
\begin{align*}
f^*(x^*) 
& = \langle x^*, \nabla f^*(x^*) \rangle  - f (\nabla f^*(x^*)) \\
& \le \langle x^*, \nabla f^*(x^*) \rangle - f (\nabla f^*(y^*)) - \langle \nabla f (\nabla f^*(y^*)), \nabla f^*(x^*) - \nabla f^*(y^*) \rangle \\
 & \qquad - \frac{\mu}{2} \| \nabla f^*(x^*) - \nabla f^*(y^*) \|^2 \\
& = \langle x^*, \nabla f^*(x^*) \rangle - f (\nabla f^*(y^*)) - \langle y^*, \nabla f^*(x^*) - \nabla f^*(y^*) \rangle - \frac{\mu}{2} \| \nabla f^*(x^*) - \nabla f^*(y^*) \|^2 \\
& = \langle y^*, \nabla f^*(y^*) \rangle - f (\nabla f^*(y^*)) + \langle x^* - y^*, \nabla f^*(x^*) \rangle - \frac{\mu}{2} \| \nabla f^*(x^*) - \nabla f^*(y^*) \|^2 \\
& = f^*(y^*) + \langle x^* -y^*, \nabla f^*(x^*) \rangle   - \frac{\mu}{2} \| \nabla f^*(x^*) - \nabla f^*(y^*) \|^2 \\
& = f^*(y^*) + \langle x^* -y^*, \nabla f^*(y^*) \rangle  + \langle x^* -y^*, \nabla f^*(x^*) - \nabla f^*(y^*) \rangle  \\
& \qquad  - \frac{\mu}{2} \| \nabla f^*(x^*) - \nabla f^*(y^*) \|^2 \\
& \le f^*(y^*) + \langle \nabla f^*(y^*) , x^* -y^* \rangle  +  \frac{1}{2 \mu} \| x^* -y^* \|_*^2 
\end{align*}
where the first inequality follows since $f$ is $\mu$-strongly convex (by assumption), and the last inequality by Fenchel's inequality (Lemma \ref{th:fenchel_inequality}). The first and second equalities follow by Lemma \ref{th:conjugate_facts}, the third by rearranging terms, the fourth by Lemma \ref{th:conjugate_facts}, and the fifth by adding $0 = \langle x^* -y^*, \nabla f^*(y^*) - \nabla f^*(y^*) \rangle$ to the right-hand-side and rearranging terms.

$\Longleftarrow$

Since $f^*$ is $1/\mu$-smooth (see $\Longrightarrow$), 
\[
f^*(x^*)  \le f^*(y^*) + \langle \nabla f^*(y^*) , x^* -y^* \rangle  +  \frac{1}{2 \mu} \| x^* -y^* \|_*^2 
\]
Add  $\langle a, x^* \rangle $ to both sides and rearrange terms
\begin{align*}
\langle a, x^* \rangle - \frac{1}{2 \mu} \| x^* -y^* \|_*^2 & \le \langle \nabla f^*(y^*) + a, x^* \rangle  - f^*(x^*) + f^*(y^*) - \langle \nabla f^*(y^*) , y^* \rangle  \\
& = \langle \nabla f^*(y^*) + a, x^* \rangle  - f^*(x^*) -f( \nabla f^*(y^*) )
\end{align*}
where the last equality follows by Lemma \ref{th:conjugate_facts}.
The equation above holds for any $x^*$ and $y^*$. Maximize the left-hand-side and right-hand-side of the last equation w.r.t.\ $x^*$ to obtain
\begin{align*}
\langle a, y^* \rangle + \frac{\mu}{2 } \| a \|^2 & \le f( \nabla f^*(y^*) + a) -f( \nabla f^*(y^*) )
\end{align*}
See \cite[pp.\ 475; Eq.\ 11(3)]{RW98} for the left-hand-side. Rearranging terms yields
\begin{align*}
f( \nabla f^*(y^*) + a)  \ge f( \nabla f^*(y^*) ) + \langle a, y^* \rangle + \frac{\mu}{2} \| a \|^2 
\end{align*}

Finally, let $a = \nabla f^*(x^*) - \nabla f^*(y^*)$ and use the fact that the fact that $y^* =  \nabla f(\nabla f^*(y^*))$ to obtain
\[
f(\nabla f^*(x^*))  \ge f(\nabla f^*(y^*)) + \langle \nabla f(\nabla f^*(y^*)) , \nabla f^*(x^*) - \nabla f^*(y^*) \rangle + \frac{\mu}{2} \| \nabla f^*(x^*) - \nabla f^*(y^*) \|^2
\]
which holds for any  $\nabla f^*(x^*), \nabla f^*(y^*)$ --- recall that $\nabla f^*(x^*), \nabla f^*(y^*) \in C$. 
\end{proof}

\section{Proofs of the Main Results}
\label{eq:main_proof_theorem}

\subsection{Proof of Lemma \ref{th:dual_averaging_bound}}

The proof is divided into two parts. We first present the following lemma. 

\begin{lemma}
\label{th:general_upper_bound}
Let $f : C \to \R^n$ be a $L$-smooth and ($\mu$-strongly) convex function and $C$ a convex set. Suppose Assumption \ref{as:norm} holds and select a $y_0 \in C$ and set $A_0 = 0$. For any sequences $\{x_i \in C\}_{i=1}^k$, $\{y_i \in C\}_{i=1}^k$,  $\{ \alpha_i \ge 0 \}_{i=1}^k$, it holds:
 \begin{align}
 A_k (f(y_k) - f^\star) &    \le 
 N_k  + \sum_{i=1}^k    \langle  \widetilde \nabla f(x_i),  A_i y_i - A_{i-1} y_{i-1} - \alpha_i y^\star \rangle  + \frac{L}{2} \sum_{i=1}^k  A_i   \| y_i - x_i \|^2 \notag \\
& \qquad - \frac{\mu}{2} \sum_{i=1}^k {\alpha_i}\| x_i - y^\star \|^2  - \frac{1}{2L} \sum_{i=1}^k A_{i-1} \| \nabla f(x_i) - \nabla f( y_{i-1}) \|_*^2 \notag
\end{align}
where $N_k \coloneqq - \sum_{i=1}^k    \langle \xi_i,  A_i y_i - A_{i-1} y_{i-1} - \alpha_i y^\star  \rangle$. 
\end{lemma}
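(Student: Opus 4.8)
The plan is to establish the inequality in Lemma~\ref{th:general_upper_bound} by combining three standard ingredients: the smoothness inequality applied at each search point $x_i$, the telescoping structure induced by the weights $\alpha_i = A_i - A_{i-1}$, and a convexity/strong-convexity lower bound evaluated at the optimum $y^\star$. The key observation is that since $\widetilde\nabla f(x_i) = \nabla f(x_i) + \xi_i$, every inner product against $\widetilde\nabla f(x_i)$ splits into an exact-gradient part plus a noise part, and collecting all the noise parts is precisely what produces the term $N_k$.

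First I would start from the $L$-smoothness of $f$ (Definition~\ref{def:smoothness}) applied with the pair $(x_i, y_i)$ to bound $f(y_i) \le f(x_i) + \langle \nabla f(x_i), y_i - x_i \rangle + \frac{L}{2}\|y_i - x_i\|^2$, multiply by $\alpha_i$, and sum over $i$. To turn the left-hand side into the telescoping quantity $A_k f(y_k)$, I would use the weighted-telescoping identity $\sum_{i=1}^k \alpha_i f(y_i) \le A_k f(y_k) - \sum_{i=1}^k A_{i-1}\bigl(f(y_i)-f(y_{i-1})\bigr)$ type manipulation; more directly, since $A_k f(y_k) - A_0 f(y_0) = \sum_i (A_i f(y_i) - A_{i-1}f(y_{i-1}))$ and $A_0 = 0$, I would rewrite $A_i y_i - A_{i-1} y_{i-1}$ as the natural ``increment'' that appears in the statement. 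This is exactly why the inner product $\langle \widetilde\nabla f(x_i), A_i y_i - A_{i-1} y_{i-1} - \alpha_i y^\star\rangle$ shows up, with the $-\alpha_i y^\star$ piece coming from the optimality comparison described next.

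Next I would introduce the comparison with $y^\star$. Using ($\mu$-strong) convexity (Definition~\ref{def:strong_convexity}) at the pair $(x_i, y^\star)$ gives $f^\star = f(y^\star) \ge f(x_i) + \langle \nabla f(x_i), y^\star - x_i\rangle + \frac{\mu}{2}\|x_i - y^\star\|^2$, which after weighting by $\alpha_i$ and summing produces both the $-\alpha_i y^\star$ terms inside the inner products and the $-\frac{\mu}{2}\sum_i \alpha_i \|x_i - y^\star\|^2$ term in the bound. Subtracting $A_k f^\star = \sum_i \alpha_i f^\star$ from the smoothness sum is what manufactures the left-hand side $A_k(f(y_k) - f^\star)$ and cancels the stray $f(x_i)$ terms. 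The term $-\frac{1}{2L}\sum_i A_{i-1}\|\nabla f(x_i) - \nabla f(y_{i-1})\|_*^2$ I expect to extract from a refined use of smoothness between consecutive iterates: specifically, applying the co-coercivity/quadratic-lower-bound consequence of $L$-smoothness (equivalently, the $\frac{1}{2L}\|\nabla f(x_i) - \nabla f(y_{i-1})\|_*^2$ gap in Fenchel-type duality, Lemma~\ref{th:smooth_strong_duality}) at the pair $(x_i, y_{i-1})$, weighted by $A_{i-1}$. Finally, replacing $\nabla f(x_i)$ by $\widetilde\nabla f(x_i) - \xi_i$ throughout collects the noise contributions into $N_k = -\sum_i \langle \xi_i, A_i y_i - A_{i-1}y_{i-1} - \alpha_i y^\star\rangle$.

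\textbf{The main obstacle} will be bookkeeping the telescoping and the extraction of the $\frac{1}{2L}\|\nabla f(x_i)-\nabla f(y_{i-1})\|_*^2$ term simultaneously: the smoothness inequality naturally compares $y_i$ to $x_i$, but the ``negative'' quadratic term in the statement involves $y_{i-1}$, so I expect to need a strengthened smoothness estimate (the one that keeps the squared-gradient-difference remainder rather than discarding it) to match the target expression term-for-term. Getting the weights $A_i$ versus $A_{i-1}$ attached to the right quadratics, and confirming that no extra cross terms survive after the telescope, is the delicate part; everything else is a direct combination of Definitions~\ref{def:smoothness}--\ref{def:strong_convexity} with the duality facts in the Preliminaries.
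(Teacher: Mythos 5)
Your plan is correct and is essentially the paper's own argument: the paper combines, per iteration, the $A_i$-weighted smoothness bound at the pair $(x_i,y_i)$, the $\alpha_i$-weighted ($\mu$-strong) convexity bound at $(x_i,y^\star)$, and then absorbs the leftover $A_{i-1}f(x_i)$ via exactly the refined inequality you identify, $f(y_{i-1}) \ge f(x_i) + \langle \nabla f(x_i), y_{i-1}-x_i\rangle + \frac{1}{2L}\|\nabla f(x_i)-\nabla f(y_{i-1})\|_*^2$, which it derives from the $1/L$-strong convexity of $f^*$ (Lemma~\ref{th:smooth_strong_duality}), before telescoping and inserting $0=\sum_{i=1}^k\langle \xi_i-\xi_i,\cdot\rangle$ to collect $N_k$. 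The one slip is your initial suggestion to weight the smoothness inequality at $(x_i,y_i)$ by $\alpha_i$: it must be weighted by $A_i$ (as your own term-for-term matching with $\frac{L}{2}\sum_{i=1}^k A_i\|y_i-x_i\|^2$ already forces), and with that fixed your outline reproduces the paper's proof.
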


\begin{proof}

%

Since $f$ is $L$-smooth and $A_k \ge 0$, 
\[
A _k f(y_k) \le A _k \left(f(x_k) + \langle \nabla f(x_k) , y_k - x_k \rangle  + \frac{L}{2}  \| y_k - x_k \|^2 \right)
\]
Similarly, by the ($\mu$-strong) convexity of $f$, 
\[
- \alpha_k f^\star  \le -  \alpha_k \left( f(x_k) +  \langle \nabla f(x_k), y^\star - x_k \rangle 
 + \frac{\mu}{2}  \| y^\star - x_k \|^2 \right).
\]
Combining the bounds and adding $-A_{k-1} f^\star$ to both sides yields
\begin{align}
A _k f(y_k) - A_k f^\star &  \le A _{k-1} f(x_k) - A_{k-1} f^\star + \langle \nabla f(x_k) , A_k y_k -  A_{k-1} x_k - \alpha_k y^\star \rangle \notag \\
& \quad + \frac{L}{2} A_k  \| y_k - x_k \|^2  - \frac{\mu}{2}  \alpha_k \| y^\star - x_k \|^2
\label{eq:first_up_bound_lemz}
\end{align}

We proceed to upper bound $A_{k-1} f(x_k)$.  Observe
\begin{align}
f(x_k) 
& = \langle x_k, \nabla f(x_k) \rangle - f^*(\nabla f(x_{k})) \notag \\
& \le \langle x_{k}, \nabla f( x_{k}) \rangle - f^*(\nabla f(y_{k-1})) - \langle y_{k-1},  \nabla f(x_k) - \nabla f( y_{k-1})  \rangle \notag \\
& \quad - \frac{1}{2L} \| \nabla f(x_k) - \nabla f( y_{k-1}) \|_*^2 \notag \\
& = f(y_{k-1}) + \langle x_{k}  - y_{k-1}, \nabla f( x_k ) \rangle  - \frac{1}{2L} \| \nabla f(x_k) - \nabla f( y_{k-1}) \|_*^2 \label{eq:coercivity_up}
\end{align}
where the first equality follows by the definition of convex conjugate, the second inequality since $f^*$ is $1/L$-strongly convex, and the last equation again by the definition of the convex conjugate. 
Multiplying the last equation across by $A_{k-1}$ and using the bound in Eq.\ \eqref{eq:first_up_bound_lemz}
\begin{align*}
A _k (f(y_k) -  f^\star) &  \le A _{k-1}( f(y_{k-1}) - f^\star) + \langle \nabla f(x_k) , A_k y_k -  A_{k-1} y_{k-1} - \alpha_k y^\star \rangle  \\
& \quad + \frac{L}{2} A_k  \| y_k - x_k \|^2  - \frac{\mu}{2}  \alpha_k \| y^\star - x_k \|^2 - \frac{A_{k-1}}{2L} \| \nabla f(x_k) - \nabla f( y_{k-1}) \|_*^2 
\end{align*}
Apply the argument recursively from $i=1,\dots,k$ and add $0 =  \sum_{i=1}^k \langle \xi_i - \xi_i , A_i y_i - A_i y_{i-1} - \alpha_i y^\star \rangle $ to the right-hand-side to obtain the result. 
\end{proof}

The second part of the proof consists of applying Nesterov's dual-averaging recursion. Observe that we can write the bound in Lemma  \ref{th:general_upper_bound} as
 \begin{align}
 A_k (f(y_k) - f^\star) &    \le 
\phi(y^\star) +  N_k  + \sum_{i=1}^k    \langle  \widetilde \nabla f(x_i),  A_i y_i - A_{i-1} y_{i-1} \rangle  + \frac{L}{2} \sum_{i=1}^k  A_i   \| y_i - x_i \|^2 \notag \\
& \qquad   - \frac{1}{2L} \sum_{i=1}^k A_{i-1} \| \nabla f(x_i) - \nabla f( y_{i-1}) \|_*^2 + \langle s_k, y^\star \rangle  -\varphi_k(y^\star) \notag
\label{eq:general_upper_bound}
\end{align}
where $\varphi_k (u) = \phi(u) + \frac{\mu}{2} \sum_{i=1}^k \alpha_i \| u - x_i \|^2$ --- recall that $\varphi_k (u)$ is $(\mu A_k + \sigma)$-strongly convex. 

The proof's strategy is inspired on the proof of \cite[Theorem 1]{Nes09} and consists of upper bounding $\langle s_k, y^\star \rangle  -\varphi_k(y^\star)$. From the definition of convex conjugate, we have that
\begin{align*}
&  \langle s_k , y^\star \rangle - \varphi_k(y^\star)    \le \sup_{u \in C}  \{ \langle s_k , u \rangle - \varphi_k(u) \}  = \varphi^*_k(s_k) 
\end{align*}
Next, let $v_k \in \arg\max_{u \in C} \{ \langle s_k, u \rangle - \varphi_k(u) \}$ and observe
\begin{align*}
 \varphi^*_k(s_k) 
&  = \langle s_k, v_k \rangle -\varphi_k(v_k) \\
&  = \langle s_k,v _k \rangle  - \varphi_{k-1} (v_k) - \frac{\mu}{2} \alpha_k \| v_k - x_k \|^2  \\
&  \le \langle s_k,v _k \rangle  - \varphi_{k-1} (v_{k-1}) - \langle \nabla \varphi_{k-1} (v_{k-1}) , v_{k} - v_{k-1} \rangle \\
& \qquad - \frac{\mu A_{k-1} + \sigma}{2} \| v_k - v_{k-1} \|^2  - \frac{\mu}{2} \alpha_k \| v_k - x_k \|^2  \\
&  \le \varphi_{k-1}^*(s_{k-1}) - \langle v_{k}, \alpha_k \widetilde \nabla f(x_k) \rangle - \frac{\mu A_{k} + \sigma}{2} \left\| v_k - \frac{\mu A_{k-1} + \sigma}{\mu A_{k} + \sigma} v_{k-1} - \frac{\mu \alpha_k}{\mu A_{k} + \sigma} x_k \right\|^2 
\end{align*}
where the first inequality follows since $\varphi_{k}$ is $(\mu A_{k} + \sigma)$-strongly convex and the second because:
\begin{align*}
& \langle s_k,v _k \rangle  - \varphi_{k-1} (v_{k-1}) - \langle \nabla \varphi_{k-1} (v_{k-1}) , v_{k} - v_{k-1} \rangle \\
& \qquad =  \langle s_k,v _k \rangle  - \varphi_{k-1} (v_{k-1}) - \langle \nabla \varphi_{k-1} ( \nabla \varphi_{k-1}^* (s_{k-1})) , v_{k} - v_{k-1} \rangle \\
&   \qquad = \langle s_k,v _k \rangle  - \varphi_{k-1} (v_{k-1}) - \langle s_{k-1} , v_{k} - v_{k-1} \rangle \\
&   \qquad = \varphi_{k-1}^* (s_{k-1})  + \langle s_k - s_{k-1},v _k \rangle  \\
&   \qquad = \varphi_{k-1}^* (s_{k-1})  - \langle v _k , \alpha_k \widetilde \nabla f(x_k) \rangle && \text{(by Eq.\ \eqref{eq:v_select})}
\end{align*}
and 
\begin{align*}
& - \frac{\mu A_{k-1} + \sigma}{2} \| v_k - v_{k-1} \|^2  - \frac{\mu}{2} \alpha_k \| v_k - x_k \|^2  \\
& \qquad \le - \frac{\mu A_{k} + \sigma}{2} \left\| v_k - \frac{\mu A_{k-1} + \sigma}{\mu A_{k} + \sigma} v_{k-1} - \frac{\mu \alpha_k}{\mu A_{k} + \sigma} x_k \right\|^2 && \text{(by the convexity of $\| \cdot \|^2$)}\\
& \qquad = - \frac{\mu A_{k} + \sigma}{2} \left\| v_k - \hat v_{k-1} \right\|^2
\end{align*}
Apply the argument above recursively from $i=1,\dots,k$ to obtain
\begin{align*}
\varphi^*_k(s_k)  \le \varphi^*_{0}(s_{0})  - \sum_{i=1}^k \langle \alpha_i \widetilde \nabla f(x_i), v_i \rangle  - \sum_{i=1}^k \frac{\mu A_{i} + \sigma}{2} \left\| v_i - \hat v_{i-1} \right\|^2
\end{align*}
where $\varphi^*_{0}(s_{0}) = \varphi^*_{0}(0) = \sup_{u \in C} - \phi(u) = 0$ since $\phi$ is non-negative by assumption. Plugging the last equation back into the bound of Lemma \ref{th:general_upper_bound} yields the result. 



\subsection{Proof of Theorem \ref{th:main_theorem}}
\label{sec:proof_main_theorem}
We start by presenting the following lemma, which justifies the choice of set $Y_k$.

\begin{lemma}
\label{th:general_y_selection}
For any $x,y,s \in C$ and $L >0$, we have that 
\[
\sup_{\nabla f(x) } \left\{ \min_{y \in C}\left\{ \langle \nabla f(x) , y - s \rangle + \frac{L}{2}\| y -x\|^2 \right\} \right\} \le \frac{L}{2} \| x - s \|^2
\]
\end{lemma}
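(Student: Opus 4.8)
The plan is to exploit that the inner object is a \emph{minimization} over $y \in C$, so it can be bounded from above by the value of the objective at any single feasible point; the right choice of that point makes the bound independent of the gradient, after which the outer supremum is harmless.

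Concretely, I would write $g \coloneqq \nabla f(x)$ and regard it as an arbitrary vector (the supremum ranges over all possible gradient values). Since $s \in C$ is feasible for the inner problem, I would simply evaluate the objective at $y = s$:
\[
\min_{y \in C}\Big\{ \langle g, y-s\rangle + \tfrac{L}{2}\|y-x\|^2 \Big\} \le \langle g, s - s\rangle + \tfrac{L}{2}\|s - x\|^2 = \tfrac{L}{2}\|x-s\|^2 .
\]
The linear term vanishes precisely because the evaluation point coincides with the reference point $s$ appearing in $\langle g, y-s\rangle$. The resulting bound $\tfrac{L}{2}\|x-s\|^2$ carries no dependence on $g$, so taking the supremum over $g = \nabla f(x)$ leaves it unchanged, which is exactly the claim.

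There is essentially no obstacle here: the only thing to be careful about is the order of the operators. Because $s \in C$, the upper bound on the inner \emph{minimum} is immediate and holds uniformly in $g$, so the outer \emph{supremum} cannot spoil it. I would optionally remark that the bound is tight: in the unconstrained case $C = \R^n$ the inner minimizer is $y^\star = x - g/L$, with optimal value $\langle g, x-s\rangle - \tfrac{1}{2L}\|g\|^2$, and maximizing this concave expression over $g$ (attained at $g = L(x-s)$) returns exactly $\tfrac{L}{2}\|x-s\|^2$. This confirms the constant cannot be improved and explains why this quantity is the natural right-hand side used to define the set $Y_k$ in Eq.\ \eqref{eq:setYk}; in particular, substituting $u = \hat y_k$ in turn shows $\hat y_k \in Y_k$, so $Y_k$ is nonempty.
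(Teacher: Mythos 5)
Your proof is correct, and it takes a genuinely more elementary route than the paper's. You bound the inner minimum by evaluating the objective at the feasible point $y = s$, which kills the linear term and yields a bound uniform in $g = \nabla f(x)$, so the outer supremum is harmless. The paper instead sets $\psi(y) = \frac{L}{2}\|y-x\|^2$, rewrites the inner infimum in conjugate form as $\langle -w, s\rangle - \psi^*(-w)$, bounds it by the supremum over $w$, and identifies that supremum as the biconjugate $\psi^{**}(s)$, which equals $\psi(s)$ by Fenchel--Moreau (citing Rockafellar--Wets, Theorem 11.1) since $\psi$ is convex, proper, and lower semicontinuous. The two arguments are duality-twins: your one-line evaluation at $y=s$ is precisely the elementary proof of the easy inequality $\psi^{**}(s) \le \psi(s)$, so you obtain the same conclusion without invoking convexity or semicontinuity of $\psi$ at all, whereas the paper's route deploys machinery whose full strength (the equality $\psi^{**} = \psi$) is not needed for this upper bound. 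What the conjugate formulation buys the paper is consistency with the dual-averaging language used throughout Lemma \ref{th:dual_averaging_bound} and the surrounding analysis; what your version buys is transparency and slightly greater generality. Your tightness remark is also correct and is in fact what the paper's $\sup_w$ computes explicitly in the unconstrained case: the optimal value $\langle g, x-s\rangle - \frac{1}{2L}\|g\|^2$ is maximized at $g = L(x-s)$, returning exactly $\frac{L}{2}\|x-s\|^2$, which confirms the constant in the definition of $Y_k$ in Eq.\ \eqref{eq:setYk} cannot be improved.
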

\begin{proof} Let $w = \nabla f(x)$ and $\psi(y) \coloneqq \frac{L}{2}\| y -x \|^2$. Observe 
\begin{align*}
 \inf_{y \in C} \{ \langle w, y   - s  \rangle + \psi(y) \} 
& = \{ \langle -w, s\rangle -\psi^*(-w)\} \\
& \le \sup_{w} \{ \langle -w, s\rangle -\psi^*(-w)\} \\
& = \psi^{**}(s) \\
&   = \psi(s)
\end{align*}
where the last equation holds since $\psi (\cdot) = \| \cdot \|$ is convex, proper, and lower-semi-continuous; see \cite[Theorem 11.1; pp.\ 474]{RW98}.
\end{proof}

Selecting the sequences $\{x_i \in C\}_{i=1}^k$, $\{y_i\in C\}_{i=1}^k$ as indicated in Algorithm \ref{al:smooth}, the bound in Lemma \ref{th:dual_averaging_bound} can be written as
 \begin{align*}
A_k (f(y_k) - f^\star) & \le  \phi(y^\star) + N_k  - D_k + \frac{1}{2} \sum_{i=1}^k  \left(L \frac{\alpha_i^2}{A_i} - \mu A_{i} - \sigma \right)   \left\|  v_i - \hat v_i   \right\|^2,
\end{align*}
where  $D_k \coloneqq (2L)^{-1} \sum_{i=1}^k A_{i-1} \| \nabla f(x_i) - \nabla f( y_{i-1}) \|_*^2$. We want that the third term in right-hand-side of Eq.\ \eqref{eq:c_bound_acc} is non-positive. For that, we maximize $\alpha_k$ subject to $L \alpha_k^2 - \mu A_{k}^2  - \sigma A_{k} = ( L - \mu ) \alpha_k^2   - (2 \mu A_{k-1} + \sigma)\alpha_k - \mu A_{k-1}^2  - \sigma A_{k-1}   \le 0$. That is, we select
\begin{align}
\alpha_k = \frac{2\mu A_{k-1} + \sigma + \sqrt{(2\mu A_{k-1} + \sigma )^2 + 4 (L-\mu) ( \sigma A_{k-1}  + \mu A^2_{k-1})}}{2(L-\mu)},
\label{eq:quadratic_weight}
\end{align}
and therefore
 \begin{align*}
A_k (f(y_k) - f^\star) & \le  \phi(y^\star) + N_k  - D_k .
\end{align*}
Note that the assumption that $L > \mu$ is necessary because of Eq.\ \eqref{eq:quadratic_weight}.

It remains to lower bound $A_k$. We start when $f$ is not strongly convex (i.e., $\mu = 0$). The choice of $\alpha_k$ satisfies
\[
\alpha^2_k = \frac{\sigma}{L} A_k
\]
Now, we show there exists a sequence $\hat \alpha_k $ such that $\hat \alpha_k^2 \le \frac{\sigma}{L} \hat A_k$ for all $k$ and that  $\hat \alpha_k \le \alpha_k$ and $\hat A_k \le A_k$ for all $k\ge 1$. Let   $\hat \alpha_k = \frac{\sigma}{L}\cdot \frac{(k+1)}{2}$ and observe
\begin{align*}
\hat \alpha_k^2  & = \frac{\sigma^2}{L^2} \cdot \frac{(k+1)^2}{4} \\
& \le \frac{\sigma^2}{L^2} \cdot \frac{(k^2 + 3k)}{4} \\
& = \frac{\sigma^2}{L^2} \cdot \frac{(k+1) k + 2k}{4} \\
& = \frac{\sigma^2}{2L^2} \cdot \frac{(k+1) k}{2} + k \\
& = \frac{\sigma^2}{2L^2 }\sum_{i=1}^k (i+1) \\
& = \frac{\sigma}{L}\sum_{i=1}^k \frac{\sigma}{L} \cdot \frac{(i+1)}{2} \\
& = \frac{\sigma}{L}\sum_{i=1}^k \hat \alpha_k  \\
& = \frac{\sigma}{L} \hat A_k
\end{align*}
The fact that $\hat \alpha_k \le \alpha_k$ follows since $\alpha_k^2 = \frac{\sigma}{L} A_k$ (with equality).  
Finally, since
\begin{align*}
\hat A_k = \frac{\sigma}{2L} \left( \frac{k^2 + 3k}{2} 
\right) = \frac{\sigma}{2L } \left( \frac{(k+1) (k+2)}{2} - 1
\right) 
\end{align*}
and
\begin{align}
\prod_{i=1}^k \left(1 + \frac{2}{i} \right) = \frac{3}{1} \cdot \frac{4}{2} \cdot \frac{5}{3} \cdot \frac{6}{4} \cdot \frac{7}{5} \cdots \frac{(k+1)}{(k-1)} \cdot \frac{(k+2)}{k} = \frac{(k+1)(k+2)}{2}
\label{eq:product_p2}
\end{align}
we obtain $A_k \ge \hat A_k = \frac{\sigma}{2L } \left( \prod_{i=1}^k \left( 1 + \frac{2}{i} \right) - 1
\right) $.

We now consider the strongly convex case. From Eq.\ \eqref{eq:quadratic_weight}, the stated choice of $\alpha_k$ ensures that $\alpha_k \ge \sqrt{\frac{\mu}{L}} A_{k} \ge \sqrt{\frac{\mu}{L}} A_{k-1}$ for all $k\ge1$. And since $\alpha_k  = A_k- A_{k-1}$, 
\[
A_k \ge \left(1 + \sqrt \frac{\mu}{L} \right) A_{k-1}
\]
Applying the argument recursively 
\[
A_k \ge \left(1 + \sqrt \frac{\mu}{L} \right) A_{k-1} \ge \cdots \ge \left(1 + \sqrt \frac{\mu}{L} \right)^{k-1} A_1
\]
Since $A_1 \ge \frac{\sigma}{L}$ by construction, we can lower bound it as follows:
\[
A_1 \ge  \frac{\sigma}{L} = \frac{\sigma}{L} \cdot \frac{(1 + \sqrt \frac{\mu}{L})}{(1 + \sqrt \frac{\mu}{L})} \ge \frac{\sigma}{2L} \left(1 + \sqrt \frac{\mu}{L}\right)
\]
 where the last inequality follows since $ \frac{\mu}{L} \le 1$. Hence, 
\[
A_k \ge \frac{\sigma}{2L}\prod_{i=1}^k \left(1 + \sqrt \frac{\mu}{L} \right)
\]
Finally, combining the two cases 
\begin{align*}
A_k \ge \frac{\sigma}{2L}\prod_{i=1}^k \left(1 + \max \left\{ \frac{2}{i}, \sqrt \frac{\mu}{L} \right\} \right) - \frac{\sigma}{2L}
\end{align*}
which concludes the proof.

\section{Proofs of Section \ref{sec:bounding_nk}}
 
\subsection{Proof of Lemma \ref{th:bound_unbiased}}
Select $y_k = \hat y_k = \frac{A_{k-1}}{A_k} y_{k-1} + \frac{\alpha_k}{A_k} v_{k}$. We have
\begin{align*}
  A_k \left \langle \xi_k ,   \frac{A_{k-1}}{A_k} y_{k-1}  + \frac{\alpha_k}{A_k} y^\star - y_k  \right\rangle  & =   \alpha_k  \left \langle \xi_k ,   y^\star - v_k \right\rangle \\
 & =  \alpha_k  \left \langle \xi_k ,   y^\star - \hat v_k \right\rangle + \alpha_k \langle \xi_k, \hat v_k - v_k \rangle \\
  & \le  \alpha_k  \left \langle \xi_k ,   y^\star - \hat v_k \right\rangle + \alpha_k \|  \xi_k \|_* \| \hat v_k - v_k \| 
 \end{align*}
 where $\hat v_k$ is any vector in $C$. Now, let $\hat v_k \in \arg \max_{u \in C} \{ \langle s_{k-1} - \alpha_k \nabla f(x_k), u \rangle - \varphi_k(u) \} $. Hence, 
 \begin{align*}
 \| \hat v_k - v_k \| & = \left\| \nabla \varphi^*_k \left(s_{k-1} - \alpha_k \nabla f(x_k)  \right) - \nabla \varphi_k^* (s_k) \right\| \\
 & = \left\| \nabla \varphi^*_k \left(s_{k} + \alpha_k \xi_k\right)- \nabla \varphi_k^* (s_k) \right\| \\
 & \le \frac{1}{\mu A_k + \sigma }\| \alpha_k \xi_k \|_*
 \end{align*}
 where the last equation follows since $\varphi^*_k$ has $\frac{1}{\mu A_k + \sigma}$-Lipschitz continuous gradients. Hence, 
 \[
 A_k \left \langle \xi_k ,   \frac{A_{k-1}}{A_k} y_{k-1}  + \frac{\alpha_k}{A_k} y^\star - y_k  \right\rangle  \le \alpha_k  \left \langle \xi_k ,   y^\star - \hat v_k \right\rangle + \frac{\alpha_k^2}{\mu A_k + \sigma }\|  \xi_k \|_*^2
 \] 
 Taking expectations and using the fact that $\xi_k$ is independent of $y^\star$ and $\hat v_k$, we have  
  \[
\mathbf E \left[ A_k \left \langle \xi_k ,   \frac{A_{k-1}}{A_k} y_{k-1}  + \frac{\alpha_k}{A_k} y^\star - y_k  \right\rangle \right]
 \le \frac{\alpha_k^2}{\mu A_k + \sigma } \mathbf E [\|  \xi_k \|^2_*] 
 \] 
 Summing from $i=1,\dots,k$ and using the fact that $ \frac{\alpha_k^2}{\mu A_k + \sigma }  = \lambda A_k$, we obtain the stated result. 

\subsection{Proof of Corollary \ref{th:prop_unconstrained}}
Recall
\begin{align*}
v_k & = \arg \max_{u \in C} \{ \langle s_k, u \rangle - \varphi_k(u) \} \\
&  =  \arg \max_{u \in C} \left\{  \left\langle s_k , u\right\rangle - \frac{1}{2} \| u  \|_2^2 -  \frac{\mu}{2} \sum_{i=1}^k \alpha_i \| u - x_i\|_2^2 \right\}
\end{align*}
Since $C =  \R^n$
\begin{align*}
\nabla_u \left( \left\langle s_k , u\right\rangle - \frac{1}{2} \| u  \|_2^2 -  \frac{\mu}{2} \sum_{i=1}^k \alpha_i \| u - x_i\|_2^2 \right) 
& = s_k - u - \mu \sum_{i=1}^k \alpha_i (u - x_i) \\
&  = s_k - u - \mu A_k u + \mu \sum_{i=1}^k \alpha_i x_i
\end{align*}
Equating the last equation to zero, letting $v_k = u$, and rearranging terms yields
\[
v_k = \frac{s_k + \mu \sum_{i=1}^k \alpha_i x_i}{\mu A_k + \sigma}
\]
where $\sigma = 1$. Hence,
\begin{align*}
v_k - \hat v_{k-1} & = v_k - \frac{\mu A_{k-1} + \sigma}{\mu A_{k} + \sigma} v_{k-1} - \frac{\mu \alpha_k}{\mu A_{k} + \sigma} x_k\\
&  =  \frac{s_k + \mu \sum_{i=1}^k \alpha_i x_i}{\mu A_k + \sigma} - \frac{s_{k-1} + \mu \sum_{i=1}^{k-1} \alpha_i x_i}{\mu A_{k} + \sigma} - \frac{\mu \alpha_k x_k}{\mu A_{k} + \sigma} \\
&  = \frac{s_k - s_{k-1}}{\mu A_k  + \sigma} \\
&  = - \frac{\alpha_k \widetilde \nabla f(x_k)}{\mu A_k + \sigma}
\end{align*}
and therefore
\begin{align}
 A_k (f(y_k) - f^\star)  &  \le \phi(y^\star) + \frac{1}{2}\sum_{i=1}^k  \frac{\alpha_i^2}{\mu A_{i} + \sigma} \left( 2 \mathbf E [ \| \xi_i \|_*^2 ] - (1-\lambda)\left\| \widetilde \nabla f(x_i) \right\|^2  \right)    \notag
 \end{align}
Observe
\begin{align*}
\mathbf E \left[ \left\| \widetilde \nabla f(x_k) \right\|_2^2 \right]
& = \mathbf E \left[  \left\|  \nabla f(x_k) + \xi_k \right\|_2^2 \right] \\
& = \mathbf E \left[  \|  \nabla f(x_k) \|_2^2 + \| \xi_k \|_2^2 + 2 \langle \nabla f(x_k),\xi_k \rangle \right] \\
& =    \left\|  \nabla f(x_k) \right\|_2^2  + \mathbf E \left[ \left\| \xi_k \right\|_2^2 \right]
\end{align*}
where the last inequality follows since $\xi_k$ and $\nabla f(x_k)$ are independent.
Hence, 
\begin{align*}
2 \mathbf E [ \| \xi_k \|_2^2 ] - (1-\lambda)\left\| \widetilde \nabla f(x_k) \right\|_2^2  = (1+\lambda) \mathbf E [ \| \xi_k \|_2^2 ] - (1-\lambda) \| \nabla f(x_k)\|_2^2
\end{align*}
and therefore
\begin{align}
 A_k (f(y_k) - f^\star)  &  \le \phi(y^\star) + \frac{(1+\lambda)}{2}\sum_{i=1}^k  \frac{\alpha_i^2}{\mu A_{i} + \sigma} \left( \mathbf E [ \| \xi_k \|_2^2 ] - \frac{(1-\lambda)}{(1+\lambda)}\left\|  \nabla f(x_k) \right\|_2^2  \right)    \notag
 \end{align}
%
%
%


\subsubsection{Proof of Lemma \ref{th:prop_constrained}}
We start by presenting some lemmas. 

\begin{lemma}
\label{th:squared_norm_upper_bound}
$\| x + y \|^2 \le (1+\gamma^{-1})\| x \|^2 + (1+\gamma) \| y \|^2$ with $\gamma > 0$. 
\end{lemma}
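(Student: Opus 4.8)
The plan is to reduce the claim to a one-dimensional weighted AM--GM inequality. First I would invoke the triangle inequality $\|x+y\| \le \|x\| + \|y\|$, which holds for any norm (in particular for every $\ell_p$ norm with $p \ge 1$ used elsewhere in the paper), and square both sides to obtain
\[
\|x+y\|^2 \le \|x\|^2 + 2\|x\|\,\|y\| + \|y\|^2 .
\]
This replaces the vector inequality by a scalar statement in the nonnegative reals $a := \|x\|$ and $b := \|y\|$, so no inner-product structure is needed and the argument remains valid for a general norm, not just $\ell_2$.

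Second, I would bound the cross term $2ab$ using the elementary weighted Young inequality $2ab \le \gamma^{-1} a^2 + \gamma b^2$, valid for all $\gamma > 0$. This is nothing more than completing the square: it is equivalent to $0 \le \left(\gamma^{-1/2} a - \gamma^{1/2} b\right)^2 = \gamma^{-1} a^2 - 2ab + \gamma b^2$, which holds trivially. Substituting $2\|x\|\,\|y\| \le \gamma^{-1}\|x\|^2 + \gamma\|y\|^2$ into the displayed bound and collecting the coefficients of $\|x\|^2$ and $\|y\|^2$ yields
\[
\|x+y\|^2 \le (1+\gamma^{-1})\|x\|^2 + (1+\gamma)\|y\|^2 ,
\]
which is exactly the claim.

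There is no real obstacle here; the statement is a standard ``Peter--Paul'' splitting of a squared norm with a tunable trade-off parameter $\gamma$. The only point requiring a moment's care is that, because the ambient norm need not come from an inner product, I would route the cross term through the triangle inequality and then apply AM--GM to the scalar magnitudes $\|x\|$ and $\|y\|$, rather than expanding $\langle x, y\rangle$ and appealing to Cauchy--Schwarz; both routes coincide in the $\ell_2$ case, but only the former is valid for a general $\ell_p$ norm. I expect this lemma to be used downstream to split a term such as $\|\widetilde\nabla f(x_k) - \nabla f(x_{k-1})\|^2$ or $\|v_k - \hat v_{k-1}\|^2$ into an exact-gradient contribution and a noise contribution when verifying the variance condition of Lemma \ref{th:prop_constrained}, with $\gamma$ chosen later to balance the two.
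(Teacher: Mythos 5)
Your proof is correct and matches the paper's argument essentially step for step: the paper also first applies the triangle (Minkowski) inequality and then disposes of the cross term $2\|x\|\,\|y\|$ by adding the nonnegative square $\gamma^{-1}\left(\|x\| - \gamma\|y\|\right)^2$, which is exactly your completing-the-square formulation of Young's inequality. Your remark about avoiding inner-product expansions so that the bound holds for general norms is consistent with the paper's use of the lemma for arbitrary dual norms $\|\cdot\|_*$ (e.g., with $\gamma=1$ in Lemma \ref{th:compact_negative_term} and $\gamma = \frac{b}{2m}$ in the SAGA analysis).
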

\begin{proof}

\begin{align*}
\| x + y \|^2 & \le (\| x\| + \| y \|)^2 && \text{(By Minkowski's ineq.)}\\
& = \| x\|^2 + \| y \|^2 + 2 \| x\| \|y \| \\
& \le \| x\|^2 + \| y \|^2 + 2 \| x\| \|y \| + \frac{1}{\gamma} ( \| x \| - \| \gamma y \|)^2 \\
& = \| x\|^2 + \| y \|^2 + 2 \| x\| \|y \| + \frac{1}{\gamma} \| x \|^2 + \gamma \| y \|^2 - 2  \| x \| \| y \| \\
& = (1+ \gamma^{-1}) \| x\|^2 + (1 + \gamma) \| y \|^2 
\end{align*}
\end{proof}

In the following, we consider the case where the objective is the sum of loss functions, as this will allow us to streamline the results in Sec.\ \ref{sec:sup_app_saga}. That is, we have
\[
f(x) = \sum_{l=1}^m f_l(x)
\]
where $f_l$ has $\frac{L}{m}$-Lipschitz continuous gradient. Note that  $f$ is $L$-smooth.

\begin{lemma} \label{th:intermediate_lemma_lm}
Let $f(x) = \sum_{l=1}^m f_l(x)$ where each $f_l$ has $\frac{L}{m}$-Lipschitz continuous gradient. Select $\lambda \le \frac{1}{m+1}$.  Then, 
\begin{align*}
& - (1-\lambda) \frac{(\mu A_{k} + \sigma)}{2} \left\| v_k - \frac{\mu A_{k-1} + \sigma}{\mu A_{k} + \sigma} v_{k-1} - \frac{\mu \alpha_k}{\mu A_{k} + \sigma} x_k \right\|^2 \\
& \qquad \le - \frac{mA_{k} }{2L}  \sum_{l=1}^m \| \nabla f_l (x_k) - \nabla f_l (y_k) \|_*^2  
\end{align*}
\end{lemma}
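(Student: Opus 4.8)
The plan is to reduce the statement to a single scalar comparison between $\|v_k - \hat v_{k-1}\|^2$ and $\|x_k - y_k\|^2$, exploiting the geometry already built into the choices of $x_k$ and $\alpha_k$. Concretely, I would work in the setting $y_k = \hat y_k \in Y_k$ used throughout Section~\ref{sec:bounding_nk}, so that the relation recorded right after Eq.~\eqref{eq:x_select}, namely
\[
\|x_k - y_k\|^2 = \|\hat y_k - x_k\|^2 \le \frac{\alpha_k^2}{A_k^2}\,\|v_k - \hat v_{k-1}\|^2,
\]
is available for free. (In the purely smooth case one checks directly that $\hat y_k - x_k = \tfrac{\alpha_k}{A_k}(v_k - v_{k-1}) = \tfrac{\alpha_k}{A_k}(v_k - \hat v_{k-1})$, so this is an equality; for $\mu>0$ it is the stated inequality.)

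The second ingredient is per-component smoothness. Since each $f_l$ has $\tfrac{L}{m}$-Lipschitz gradient, $\|\nabla f_l(x_k) - \nabla f_l(y_k)\|_* \le \tfrac{L}{m}\|x_k - y_k\|$, and summing the squares over $l = 1,\dots,m$ gives
\[
\sum_{l=1}^m \|\nabla f_l(x_k) - \nabla f_l(y_k)\|_*^2 \le m\cdot\frac{L^2}{m^2}\|x_k - y_k\|^2 = \frac{L^2}{m}\|x_k - y_k\|^2 .
\]
I would then chain the two displays and invoke the weight identity $L\alpha_k^2 / A_k = \lambda(\mu A_k + \sigma)$ from Eq.~\eqref{eq:alpha_select}:
\[
\frac{m A_k}{2L}\sum_{l=1}^m \|\nabla f_l(x_k) - \nabla f_l(y_k)\|_*^2 \le \frac{A_k L}{2}\|x_k - y_k\|^2 \le \frac{L\alpha_k^2}{2A_k}\|v_k - \hat v_{k-1}\|^2 = \frac{\lambda(\mu A_k + \sigma)}{2}\|v_k - \hat v_{k-1}\|^2 .
\]

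It then remains to compare the coefficient $\lambda$ with $1-\lambda$. Because $\lambda \le \tfrac{1}{m+1} \le \tfrac12$ for every $m \ge 1$, we have $\lambda \le 1-\lambda$, so the last right-hand side is bounded by $(1-\lambda)\tfrac{\mu A_k + \sigma}{2}\|v_k - \hat v_{k-1}\|^2$; multiplying through by $-1$ yields exactly the claimed inequality. I expect the genuine obstacle to be not this final assembly but the justification of the first display once $\mu>0$: verifying that the specific $x_k$ of Eq.~\eqref{eq:x_select} really forces $\|\hat y_k - x_k\|^2 \le \tfrac{\alpha_k^2}{A_k^2}\|v_k - \hat v_{k-1}\|^2$, since $\hat v_{k-1}$ and $x_k$ then carry the awkward weights $\tfrac{\mu A_{k-1}+\sigma}{\mu A_k + \sigma}$ and the denominator $\mu(A_k^2-\alpha_k^2)+\sigma A_k$. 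I would discharge this by substituting $A_k - \alpha_k = A_{k-1}$ to simplify the denominator to $\mu A_{k-1}(A_k + \alpha_k) + \sigma A_k$ and checking that the coefficients of $y_{k-1}$ and $v_{k-1}$ in $x_k$ sum to $1$, which is what makes the bound collapse to the clean $\tfrac{\alpha_k}{A_k}$ factor. Observe, finally, that the core inequality above only requires $\lambda \le \tfrac12$; the stronger hypothesis $\lambda \le \tfrac{1}{m+1}$ leaves the extra slack $\tfrac{1-\lambda}{\lambda}\ge m$, which is what the downstream use of this lemma in Lemma~\ref{th:prop_constrained}, together with the splitting of Lemma~\ref{th:squared_norm_upper_bound}, will consume.
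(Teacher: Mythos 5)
Your proof is correct, and it shares the paper's skeleton: with $y_k=\hat y_k$, bound $\|x_k-y_k\|$ by $\frac{\alpha_k}{A_k}\|v_k-\hat v_{k-1}\|$ via the property of $x_k$ asserted after Eq.~\eqref{eq:x_select}, apply per-component $\frac{L}{m}$-smoothness, and convert $\frac{L\alpha_k^2}{A_k}$ into $\lambda(\mu A_k+\sigma)$ through the weight identity. The one genuine difference is where the factor $m$ is spent. The paper first bounds the sum of norms, $\sum_{l=1}^m\|\nabla f_l(x_k)-\nabla f_l(y_k)\|_*\le L\|x_k-y_k\|$, and then implicitly uses $\sum_l\|\cdot\|_*^2\le\bigl(\sum_l\|\cdot\|_*\bigr)^2\le L^2\|x_k-y_k\|^2$, which is a factor $m$ weaker than your direct summation of squares $\sum_l\|\cdot\|_*^2\le\frac{L^2}{m}\|x_k-y_k\|^2$; the paper recovers that lost factor from the hypothesis, via $\frac{1-\lambda}{\lambda}\ge m$, i.e.\ $\lambda\le\frac{1}{m+1}$, whereas your sharper step delivers the stated conclusion (including the $m$-weighted right-hand side) already under $\lambda\le\frac{1}{2}$. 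So your closing observation that $\lambda\le\frac{1}{2}$ suffices is right, but your guess about why the stronger hypothesis appears is not: the slack $\frac{1-\lambda}{\lambda}\ge m$ is not consumed downstream—Lemma~\ref{th:compact_negative_term} and the SAGA analysis in Lemma~\ref{th:saga_supp} use only the conclusion as stated, and Lemma~\ref{th:prop_constrained} sets $m=1$—rather, $\lambda\le\frac{1}{m+1}$ is an artifact of the paper's cruder norm-then-square step, and your argument would in principle relax the $\frac{1}{m+1}$ term in the constraint on $\lambda$ there. Finally, both you and the paper rest on the same geometric fact $\|\hat y_k-x_k\|\le\frac{\alpha_k}{A_k}\|v_k-\hat v_{k-1}\|$, which the paper's proof also merely cites; your verification sketch (substituting $A_{k-1}=A_k-\alpha_k$ and checking the coefficients of $y_{k-1}$ and $v_{k-1}$ in $x_k$ sum to one, with exact equality $\hat y_k-x_k=\frac{\alpha_k}{A_k}(v_k-v_{k-1})$ when $\mu=0$) is the right route, so you match the paper's level of rigor on that point.
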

\begin{proof}
Since $f_l$, $l=1,\dots,m$ have $\frac{L}{m}$-Lipschitz continuous gradient:
\begin{align*}
\sum_{l=1}^m \| \nabla f_l(x_k) - \nabla f_l(y_k) \|_* 
 \le L \| x_k - y_k \|  \le L \frac{\alpha_k}{A_k} \left\| v_k - \frac{\mu A_{k-1} + \sigma}{\mu A_{k} + \sigma} v_{k-1} - \frac{\mu \alpha_k}{\mu A_{k} + \sigma} x_k \right\|
\end{align*}
where the last inequality follows because of the choice of sequences ensures that $\| \hat y_k - x_k \|^2 \le \alpha_k^2 A_k^{-2} \| v_k -   \frac{\mu A_{k-1} + \sigma}{\mu A_{k} + \sigma} v_{k-1} - \frac{\mu \alpha_k}{\mu A_{k} + \sigma} x_k \|^2$. Hence, 

\[
\frac{1}{L^2 } \cdot \frac{A_k^2}{\alpha_k^2} \sum_{l=1}^m \| \nabla f_l(x_k) - \nabla f_l(y_k) \|_*^2 \le    \left\| v_k - \frac{\mu A_{k-1} + \sigma}{\mu A_{k} + \sigma} v_{k-1} - \frac{\mu \alpha_k}{\mu A_{k} + \sigma} x_k \right\|^2
\]
The choice of $\alpha_k$ ensures that $L \frac{\alpha_k^2}{A_k} = \lambda (\mu A_{k} + \sigma)$, i.e., $\left(\frac{1-\lambda}{\lambda}\right) \frac{L}{2} \frac{\alpha_k^2}{A_k} = (1-\lambda) \frac{(\mu A_{k} + \sigma)}{2}$. Hence, 
\begin{align*}
& \left(\frac{1-\lambda}{\lambda} \right) \frac{A_{k} }{2L}  \sum_{l=1}^m \| \nabla f_l(x_k) - \nabla f_l(y_k) \|_*^2 \\
& \qquad  \le (1-\lambda) \frac{\mu A_{k} + \sigma}{2} \left\| v_k - \frac{\mu A_{k-1} + \sigma}{\mu A_{k} + \sigma} v_{k-1} - \frac{\mu \alpha_k}{\mu A_{k} + \sigma} x_k \right\|^2
\end{align*}
Finally, since $\lambda \le \frac{1}{m+1}$, we have that $\left(\frac{1-\lambda}{\lambda}\right) \ge m$ and therefore the stated result.  
\end{proof}

\begin{lemma} \label{th:dk_separable} Let $f(x_k) = \sum_{l=1}^m f_l(x_k)$ where each $f_l$ has $\frac{L}{m}$-Lipschitz continuous gradient. We have
 \begin{align}
A_k (f(y_k) - f^\star) &  \le \phi(y^\star) + N_k - \sum_{i=1}^k \sum_{l=1}^m \frac{m A_{i-1}}{2L}  \| \nabla f_l(x_i) - \nabla f_l ( y_{i-1}) \|_*^2 \\
& \qquad - (1-\lambda) \sum_{i=1}^k (\mu A_i + \sigma) \| v_i - \hat v_{i-1} \|^2
 \notag
\end{align}
\end{lemma}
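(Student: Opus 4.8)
The plan is to observe that the entire derivation culminating in Eq.~\eqref{eq:accelerated_bound} depends on the aggregate smoothness of $f$ at exactly \emph{one} point: the coercivity-type upper bound Eq.~\eqref{eq:coercivity_up} inside the proof of Lemma~\ref{th:general_upper_bound}, which supplies the negative error term $-\tfrac{1}{2L}A_{i-1}\|\nabla f(x_i)-\nabla f(y_{i-1})\|_*^2$ (the term that later becomes $D_k$). Every other step---the recursion in Lemma~\ref{th:general_upper_bound}, the dual-averaging argument of Lemma~\ref{th:dual_averaging_bound}, the choice of $x_k,\alpha_k$, and the telescoping to Eq.~\eqref{eq:accelerated_bound}---is blind to how the objective is split into components. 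So the whole task reduces to re-deriving Eq.~\eqref{eq:coercivity_up} in a sharper, per-component form and then leaving the rest of the argument verbatim.

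For the per-component bound I would exploit that each $f_l$ is $\tfrac{L}{m}$-smooth, so by Lemma~\ref{th:smooth_strong_duality} each conjugate $f_l^*$ is $\tfrac{m}{L}$-strongly convex. Repeating the conjugate computation of Eq.~\eqref{eq:coercivity_up} for a single $f_l$, using $f_l(x_k)=\langle x_k,\nabla f_l(x_k)\rangle - f_l^*(\nabla f_l(x_k))$ together with Lemma~\ref{th:conjugate_facts}(ii)--(iii) (in particular $\nabla f_l^*(\nabla f_l(y_{k-1}))=y_{k-1}$), yields
\[
f_l(x_k)\le f_l(y_{k-1})+\langle x_k-y_{k-1},\nabla f_l(x_k)\rangle-\frac{m}{2L}\|\nabla f_l(x_k)-\nabla f_l(y_{k-1})\|_*^2 .
\]
Summing over $l=1,\dots,m$ and using $\sum_l\nabla f_l=\nabla f$ gives the aggregate bound
\[
f(x_k)\le f(y_{k-1})+\langle x_k-y_{k-1},\nabla f(x_k)\rangle-\frac{m}{2L}\sum_{l=1}^m\|\nabla f_l(x_k)-\nabla f_l(y_{k-1})\|_*^2 ,
\]
which is precisely Eq.~\eqref{eq:coercivity_up} with the single error term replaced by the component-wise sum.

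I would then substitute this refined bound in place of Eq.~\eqref{eq:coercivity_up} and re-run the remainder of the derivation unchanged. The linear terms are identical (the inner products $\langle x_k-y_{k-1},\nabla f_l(x_k)\rangle$ sum back to $\langle x_k-y_{k-1},\nabla f(x_k)\rangle$), so multiplying across by $A_{k-1}$, telescoping over $i=1,\dots,k$, and carrying out the dual-averaging and sequence-selection steps produces the claimed inequality, with the new error term appearing as $-\sum_{i=1}^k\sum_{l=1}^m\tfrac{mA_{i-1}}{2L}\|\nabla f_l(x_i)-\nabla f_l(y_{i-1})\|_*^2$ and the $-(1-\lambda)\sum_i(\mu A_i+\sigma)\|v_i-\hat v_{i-1}\|^2$ term untouched. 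The one point that genuinely needs care---the ``main obstacle''---is recognizing that this statement cannot be obtained by manipulating the finished bound Eq.~\eqref{eq:accelerated_bound}: since $\|\sum_l a_l\|^2\le m\sum_l\|a_l\|^2$, the new error term is \emph{larger} in magnitude, so Lemma~\ref{th:dk_separable} is strictly stronger than Eq.~\eqref{eq:accelerated_bound} and the improvement must be injected at the conjugate/smoothness step rather than applied afterward. Confirming that no subsequent step secretly relied on the aggregate form of the error term (it does not, as that term only ever appears as a nonpositive quantity to be dropped or matched) completes the argument.
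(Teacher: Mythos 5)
Your proposal is correct and follows essentially the same route as the paper's own proof: the paper likewise replaces the aggregate coercivity step in Eq.~\eqref{eq:coercivity_up} with its per-component version obtained from the $\frac{L}{m}$-smoothness of each $f_l$ (equivalently, the $\frac{m}{L}$-strong convexity of each $f_l^*$), sums over $l=1,\dots,m$, and observes that the remainder of the proof of Lemma~\ref{th:dual_averaging_bound} and the sequence-selection steps go through verbatim. Your added remark that the component-wise error term dominates the aggregate one (via $\|\sum_l a_l\|_*^2 \le m\sum_l \|a_l\|_*^2$), so the lemma cannot be recovered from Eq.~\eqref{eq:accelerated_bound} after the fact, is a correct and worthwhile clarification of why the modification must be injected at the smoothness step.
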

\begin{proof}
For the result, we just need to repeat the step in Eq.\ \eqref{eq:coercivity_up}. By the $\frac{L}{m}$-smoothness of $f_l$, we have that
\begin{align*}
f_l(x_k) \le f_l(y_{k-1}) + \langle x_k - y_{k-1} , \nabla f_l(x_k) \rangle - \frac{m A_{k-1}}{2L} \| \nabla f_l(x_k) - \nabla f_l(y_{k-1}) \|_*^2
\end{align*}
Summing from $l=1,\dots,m$ and using the fact that $f(x_k) = \sum_{l=1}^m f_l(x_k)$, 
\[
f(x_k) \le f(y_{k-1}) + \langle x_k - y_{k-1}, \nabla f(x_k) \rangle - \frac{m A_{k-1}}{2L} \sum_{l=1}^m \| \nabla f_l(x_k) - \nabla f_l(y_{k-1}) \|_*^2
\]
The rest of the proof in Lemma \ref{th:dual_averaging_bound} remains the same. 
\end{proof}


\begin{lemma}
\label{th:compact_negative_term}
Consider the setup of Lemma \ref{th:intermediate_lemma_lm} and select $\lambda \le \frac{1}{2}$. Then,
\begin{align*}
 A_k (f(y_k) - f^\star) 
& \le \phi(y^\star) + N_k  - \frac{m}{4L} \sum_{i=1}^k  \sum_{l=1}^m  A_{i-1}  \| \nabla f_l(x_i) - \nabla f_l(x_{i-1}) \|_*^2 
 \end{align*}
\end{lemma}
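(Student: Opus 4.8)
The plan is to take the separable bound of Lemma \ref{th:dk_separable} and reshape its two non-positive sums so that, summand by summand, they rebuild the consecutive gradient differences $\|\nabla f_l(x_i) - \nabla f_l(x_{i-1})\|_*^2$. To lighten notation write $g_l^i \coloneqq \nabla f_l(x_i)$ and $h_l^i \coloneqq \nabla f_l(y_i)$. Lemma \ref{th:dk_separable} then reads
\begin{align*}
A_k(f(y_k) - f^\star) \le \phi(y^\star) + N_k - \frac{m}{2L}\sum_{i=1}^k A_{i-1}\sum_{l=1}^m \|g_l^i - h_l^{i-1}\|_*^2 - (1-\lambda)\sum_{i=1}^k (\mu A_i + \sigma)\|v_i - \hat v_{i-1}\|^2,
\end{align*}
so the third term already contributes gradient differences at the pair $(x_i, y_{i-1})$, while the last term still involves the dual iterates $v_i$.

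The first step is to eliminate the $v$-term. Applying Lemma \ref{th:intermediate_lemma_lm} summand by summand (which is where the restriction on $\lambda$ enters), and noting that the fourth term above carries exactly twice the coefficient on the left-hand side of that lemma, I would bound
\begin{align*}
-(1-\lambda)\sum_{i=1}^k (\mu A_i + \sigma)\|v_i - \hat v_{i-1}\|^2 \le -\frac{m}{L}\sum_{i=1}^k A_i \sum_{l=1}^m \|g_l^i - h_l^i\|_*^2,
\end{align*}
which converts the dual term into gradient differences at the matching pair $(x_i, y_i)$.

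The second step is an index shift. Every summand on the right is non-negative and $A_0 = 0$, so discarding the $i = k$ term and relabelling $i \mapsto i-1$ gives $\sum_{i=1}^k A_i \sum_l \|g_l^i - h_l^i\|_*^2 \ge \sum_{i=1}^k A_{i-1}\sum_l \|g_l^{i-1} - h_l^{i-1}\|_*^2$. Both remaining negative sums are then weighted by $A_{i-1}$ and share the base point $y_{i-1}$, so the bound collapses to
\begin{align*}
A_k(f(y_k) - f^\star) \le \phi(y^\star) + N_k - \sum_{i=1}^k A_{i-1}\sum_{l=1}^m \left( \tfrac{m}{2L}\|g_l^i - h_l^{i-1}\|_*^2 + \tfrac{m}{L}\|g_l^{i-1} - h_l^{i-1}\|_*^2 \right).
\end{align*}

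Finally I would reconstruct the target term. Writing $g_l^i - g_l^{i-1} = (g_l^i - h_l^{i-1}) + (h_l^{i-1} - g_l^{i-1})$ and applying Lemma \ref{th:squared_norm_upper_bound} with $\gamma = 1$ gives $\|g_l^i - g_l^{i-1}\|_*^2 \le 2\|g_l^i - h_l^{i-1}\|_*^2 + 2\|h_l^{i-1} - g_l^{i-1}\|_*^2$, so $\frac{m}{4L}\|g_l^i - g_l^{i-1}\|_*^2$ is dominated by the two bracketed terms above (the coefficient $\tfrac{m}{L}$ on the second term leaves slack). Summing over $i$ and $l$ then produces $-\frac{m}{4L}\sum_{i,l} A_{i-1}\|\nabla f_l(x_i) - \nabla f_l(x_{i-1})\|_*^2$, which is the claim. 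I expect the delicate point to be the index shift together with the matching of constants: it is precisely the shift that forces both difference terms onto the common base point $y_{i-1}$ and the common weight $A_{i-1}$, and one must confirm that the discarded boundary terms (the dropped $i=k$ term, and the $i=1$ term that vanishes because $A_0 = 0$) do no harm, while the choice $\gamma = 1$ has to be reconciled with the factor $m$ coming out of Lemma \ref{th:intermediate_lemma_lm} so that the surviving coefficient is exactly $\tfrac{m}{4L}$.
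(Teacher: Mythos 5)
Your proposal is correct and follows essentially the same route as the paper's proof: start from the separable bound of Lemma \ref{th:dk_separable}, convert the dual term via Lemma \ref{th:intermediate_lemma_lm}, shift the index using $A_0 = 0$ and the discarded $i=k$ term, and reassemble $\|\nabla f_l(x_i) - \nabla f_l(x_{i-1})\|_*^2$ through Lemma \ref{th:squared_norm_upper_bound} with $\gamma = 1$. The only cosmetic difference is that you carry the coefficient $\tfrac{m}{L}$ on the $(x_{i-1},y_{i-1})$ term (leaving harmless slack) where the paper works with $\tfrac{m}{2L}$ throughout; the final constant $\tfrac{m}{4L}$ matches in both cases.
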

\begin{proof}
By Lemmas \ref{th:intermediate_lemma_lm} 
 \begin{align*}
&  - (1-\lambda) \frac{\mu A_{k} + \sigma}{2} \left\| v_k - \frac{\mu A_{k-1} + \sigma}{\mu A_{k} + \sigma} v_{k-1} - \frac{\mu \alpha_k}{\mu A_{k} + \sigma} x_k \right\|  -\frac{m A_{k-1}}{2L} \sum_{l=1}^m \| \nabla f_l(x_k) - \nabla f_l(y_{k-1}) \|_*^2 \\
& \qquad \le 
- \frac{mA_{k} }{2L}  \sum_{l=1}^m \| \nabla f_l (x_k) - \nabla f_l (y_k) \|_*^2  - \frac{m A_{k-1} }{2L} \sum_{l=1}^m \| \nabla f_l(x_k) - \nabla f_l( y_{k-1}) \|_*^2
\end{align*}
Summing from $i=1,\dots,k$ 
\begin{align*}
& - \frac{m}{2L}  \sum_{i=1}^k \left( A_{i} \sum_{l=1}^m \| \nabla f_l (x_i) - \nabla f_l (y_i) \|_*^2  + A_{i-1} \sum_{l=1}^m \| \nabla f_l(x_i) - \nabla f_l( y_{i-1}) \|_*^2 \right) \\
& \qquad = - \frac{1}{2L} \sum_{i=1}^{k} \left( A_{i-1}  \sum_{l=1}^m \| \nabla f_l(x_{i-1}) - \nabla f_l(y_{i-1}) \|_*^2 +  A_{i-1}  \sum_{l=1}^m \| \nabla f_l(x_i) - \nabla f_l( y_{i-1}) \|_*^2   \right) \\
& \qquad \qquad - \frac{ m A_{k}}{2L}  \sum_{l=1}^m \| \nabla f_l(x_k) - \nabla f_l( y_{k}) \|_*^2 + \frac{A_{0}}{2L}  \sum_{l=1}^m \| \nabla f_l(x_0) - \nabla f_l( y_{0}) \|_*^2 \\
& \qquad \le - \frac{m}{2L} \sum_{i=1}^{k}A_{i-1}  \left(   \sum_{l=1}^m \left( \| \nabla f_l(x_{i-1}) - \nabla f_l(y_{i-1}) \|_*^2 +  \| \nabla f_l(x_i) - \nabla f_l( y_{i-1}) \|_*^2 \right)  \right) \\
& \qquad \le - \frac{m}{4L} \sum_{i=1}^{k}   \sum_{l=1}^m A_{i-1}  \| \nabla f_l(x_{i}) - \nabla f_l(x_{i-1}) \|_*^2  
\end{align*}
where the last inequality follows since by Lemma \ref{th:squared_norm_upper_bound} $\frac{1}{2}\| a - b \|^2 \le \| a\|^2 + \| b\|^2$.
\end{proof}

We are now ready to prove Proposition \ref{th:prop_constrained}. Set  $m=1$. Using Theorem \ref{th:main_theorem} and Lemmas \ref{th:bound_unbiased} and \ref{th:compact_negative_term} with $\lambda \le 1/2$, we have
\begin{align*}
 A_k (f(y_k) - f^\star) 
& \le \phi(y^\star) + \frac{1}{L} \sum_{i=1}^k \lambda A_i \mathbf E [ \| \xi_i \|_*^2 ]  - \frac{1}{4L} \sum_{i=1}^k 
  A_{i-1}  \| \nabla f(x_i) - \nabla f(x_{i-1}) \|_*^2 
\end{align*}
The sufficient condition follows by rearranging terms.

\section{Finite-sum problems with SAGA}
\label{sec:sup_app_saga}

The following result shows that we can use the SAGA oracle  in Algorithm \ref{al:smooth} and retain the accelerated rates. 

\begin{proposition}[Accelerated \textsc{SAGA}] \label{th:saga} Consider the setup of Theorem \ref{th:main_theorem} where $f(x) =  \sum_{l=1}^m f_l(x)$ and each $f_l(x)$ is ${\frac{L}{m}}$-Lipschitz continuous and ($\mu$-strongly) convex. Consider an inexact oracle ${\widetilde {\mathcal O}}$ that returns a vector ${\widetilde \nabla f(x_k)}$ as indicated in Eq.\ \eqref{eq:saga_update}. Then,  Algorithm \ref{al:smooth} ensures
\begin{align}
 A_k \mathbf E[f(y_k) - f^\star]
& \le \phi(y^\star) +\sum_{i=1}^k  \sum_{l=1}^m   \frac{mA_{i-1} }{4L}  \left(\lambda \frac{96 m^2 }{b^3}  - 1\right)   \mathbf E[  \| \nabla f_l(x_i) - \nabla f_l(x_{i-1}) \|_*^2 ] \label{eq:saga_bound}
\end{align}
where $0 <\lambda \le \min \left\{ \frac{1}{m+1}, \frac{L}{\mu} \cdot \frac{b^2}{16m^2} \right\}$ and $A_k \ge \frac{\lambda \sigma}{2L}\prod_{i=1}^k (1 + \max \{ \frac{2}{i}, \sqrt {\frac{\lambda \mu}{L} } \} ) - \frac{\lambda \sigma}{2L}$. 
\end{proposition}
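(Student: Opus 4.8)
The plan is to reduce the statement to a single variance estimate and then prove that estimate through a one-step recursion for the SAGA ``memory gap'' that sidesteps the randomness of when each sample was last touched.

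\emph{Step 1 (reduction).} Setting $D_i^l \coloneqq \|\nabla f_l(x_i)-\nabla f_l(x_{i-1})\|_*^2$, I would first take expectations in Lemma~\ref{th:compact_negative_term} (valid since $\lambda\le\frac1{m+1}\le\frac12$) and bound $\mathbf E[N_k]$ with Lemma~\ref{th:bound_unbiased}, obtaining $A_k\mathbf E[f(y_k)-f^\star]\le\phi(y^\star)+\frac{\lambda}L\sum_{i=1}^k A_i\mathbf E\|\xi_i\|_*^2-\frac m{4L}\sum_{i=1}^k\sum_{l=1}^m A_{i-1}\mathbf E[D_i^l]$. Comparing with Eq.~\eqref{eq:saga_bound}, it then suffices to prove the variance bound $\sum_{i=1}^k A_i\mathbf E\|\xi_i\|_*^2\le\frac{24m^3}{b^3}\sum_{i=1}^k\sum_{l=1}^m A_{i-1}\mathbf E[D_i^l]$, since $\frac{\lambda}L\cdot\frac{24m^3}{b^3}=\frac m{4L}\cdot\frac{96m^2}{b^3}\lambda$.

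\emph{Step 2 (conditional variance and a recursion).} Writing $g_l^k\coloneqq\nabla f_l(x_k)-\nabla f_l(\psi_{k-1}^l)$, the estimator \eqref{eq:saga_update} is $\widetilde\nabla f(x_k)=\frac mb\sum_{j\in J_k}g_j^k+\sum_{l}\nabla f_l(\psi_{k-1}^l)$, so $\xi_k=\frac mb\sum_{j\in J_k}g_j^k-\sum_l g_l^k$. Because $x_k$ is fixed before $J_k$ is drawn and the sampling is oblivious, $J_k$ is independent of $\mathcal F_{k-1}\coloneqq\sigma(J_1,\dots,J_{k-1})$, while $x_k$ and every $\psi_{k-1}^l$ are $\mathcal F_{k-1}$-measurable; hence $\mathbf E[\|\xi_k\|_*^2\mid\mathcal F_{k-1}]\le\frac mb\sum_{l=1}^m\|g_l^k\|_*^2$ by the standard mini-batch variance computation with $p\coloneqq b/m$. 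The key structural fact is the one-step recursion $g_l^{k+1}=\mathbf 1[l\notin J_k]\,g_l^k+\delta_{k+1}^l$, where $\delta_{k+1}^l\coloneqq\nabla f_l(x_{k+1})-\nabla f_l(x_k)$: if $l$ is sampled at step $k$ its memory is refreshed to $x_k$, otherwise the gap grows by one increment.

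\emph{Step 3 (linear recursion and geometric summation).} Applying Lemma~\ref{th:squared_norm_upper_bound} to this recursion with $\gamma=\frac{2(1-p)}p$ gives $\|g_l^{k+1}\|_*^2\le(1+\gamma^{-1})\mathbf 1[l\notin J_k]\|g_l^k\|_*^2+(1+\gamma)\,D_{k+1}^l$; taking full expectations, using $\mathbf E[\mathbf 1[l\notin J_k]\mid\mathcal F_{k-1}]=1-p$ together with $(1+\gamma^{-1})(1-p)=1-\frac p2$ and $1+\gamma\le\frac2p$, yields the clean linear recursion $P_{k+1}^l\le(1-\tfrac p2)P_k^l+\tfrac{2m}b\mathbf E[D_{k+1}^l]$ for $P_k^l\coloneqq\mathbf E\|g_l^k\|_*^2$ (with $P_0^l=0$). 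Unrolling and swapping the order of summation gives $\sum_{k}A_kP_k^l\le\frac{2m}b\sum_j\mathbf E[D_j^l]\sum_{k\ge j}A_k(1-\tfrac p2)^{k-j}$; the inner sum is $O(\tfrac mb)A_j$ precisely because the constraint $\lambda\le\frac L\mu\cdot\frac{b^2}{16m^2}$ forces the per-step growth factor of $A_k$, namely $\approx 1+\sqrt{\lambda\mu/L}\le 1+\tfrac p4$, to satisfy $(1+\tfrac p4)(1-\tfrac p2)<1$ with a gap of order $p$. Multiplying by $\frac mb$ and using $A_j\le(1+\sqrt{\lambda\mu/L})A_{j-1}$ to convert $A_j$ into $A_{j-1}$ then delivers the variance bound of Step~1 with a constant comfortably below $24$. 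The lower bound on $A_k$ is inherited verbatim from Theorem~\ref{th:main_theorem} with $\mu,\sigma$ replaced by $\lambda\mu,\lambda\sigma$.

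\emph{Main obstacle.} The genuine difficulty is that the ``age'' of each memory term $\psi_{k-1}^l$ is random and \emph{correlated} with the increments $\delta_i^l$, since both are driven by the shared sampling history; a naive Cauchy--Schwarz telescoping of $g_l^k$ introduces an age factor that cannot be decoupled from the $D_i^l$. The recursion in Step~2 is what dissolves this obstacle: it recasts the age-dependent gap as a linear dynamical system whose only random coefficient, $\mathbf 1[l\notin J_k]$, is independent of $\mathcal F_{k-1}$, so expectations factor cleanly. What remains is then purely deterministic---ensuring the geometric forgetting rate $1-\tfrac p2$ beats the growth of the weights $A_k$---which is exactly the role played by the second restriction on $\lambda$.
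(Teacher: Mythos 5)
Your proposal is correct and takes essentially the same route as the paper's own proof (Lemmas \ref{th:bound_unbiased}, \ref{th:compact_negative_term}, and \ref{th:saga_supp}): the identical reduction to the variance bound $\sum_{i=1}^k A_i \mathbf E[\|\xi_i\|_*^2] \le \frac{24m^3}{b^3}\sum_{i=1}^k\sum_{l=1}^m A_{i-1}\mathbf E[\|\nabla f_l(x_i)-\nabla f_l(x_{i-1})\|_*^2]$, the same $\frac{m}{b}$ mini-batch variance step, the same one-step memory-gap recursion via Lemma \ref{th:squared_norm_upper_bound} with contraction $1-\frac{b}{2m}$, and the same use of $\lambda \le \frac{L}{\mu}\cdot\frac{b^2}{16m^2}$ to let the geometric forgetting absorb the growth of $A_k$ before converting $A_j$ into $A_{j-1}$ via a bounded ratio. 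Your pathwise indicator recursion $g_l^{k+1}=\mathbf 1[l\notin J_k]\,g_l^k+\delta_{k+1}^l$ is merely a cleaner rewriting of the paper's split-then-average step (yielding marginally better constants, $\frac{2m^2}{b^2}$ versus $\frac{3m^2}{b^2}$), and it even shares the paper's slight gloss of treating $1+\sqrt{\lambda\mu/L}$ as the per-step growth factor of $A_k$, which neglects the $\lambda\sigma/(L A_k)$ contribution during the early smooth phase.
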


Recall that gradients are estimated as in Eq.\ \eqref{eq:saga_update}. The following lemma upper bounds $N_k$ for this particular type of oracle.

\begin{lemma}
\label{th:saga_supp}
 Let $f(x) = \sum_{l=1}^m f_l(x)$ where each $f_l$ is continuous, and consider that gradients are estimated as in Eq.\ \eqref{eq:saga_update}. We have
\[
N_k \le \frac{\lambda}{L} \cdot  \frac{24 m^3}{b^3} \sum_{i=1}^k \sum_{l=1}^m A_{i-1}  \mathbf E [ \| \nabla f_l(x_i) - \nabla f_l(x_{i-1}) \|_*^2]
\]
where $\lambda \le \min \left\{ \frac{1}{m+1}, \frac{L}{\mu} \cdot \frac{b^2}{16m^2} \right\}$.

\end{lemma}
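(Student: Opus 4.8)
The plan is to combine the generic noise bound of Lemma \ref{th:bound_unbiased} with a variance estimate tailored to the \textsc{SAGA} update \eqref{eq:saga_update}. Since $y_k=\hat y_k$ is assumed, Lemma \ref{th:bound_unbiased} already gives $\mathbf E[N_k]\le \frac{\lambda}{L}\sum_{i=1}^k A_i\,\mathbf E[\|\xi_i\|_*^2]$, so the entire task reduces to dominating $\sum_{i=1}^k A_i\,\mathbf E[\|\xi_i\|_*^2]$ by the telescoped quantity $\sum_i\sum_l A_{i-1}\,\mathbf E[\|\nabla f_l(x_i)-\nabla f_l(x_{i-1})\|_*^2]$. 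Throughout I work in the $\ell_2$ norm (consistent with Theorem \ref{th:main_theorem}), so $\|\cdot\|_*=\|\cdot\|$ and I may freely use inner products.

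First I would pin down the per-step variance. Writing $g^{(i)}_l:=\nabla f_l(x_i)-\nabla f_l(\psi^l_{i-1})$ and $\bar g^{(i)}:=\frac1m\sum_l g^{(i)}_l$, the identity $\sum_l g_l^{(i)}=m\bar g^{(i)}$ lets me rewrite the \textsc{SAGA} error as the centered subsample $\xi_i=\frac{m}{b}\sum_{j\in J_i}\big(g^{(i)}_j-\bar g^{(i)}\big)$. Computing the second moment of a size-$b$ uniform sample drawn without replacement, and using $\sum_l (g_l^{(i)}-\bar g^{(i)})=0$ to evaluate the cross terms, yields
\[
\mathbf E_{J_i}\big[\|\xi_i\|^2\big]=\frac{m^2}{b^2}\cdot\frac{b}{m}\cdot\frac{m-b}{m-1}\sum_{l=1}^m\|g^{(i)}_l-\bar g^{(i)}\|^2\le \frac{m}{b}\sum_{l=1}^m\|g^{(i)}_l\|^2 .
\]
This is where the without-replacement structure matters: the negative correlation between sampled indices is exactly what turns a naive $m^2/b$ prefactor into $m/b$.

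Next I would expose the memory age. Because $\psi^l_{i-1}=x_{\tau(l,i)}$, where $\tau(l,i)$ is the last iteration $\le i-1$ at which $l$ was selected, telescoping gives $g_l^{(i)}=\sum_{t=\tau(l,i)+1}^{i}d^{(t)}_l$ with $d_l^{(t)}:=\nabla f_l(x_t)-\nabla f_l(x_{t-1})$, and Cauchy--Schwarz yields $\|g_l^{(i)}\|^2\le (i-\tau(l,i))\sum_{t=\tau(l,i)+1}^{i}\|d_l^{(t)}\|^2$. Substituting into the variance bound and into $\sum_i A_i\,\mathbf E\|\xi_i\|^2$, I would interchange summation to collect, for each fixed pair $(l,t)$, the total weight multiplying $\|d_l^{(t)}\|^2$. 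That weight is a sum over $i\ge t$ of $A_i$ times the age factor restricted to the event $\{\tau(l,i)<t\}$, i.e.\ the event that $l$ is not re-selected during $\{t,\dots,i-1\}$, which has probability $(1-\tfrac{b}{m})^{\,i-t}$.

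The main obstacle is precisely this interchange: the differences $d_l^{(t)}$ are measurable with respect to the selections $J_1,\dots,J_{t-1}$, while the age factor depends on selections on both sides of $t$, so the geometric probability and the age weight do not factor out of the expectation for free. I would resolve it by conditioning on the history up to time $t$, using that the indicators $\mathbf 1[l\notin J_s]$ for $s\ge t$ are independent of $d_l^{(t)}$ to extract the decay $(1-b/m)^{i-t}$, and bounding the age factor by a geometric moment. The residual deterministic series behaves like $\sum_{a\ge0}(a+1)\,(1-\tfrac{b}{m})^{a}\,\big(A_{t+a}/A_{t-1}\big)$, which converges: the constraint $\lambda\le \frac{L}{\mu}\cdot\frac{b^2}{16m^2}$ forces $\sqrt{\lambda\mu/L}\le \frac{b}{4m}$, hence an asymptotic per-step growth $A_i/A_{i-1}\le(1-\sqrt{\lambda\mu/L})^{-1}\le 1+\tfrac{b}{2m}$ in the strongly convex phase (and only polynomial growth in the initial phase), so the product of growth and decay has effective ratio $(1+\tfrac{b}{2m})(1-\tfrac{b}{m})<1$ and the series is $O\!\big((m/b)^2\big)$. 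Multiplying this $O((m/b)^2)$ by the $m/b$ from the variance estimate produces the advertised $\tfrac{m^3}{b^3}$ scaling; carrying the universal constants through the without-replacement factor, the Cauchy--Schwarz age step, and the geometric-series estimate gives the stated $24$, while the factor $A_{t-1}$ (rather than $A_t$) emerges because the weight attached to $\|d_l^{(t)}\|^2$ is naturally anchored at the iteration $t-1$ preceding the difference.
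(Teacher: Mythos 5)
Your reduction through Lemma \ref{th:bound_unbiased} and your per-step variance estimate are sound, and the latter is actually \emph{tighter} than the paper's: the paper passes from $\mathbf E\|\frac{m}{b}\sum_{j\in J_k}(\nabla f_j(\psi^j_{k-1})-\nabla f_j(x_k))\|_*^2$ to $\frac{m}{b}\sum_{l=1}^m\|\nabla f_l(x_k)-\nabla f_l(\psi^l_{k-1})\|_*^2$ by (in effect) discarding the cross terms, whereas you compute the without-replacement second moment exactly, with the finite-population correction $\frac{m-b}{m-1}$; both land on the same $\frac{m}{b}$ factor. Where you genuinely diverge is the unrolling: the paper never introduces the last-selection time $\tau(l,i)$. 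Instead it applies Lemma \ref{th:squared_norm_upper_bound} with $\gamma=\frac{b}{2m}$ to get a one-step recursion with coefficients $(1+\frac{2m}{b})$ and $(1+\frac{b}{2m})$, uses the refresh identity $\sum_l\mathbf E[\|\nabla f_l(x_{k-1})-\nabla f_l(\psi^l_{k-1})\|_*^2]=(1-\frac{b}{m})\sum_l\|\nabla f_l(x_{k-1})-\nabla f_l(\psi^l_{k-2})\|_*^2$ (clean, because $J_{k-1}$ is drawn independently of $x_{k-1}$ and $\psi^l_{k-2}$), and then unrolls geometrically with ratio $1-\frac{b}{2m}$, absorbing the growth of $A_i$ exactly as you do: the same condition $\lambda\le\frac{L}{\mu}\cdot\frac{b^2}{16m^2}$ forcing $\sqrt{\lambda\mu/L}\le\frac{b}{4m}$, and the same factor-$2$ conversion from $A_i$ to $A_{i-1}$.

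The genuine gap is in your treatment of the age factor. On the event $\{\tau(l,i)<t\}$ the Cauchy--Schwarz multiplier is $i-\tau(l,i)=(i-t)+(t-\tau(l,t))$, and only the \emph{event} is determined by the post-$t$ selections; the residual staleness $t-\tau(l,t)$ is a function of $J_1,\dots,J_{t-1}$, exactly like $d^{(t)}_l$ itself (the iterates are driven by the past selections), so the two are correlated and $\mathbf E[(t-\tau(l,t))\,\|d^{(t)}_l\|_*^2]$ does not factor into a ``geometric moment'' times $\mathbf E[\|d^{(t)}_l\|_*^2]$. Your displayed series carries only the factor $(a+1)=i-t+1$, which undercounts the true age, so as written the collected weight does not upper-bound the quantity you are trying to dominate. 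The standard repair is to replace the unweighted Cauchy--Schwarz by a geometrically weighted one, e.g.\ $\|\sum_{t=\tau+1}^{i}d^{(t)}_l\|_*^2\le\bigl(\sum_{t=\tau+1}^{i}\theta^{\,i-t}\bigr)\bigl(\sum_{t=\tau+1}^{i}\theta^{-(i-t)}\|d^{(t)}_l\|_*^2\bigr)$ with $\theta=1-\frac{b}{2m}$: the weight attached to $\|d^{(t)}_l\|_*^2$ is then deterministic, the independence of future selections extracts $(1-\frac{b}{m})^{i-t}$, and $(1-\frac{b}{m})\theta^{-1}\le 1-\frac{b}{2m}$ keeps the series geometric after absorbing $A_i/A_{i-1}$. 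This deterministic distribution of the telescope is precisely what the paper's recursive use of Lemma \ref{th:squared_norm_upper_bound} achieves implicitly. With that repair your outline goes through and yields the claimed $O(m^3/b^3)$ scaling; your assertion that the constants come out to exactly $24$ is unsubstantiated (the paper's $24=3\cdot 4\cdot 2$ arises from its specific recursion), and your growth bound $A_i/A_{i-1}\le 1+\frac{b}{2m}$ still needs the separate (and in the paper equally brief) argument in the merely-smooth phase, where one invokes $A_{k-1}\ge\sigma/L$ and $\lambda\le\frac{b^2}{16m^2}$.
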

\begin{proof}
Set $x_0 \in C$ and $\psi_0^l = x_0$ for all $l \in \{ 1,\dots,m \}$.
In each iteration $k \ge 1$,  (i) select a collection of indexes $J_k \subseteq \{1,\dots,n\}$ uniformly at random with $|J_k| = b$, (ii) set $\psi_k^j = x_k$, and (iii) let 
\begin{align*}
\widetilde \nabla f(x_k)  = \frac{m}{b}\sum_{j \in J_k }  \nabla f_j (x_k) - \frac{m}{b} \sum_{j \in J_k } \nabla f_j (\psi_{k-1}^j) + \sum_{l=1}^m \nabla f_l(\psi_{k-1}^l)
\end{align*}
Note that $\mathbf E_{J_k} [ \widetilde \nabla f(x_k) ] = \nabla f(x_k)$ because $\mathbf E_{j \sim \{1,\dots,m\}} [ \nabla f_j (x_k)] =  \frac{1}{m} \sum_{l=1}^m \nabla f_l(x_k)$.
Now, observe
\begin{align*}
&  \mathbf E [ \| \widetilde \nabla f(x_k) - \nabla f(x_k) \|_*^2 ]  \\
 & \qquad = \mathbf E \textstyle [ \| \frac{m}{b} \sum_{j \in J_k } ( \nabla f_j (x_k) - \nabla f_j (\psi_{k-1}^j) )  +  \sum_{l=1}^m \nabla f_l(\psi_{k-1}^l) - \nabla f(x_k) \|_*^2 ] \\
& \qquad = \mathbf E [ \| \textstyle \frac{m}{b} \sum_{j \in J_k } ( \nabla f_j (x_k) - \nabla f_j (\psi_{k-1}^j) ) + \mathbf E [  \frac{m}{b} \sum_{j \in J_k } (\nabla f_j (\psi_{k-1}^j)-  \nabla f_j (x_k)) ]  \|_*^2] \\
& \qquad = \mathbf E [ \| \textstyle \frac{m}{b} \sum_{j \in J_k } (\nabla f_j (\psi_{k-1}^j)-  \nabla f_j (x_k)) \|_*^2] -  \| \mathbf E [ \frac{m}{b} \sum_{j \in J_k } (\nabla f_j (\psi_{k-1}^j)-  \nabla f_j (x_k)) ]  \| _*^2 \\
& \qquad \le \mathbf E [ \|  \textstyle \frac{m}{b} \sum_{j \in J_k } (\nabla f_j (\psi_{k-1}^j)-  \nabla f_j (x_k)) \|_*^2] \\ 
& \qquad = \frac{m^2}{b^2} \mathbf E [ \|  \textstyle  \sum_{j \in J_k } (\nabla f_j (\psi_{k-1}^j)-  \nabla f_j (x_k)) \|_*^2] \\ 
& \qquad = \frac{m^2}{b^2}  \textstyle  \sum_{j \in J_k } \mathbf E [ \| \nabla f_j (\psi_{k-1}^j)-  \nabla f_j (x_k) \|_*^2] \\ 
& \qquad =\textstyle  \frac{m}{b}  \sum_{l=1}^m  \| \nabla f_l (x_k) - \nabla f_l (\psi_{k-1}^l) \|_*^2
\end{align*}
We now proceed to upper bound $\| \nabla f_l (x_k) - \nabla f_l (\psi_{k-1}^l) \|_*^2$. Observe
\begin{align*}
 \| \nabla f_l (x_k) - \nabla f_l (\psi_{k-1}^l) \|_*^2 & \le \left(1+ \frac{2m}{b}\right) \|  \nabla f_l (x_k) - \nabla f_l (x_{k-1}) \|_*^2 \\
& \quad + \left( 1 + \frac{b}{2m} \right) \| \nabla f_l(x_{k-1}) - \nabla f_l (\psi_{k-1}^l) \|_*^2
\end{align*}
By the stated choice of index, we have that $\| \nabla f_l(x_{k-1}) - \nabla f_l (\psi_{k-1}^l) \|^2 = 0$ for the indexes in $J_k$. Hence, taking expectations w.r.t.\ $J_{k-1}$ we have
\[
 \sum_{l=1}^m \mathbf E[\| \nabla f_l(x_{k-1}) - \nabla f_l (\psi_{k-1}^l) \|_*^2 ]  = \left(1-\frac{b}{m} \right) \sum_{l=1}^m \| \nabla f_l (x_{k-1}) - \nabla f_i (\psi_{k-2}^l) \|_*^2
\]
Hence, 
\begin{align*}
&  \mathbf E [ \| \widetilde \nabla f(x_k) - \nabla f(x_k) \|_*^2 ]  \\
& \quad \le \frac{m}{b} \left(1+\frac{2m}{b}\right) \sum_{l=1}^m \mathbf E [ \|  \nabla f_l (x_k) - \nabla f_l (x_{k-1}) \|_*^2 ]  \\
& \qquad +   \left(1-\frac{b}{m} \right) \left(1+\frac{b}{2m}\right) \frac{m}{b} \sum_{l=1}^m \| \nabla f_l (x_{k-1}) - \nabla f_l (\psi_{k-2}^l) \|_*^2  \\
  & \quad \le  \frac{3m^2}{b^2} \sum_{l=1}^m \mathbf E [ \|  \nabla f_l (x_k) - \nabla f_l (x_{k-1}) \|_*^2  ]+   \left(1-\frac{b}{2m} \right)  \frac{m}{b} \sum_{l=1}^m \| \nabla f_l (x_{k-1}) - \nabla f_l (\psi_{k-2}^l) \|_*^2  
  \end{align*}
  where the first inequality follows by Lemma \ref{th:squared_norm_upper_bound} with $\gamma = \frac{b}{2m}$ and the second because $\frac{m}{b} \left(1+\frac{2m}{b}\right) \le \frac{3m^2}{b^2}$ and $ \left(1-\frac{b}{m} \right) \left(1+\frac{b}{2m}\right) \le  \left(1-\frac{b}{2m} \right)$. Apply the argument recursively to obtain
  \begin{align*}
  \mathbf E [ \| \widetilde \nabla f(x_k) - \nabla f(x_k) \|_*^2 ]  
  &  \le   \frac{3m^2}{b^2}  \sum_{\tau=1}^k \left(1-\frac{b}{2m} \right)^{k - \tau}  \sum_{l=1}^m \mathbf E [\| \nabla f_l (x_{\tau}) - \nabla f_l (x_{\tau-1}) \|_*^2 ] \\
  & \qquad +   \left(1-\frac{b}{2m} \right)^k  \frac{m}{b} \sum_{l=1}^m \| \nabla f_l(x_{0}) - \nabla f_l (\psi_{-1}^l) \|_*^2 \\
 &  =   \frac{3m^2}{b^2}  \sum_{\tau=1}^k \left(1-\frac{b}{2m} \right)^{k - \tau}  \sum_{l=1}^m \mathbf E [ \| \nabla f_l(x_{\tau}) - \nabla f_l (x_{\tau-1}) \|_*^2 ]
\end{align*}
We want this to hold
\begin{align}
A_{k} \left( 1 - \frac{b}{2m} \right) \le A_{k-1} \left( 1 - \frac{b}{4m} \right)
\label{eq:absorption_condition}
\end{align}
Recall that $A_k \ge A_{k-1} \ge \cdots \ge A_1$ and $L \frac{ \alpha_k^2}{A_k} = \lambda (\mu A_k + \sigma)$. Hence, when $f$ is $\mu$-strongly convex:
\begin{align*}
\frac{\alpha_k}{A_k} \ge \sqrt{ \frac{\lambda \mu}{L}} \implies \frac{A_k - A_{k-1}}{A_{k-1}} \ge \sqrt{ \frac{\lambda \mu}{L}} \implies A_k \ge \left(1 + \sqrt{ \frac{\lambda \mu}{L}} \right) A_{k-1}
\end{align*}
Similarly, when $f$ is just smooth, we have
\begin{align*}
\frac{\alpha^2_k}{A_k} \ge { \frac{\lambda \sigma}{L}} \implies \frac{(A_k - A_{k-1})^2}{A_{k-1}} \ge { \frac{\lambda \sigma}{L}} \implies A_k \ge \left(1 + \sqrt{ \frac{\lambda \sigma}{LA_{k-1}}} \right) A_{k-1}
\end{align*}
We also want that 
\[
 \left(1 + \sqrt{ \frac{\lambda \mu}{L}} \right) \left( 1 - \frac{b}{2m} \right) \le  \left( 1 - \frac{b}{4m} \right)
 \]
which will hold when $ \left(1 + \sqrt{ \frac{\lambda \mu}{L}} \right)  \le (1 + \frac{b}{4m})$. That is, when $\lambda \le \frac{L}{\mu} \cdot \frac{b^2}{16 m^2}$. The requirement when $f$ is just convex is also that   $\lambda \le \frac{L}{\sigma} \cdot \frac{b^2}{16 m^2} A_{k-1}$. Since if $A_{k-1} \ge \frac{\sigma}{L}$, we have that $\lambda \le \frac{b^2}{16m^2}$. 

Summing from $i=1,\dots,k$
\begin{align*}
& \frac{\lambda}{L}\sum_{i=1}^k A_i \mathbf E [ \| \widetilde \nabla f(x_i) - \nabla f(x_i) \|_*^2 ]  \\
 & \qquad  \le \frac{\lambda}{L} \cdot \frac{3m^2}{b^2}   \sum_{i=1}^k A_i  \sum_{\tau=1}^i \left(1-\frac{b}{2m} \right)^{i - \tau}  \sum_{l=1}^m \mathbf E [ \| \nabla f_l (x_{\tau}) - \nabla f_l (x_{\tau-1}) \|_*^2 ] \\
 & \qquad \le \frac{\lambda}{L} \cdot \frac{3m^2}{b^2}  \sum_{i=1}^k \sum_{\tau=1}^i A_\tau \left(1-\frac{b}{4m} \right)^{i - \tau}  \sum_{l=1}^m \mathbf E [ \| \nabla f_l (x_{\tau}) - \nabla f_l (x_{\tau-1}) \|_*^2 ] \\
& \qquad \le \frac{\lambda}{L} \cdot  \frac{12 m^3}{b^3} \sum_{i=1}^k  A_i \sum_{l=1}^m \mathbf E [ \| \nabla f_l(x_i) - \nabla f_l(x_{i-1}) \|_*^2] \\
& \qquad = \frac{\lambda}{L} \cdot  \frac{12 m^3}{b^3} \sum_{i=1}^k  \frac{A_i}{A_{i-1}} A_{i-1} \sum_{l=1}^m \mathbf E [ \| \nabla f_l(x_i) - \nabla f_l(x_{i-1}) \|_*^2] \\
& \qquad \le \frac{\lambda}{L} \cdot  \frac{24 m^3}{b^3} \sum_{i=1}^k  A_{i-1} \sum_{l=1}^m \mathbf E [ \| \nabla f_l(x_i) - \nabla f_l(x_{i-1}) \|_*^2] 
\end{align*}
where the last equation follows since from Eq.\ \eqref{eq:absorption_condition} 
\[
\frac{A_k}{A_{k-1}} \le \frac{(1-\frac{b}{4m})}{(1 - \frac{b}{2m})} = \frac{4m-b}{4m-2b} \le \frac{4m-b}{2m} \le \frac{4m}{2m} \le 2
\] 
\end{proof}
Combining Lemma \ref{th:compact_negative_term} and \ref{th:saga_supp}, we obtain
\begin{align}
 A_k (f(y_k) - f^\star) 
& \le \phi(y^\star) +\sum_{i=1}^k  \sum_{l=1}^m   \frac{mA_{i-1} }{4L}   \left(\lambda \frac{96 m^2 }{b^3}  - 1\right) \mathbf E[ \| \nabla f_l(x_i) - \nabla f_l(x_{i-1}) \|_*^2 ]
\label{eq:saga_appendix}
\end{align}
where $\lambda \le \min \left\{ \frac{1}{m+1}, \frac{L}{\mu} \cdot \frac{b^2}{16m^2} \right\}$. Selecting $\lambda$ sufficiently small, the second term in the right-hand-side of the last equation is non-positive, and so we can drop it.  That is, we have that $f(y_k) - f^\star 
 \le \frac{\phi(y^\star)}{A_k}$ where $A_k$ is as in Theorem \ref{th:main_theorem}.

\section{Experiments}

This section contains two numerical experiments. A federated logistic regression problem with compressed gradients, and a finite-sum problem with a SAGA oracle. 

%

The optimal value $\smash{\hat f^{\star}}$ in all experiments is computed with a high-accuracy interior-point method in Julia \cite{Julia, IPOPT}. 
%

\subsection{Federated logistic regression with compressed gradients}
\label{sec:app_federated}


We consider the distributed logistic regression problem in the federated setting with the LIBSVM's \emph{mushroom} and \emph{a5a} data sets \cite{libsvm} (BSD license). In short, there are $m=10$ clients, each with a private dataset $\mathcal D_l$, $l=1,\dots,m$ and a loss function $f_l(x) = \sum_{(a,b) \in \mathcal D_l} \log \left( 1 + \exp(-b \langle a, x\rangle) \right)$, where $(a,b)$ are data samples, $a  \in \R^n$, $b \in \{-1,1\}$. The objective function is the sum of the clients' loss function plus a convex regularizer: $\min_{x \in \R^n} \sum_{l =1}^m   f_l(x)+ \frac{1}{2} \| x\|^2_2$.
The federated training process consists of running the accelerated algorithms at a central node and querying the clients for gradients. The clients send to the server a compressed/quantized version of the gradient (i.e., $\widetilde \nabla f_l(x_k)$) to reduce the number of communicated bits (i.e., the time to compute a gradient in a distributed manner). The gradient compression is carried out by three different schemes: na\"ive random sparsification \cite[Sec.\ 3.1]{LKQ+20},\footnote{ The ``na\"ive random sparsification'' gradient compression scheme selects components uniformly at random without  taking into account their magnitude or importance. The latter is essentially what makes ``na\"ive random sparsification'' worst than the two other schemes. } random dithering \cite{AGL+17}, and natural compression \cite{{HHH+20}}.\footnote{See supplementary material for details.} The experiments follow the setup in \cite[Sec.\ 6]{LKQ+20}, and the number of transmitted bits per gradient is as indicated in \cite[Sec.\ 6]{LKQ+20}.

Figures \ref{fig:logsitic_mushroom_appendix} and \ref{fig:logsitic_a5a_appendix} shows the algorithms' performance for both datasets as a function of the number of rounds ($k$) and communicated bits.\footnote{Parameter $\lambda$ in Algorithm\ \ref{al:smooth} satisfies the condition in Proposition \ref{th:prop_unconstrained}.} 
Observe that using an inexact oracle (i.e., compressed gradients) degrades the algorithm's performance in terms of iterations (i.e., dashed line above the solid line in subfigures \ref{fig:logsitic_mushroom_appendix}d, \ref{fig:logsitic_mushroom_appendix}e, \ref{fig:logsitic_mushroom_appendix}f, \ref{fig:logsitic_a5a_appendix}d, \ref{fig:logsitic_a5a_appendix}e, \ref{fig:logsitic_a5a_appendix}f), i.e., using compressed gradients requires computing a larger number of gradients to achieve the same training loss. However, since gradients can be estimated/communicated faster with an inexact oracle, that reduces the total training time.\footnote{The total training time is the total communicated bits divided by the bandwidth available between the clients and the server. }
Also, note the three compression schemes perform differently, with the na\"ive random sparsification being the worst. Finally, note from all plots that the proposed algorithms perform better than ACGD algorithm proposed in \cite{LKQ+20}, which is specifically designed for these types of problems.


\begin{figure*}[h!]
\centering
{\resizebox{0.9\textwidth}{!}{\input{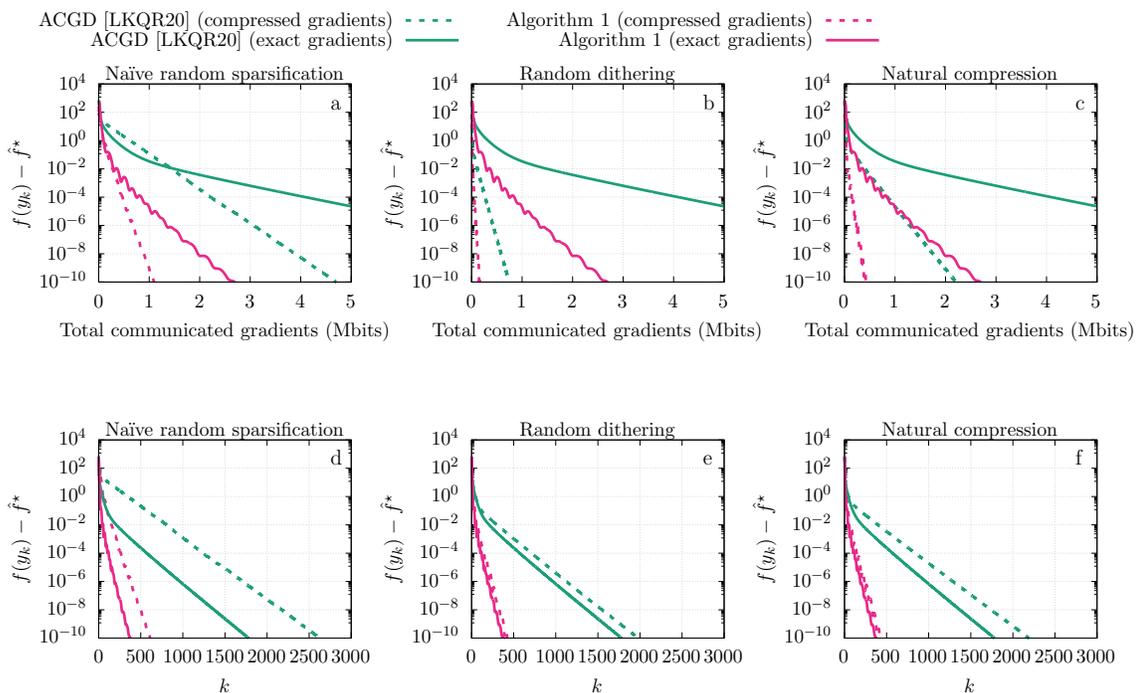}}} 
\caption{Federated logistic regression on the LIBSVM's mushroom dataset with $m = 10$ clients.}
\label{fig:logsitic_mushroom_appendix}
\end{figure*}

\begin{figure*}[h!]
\centering
{\resizebox{0.9\textwidth}{!}{\input{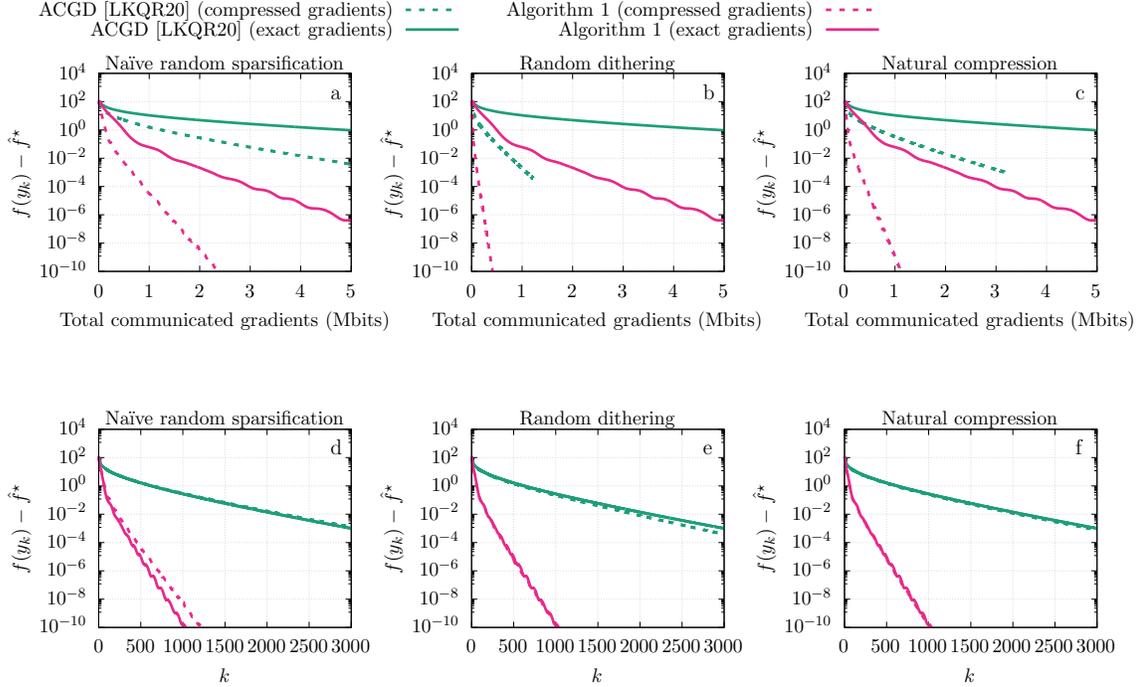}}} 
\caption{Federated logistic regression on the LIBSVM's a5a dataset with $m = 10$ clients.}
\label{fig:logsitic_a5a_appendix}
\end{figure*}

\subsection{Finite-sums}
\label{sec:app_saga}

We now consider the logistic regression problem 
\[
\underset{x \in C}{\text{minimize}} \quad  \sum_{l=1}^m  \log(1 + \exp(-b_l \langle a_l ,  x \rangle))
\] 
where $a_l \in \R^n$ and $b_l \in \{-1,1\}$ are the $l$'th data sample and label respectively. We run Algorithm \ref{al:smooth} with LIBSVM's mushroom dataset with $b = 100$ and parameter $\lambda$ small enough to make the second term in the right-hand-side of Eq.\ \eqref{eq:saga_appendix} nonpositive (computed numerically).  We compare the performance of the algorithm when the gradients are computed exactly (i.e., using all data samples), and when gradients are computed with standard (mini-batch) stochastic gradient descent. The batch size $b$ is fixed to $100$. Figure \ref{fig:saga} shows the simulation results, where we compare the training loss against the number of gradient computations. Observe that Algorithm \ref{al:smooth} outperforms both the mini-batch approach and when the gradient is computed exactly. Furthermore, observe that the convergence rate of the mini-batch gradient descent algorithm slow downs instead of retaining the linear rate of our algorithm.

\begin{figure*}[ht!]
\centering
{\resizebox{0.5\textwidth}{!}{\input{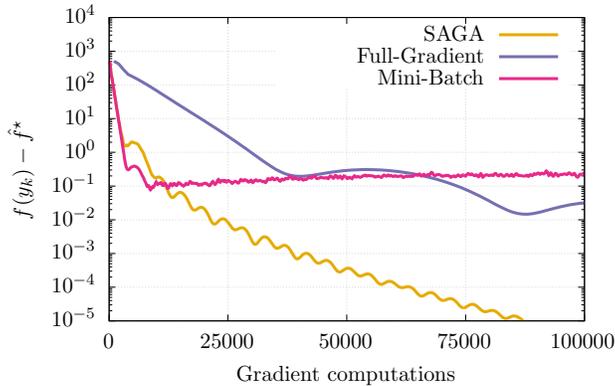}}} 
\caption{Finite-sum problem with the LIBSVM's mushroom dataset. The mini-batch size is equal to $100$.}
\label{fig:saga}
\end{figure*}

\end{document}